\DeclareFontFamily{U}{mathb}{\hyphenchar\font45}
\DeclareFontShape{U}{mathb}{m}{n}{
      <5> <6> <7> <8> <9> <10> gen * mathb
      <10.95> mathb10 <12> <14.4> <17.28> <20.74> <24.88> mathb12
}{}
\DeclareSymbolFont{mathb}{U}{mathb}{m}{n}
\DeclareMathSymbol{\llcurly}{3}{mathb}{"CE}
\DeclareMathSymbol{\ggcurly}{3}{mathb}{"CF}
\DeclareFontFamily{U}{matha}{\hyphenchar\font45}
\DeclareFontShape{U}{matha}{m}{n}{
      <5> <6> <7> <8> <9> <10> gen * matha
      <10.95> matha10 <12> <14.4> <17.28> <20.74> <24.88> matha12
      }{}
\DeclareSymbolFont{matha}{U}{matha}{m}{n}
\DeclareMathSymbol{\curlywedge} {2}{matha}{"4E}
\DeclareMathSymbol{\curlyvee} {2}{matha}{"4F}
\newtheorem{thm}{Theorem}[section]
\newtheorem*{thmmain}{Theorem \ref{thm.main}}
\newtheorem*{thmEll}{Theorem \ref{thm.SDAPEllentuck}}
\newtheorem{lem}[thm]{Lemma}
\newtheorem{cor}[thm]{Corollary}
\newtheorem*{thm7.3}{Theorem \ref{thm.MillikenSWP}}
\newtheorem{fact}[thm]{Fact}
\theoremstyle{remark}
\newtheorem{rem}[thm]{Remark}
\theoremstyle{definition}
\newtheorem{defn}[thm]{Definition}
\newtheorem{conv}[thm]{Convention}
\newtheorem{convention}[thm]{Convention}
\newtheorem{notation}[thm]{Notation}
\newtheorem{assumption}[thm]{Assumption}
\theoremstyle{remark}
\newcommand{\EEAP}{SDAP}
\newcommand{\SFAP}{SFAP}
\newcommand{\SFAPfull}{Substructure Free Amalgamation Property}
\newcommand{\SDAP}{\rm SDAP}
\newcommand{\SDAPfull}{Substructure Disjoint Amalgamation Property}
\newcommand{\SDAPplus}{SDAP$^+$}
\newcommand{\al}{\alpha}
\newcommand{\om}{\omega}
\newcommand{\sse}{\subseteq}
\newcommand{\contains}{\supseteq}
\newcommand{\forces}{\Vdash}
\newcommand{\AKD}{\mathcal{A}\bK(\bfD)}
\DeclareMathOperator{\tree}{tree}
\DeclareMathOperator{\splitpred}{sp}
\DeclareMathOperator{\ran}{ran}
\DeclareMathOperator{\dom}{dom}
\DeclareMathOperator{\depth}{depth}
\DeclareMathOperator{\Sim}{Sim}
\DeclareMathOperator{\Ext}{Ext}
\DeclareMathOperator{\type}{tp}
\newcommand{\LSDAP}{\rm LSDAP}
\newcommand{\re}{\restriction}
\newcommand{\bD}{\mathbb{D}}
\newcommand{\bE}{\mathbb{E}}
\newcommand{\bP}{\mathbb{P}}
\newcommand{\bQ}{\mathbb{Q}}
\newcommand{\bT}{\mathbb{T}}
\newcommand{\bS}{\mathbb{S}}
\newcommand{\bU}{\mathbb{U}}
\newcommand{\bA}{\mathbf{A}}
\newcommand{\bJ}{\mathbf{J}}
\newcommand{\bK}{\mathbf{K}}
\newcommand{\bM}{\mathbf{M}}
\newcommand{\bN}{\mathbf{N}}
\newcommand{\K}{\mathrm{K}}
\newcommand{\A}{\mathrm{A}}
\newcommand{\B}{\mathrm{B}}
\newcommand{\C}{\mathrm{C}}
\newcommand{\bfA}{\mathbf{A}}
\newcommand{\bfB}{\mathbf{B}}
\newcommand{\bfC}{\mathbf{C}}
\newcommand{\bfD}{\mathbf{D}}
\newcommand{\bfE}{\mathbf{E}}
\newcommand{\bfM}{\mathbf{M}}
\newcommand{\plussim}{\stackrel{+}{\sim}}
\newcommand{\ra}{\rightarrow}
\newcommand{\lgl}{\langle}
\newcommand{\rgl}{\rangle}
\newcommand{\rl}{\upharpoonleft}
\newcommand{\Erdos}{Erd{\H{o}}s}
\newcommand{\Fraisse}{Fra{\"{i}}ss{\'{e}}}
\newcommand{\noprint}[1]{\relax}
\title[Ramsey theory of
homogeneous  structures]{Infinite-dimensional Ramsey theory for homogeneous structures with SDAP$^+$}
\author{Natasha Dobrinen}
\address{Department of Mathematics\\
 University of Notre Dame \\
255 Hurley Bldg\\
Notre Dame, IN 46556   U.S.A.}
\email{ndobrine@nd.edu}
\thanks{This research was supported by  National Science Foundation Grant DMS-1901753}
\subjclass[2020]{05D10, 05C55,  05C15, 05C05,  03C15, 03E75}
\begin{document}

\maketitle
\tableofcontents

\begin{abstract}
We prove  that  
for any    homogeneous structure $\bK$
 in a language with finitely many relation symbols of arity at most two  satisfying \SDAPplus\ (or LSDAP$^+$),
there are spaces of subcopies of $\bK$, forming subspaces of the Baire space,
in which all Borel sets are Ramsey.
Structures satisfying SDAP$^+$ include the rationals, the Rado graph and more generally, unrestricted structures, and generic $k$-partite graphs,  the latter three types with or without an additional dense linear order.
As a corollary of the main theorem,
we obtain an analogue of the Nash-Williams Theorem which recovers
exact  big Ramsey degrees for these structures,
 answering a question raised by Todorcevic 
at  the 2019 Luminy Workshop on Set Theory.
Moreover,   for the rationals and 
 similar homogeneous structures  
our methods produce topological Ramsey spaces, thus
 satisfying
analogues of the Ellentuck theorem.  
\end{abstract}

%%%%%%%%%%%%%%%%%%%%%%%%%%%%%%%%%%%%%
%%%%%%%%%%%%%%%%%%%%%%%%%%%%%%%%%%%%%

\section{Introduction}\label{section.intro}

Ramsey theory
was initiated  by
 the following celebrated result.

\begin{thm}[Ramsey \cite{Ramsey30}]\label{thm.RamseyInfinite}
Given  a positive integer $k$,
suppose
that $[\mathbb{N}]^k$,
 the collection of  all $k$-element subsets of the natural numbers, 
is partitioned into finitely many  pieces.
Then there is an infinite  subset  $N\sse\mathbb{N}$ such that 
$[N]^k$
is  contained in one  piece of the partition.
\end{thm}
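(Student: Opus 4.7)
The plan is to prove the statement by induction on $k$, following the standard Erd\H{o}s--Szekeres style diagonal argument.

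For the base case $k=1$, a partition of $\mathbb{N}$ into finitely many pieces must, by the pigeonhole principle, contain an infinite class; that class is the desired $N$. For the inductive step, assume the result for $k-1$ and let $c:[\mathbb{N}]^k\to\{1,\dots,r\}$ be a finite coloring. I would construct, by recursion on $i\in\mathbb{N}$, a strictly increasing sequence $a_0<a_1<\cdots$ of natural numbers together with a decreasing chain of infinite sets $\mathbb{N}=N_0\supseteq N_1\supseteq N_2\supseteq\cdots$ and colors $d_0,d_1,\dots\in\{1,\dots,r\}$ satisfying the following conditions at each stage $i$: $a_i=\min N_i$, the set $N_{i+1}$ is an infinite subset of $N_i\setminus\{a_i\}$, and for every $F\in[N_{i+1}]^{k-1}$ the value $c(\{a_i\}\cup F)$ equals a single color $d_i$.

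To perform the recursive step, given $N_i$ and $a_i=\min N_i$, I would define an auxiliary coloring $c_i:[N_i\setminus\{a_i\}]^{k-1}\to\{1,\dots,r\}$ by
\[
c_i(F)=c(\{a_i\}\cup F).
\]
Applying the inductive hypothesis to $c_i$ on the infinite ground set $N_i\setminus\{a_i\}$ yields an infinite $c_i$-homogeneous subset, which I take to be $N_{i+1}$, with corresponding constant color $d_i$. This furnishes all the required data.

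Once the construction is complete, set $A=\{a_0,a_1,\dots\}$. For any $k$-element subset $\{a_{i_0},\dots,a_{i_{k-1}}\}\subseteq A$ with $i_0<i_1<\cdots<i_{k-1}$, the remaining points $a_{i_1},\dots,a_{i_{k-1}}$ all lie in $N_{i_0+1}$ by the nesting, so the color of the set equals $d_{i_0}$. Thus $c$ restricted to $[A]^k$ depends only on the minimum element's index. Since the map $i\mapsto d_i$ takes only finitely many values, the pigeonhole principle produces an infinite $I\subseteq\mathbb{N}$ on which $d_i$ is some constant $d$, and then $N=\{a_i:i\in I\}$ is infinite and satisfies $c\restriction[N]^k\equiv d$, completing the induction.

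The argument is almost entirely bookkeeping; the only substantive point is the diagonalization ensuring that the color of a $k$-set is determined by its minimum element, after which ordinary pigeonhole finishes the job. There is no serious obstacle, but care must be taken to invoke the inductive hypothesis on the correct infinite ground set $N_i\setminus\{a_i\}$ at each stage so that the chain of infinite sets never terminates.
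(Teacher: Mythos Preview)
Your argument is the standard induction-on-$k$ proof of the infinite Ramsey theorem and is correct as written. Note, however, that the paper does not supply its own proof of this statement: Theorem~\ref{thm.RamseyInfinite} is merely quoted as Ramsey's classical 1930 result, so there is no in-paper argument against which to compare your proposal.
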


Extensions of Ramsey's Theorem to colorings of 
infinite subsets of $\mathbb{N}$ have been proved, subject to constraints necessitated by the Axiom of Choice.
Considering the set of all infinite subsets of the natural numbers, denoted by $[\mathbb{N}]^{\mathbb{N}}$, as the Baire space with its metric topology,
a set $\mathcal{X}\sse [\mathbb{N}]^{\mathbb{N}}$ is called {\em Ramsey} if for each $M\in  [\mathbb{N}]^{\mathbb{N}}$, there is an $N\in [M]^{\mathbb{N}}$ such that either $[N]^{\mathbb{N}}\sse\mathcal{X}$ or else 
$[N]^{\mathbb{N}}\cap\mathcal{X}=\emptyset$.
From 1965 through 1974, a 
beautiful progression of results  was obtained
using topological properties  to guarantee that certain subsets of the Baire space are Ramsey. 
Nash-Williams  proved that clopen sets are Ramsey
in \cite{NashWilliams65};
Galvin and Prikry proved that Borel sets are Ramsey in \cite{Galvin/Prikry73}; and 
Silver extended this to analytic sets in \cite{Silver71}.
This line of work culminated in the topological characterization of Ramsey sets found by Ellentuck in \cite{Ellentuck74}  in terms of a topology refining the metric topology on $[\mathbb{N}]^{\mathbb{N}}$, now referred to as the Ellentuck topology.

This paper is focused on developing analogues of the Galvin--Prikry and Ellentuck Theorems for topological spaces of subcopies of a given \Fraisse\ structure.
This line of inquiry was highlighted in
 Section 11 of  
\cite{Kechris/Pestov/Todorcevic05}, 
by Kechris, Pestov, and Todorcevic.
Specifically, they asked  for the development of infinite-dimensional Ramsey theory of the form $\bK\ra_*(\bK)^{\bK}_{\ell,t}$,
where $\bK$ is the \Fraisse\ limit of some \Fraisse\ class and $\ra_*$ means that the partitions of ${\bK\choose\bK}$ are required to be definable in some sense.

Identifying the universe of $\bK$ with $\mathbb{N}$,
 one can view the set of all  subcopies of $\bK$ as the subspace of the Baire space corresponding to the set of universes of subcopies of $\bK$.
In \cite{DobrinenRado19},
the author proved an infinite-dimensional Ramsey theorem for certain topological spaces of subcopies of the Rado graph. 
At the 2019 Luminy Workshop on Set Theory,
Todorcevic  asked the author  whether the infinite-dimensional Ramsey theorem would directly recover the exact big Ramsey degrees of the Rado graph.
The  approach in \cite{DobrinenRado19}
directly recovers exact big Ramsey degrees for vertex, edge, and non-edge colorings, but does not directly recover exact big Ramsey degrees for most graphs  with three or more vertices. 
Thus,   the first motivation for this paper was to develop infinite-dimensional Ramsey theory for the Rado graph which would directly recover known exact big Ramsey degrees from a  Nash-Williams style corollary. 
This is done in Corollary \ref{cor.NW}.

The second motivation for this paper was to develop infinite-dimensional Ramsey theory for a large collection of \Fraisse\ structures  for which exact big Ramsey degrees are already known, thus making progress on the question of Kechris, Pestov, and Todorcevic discussed above. 
The Main Theorem of this paper, Theorem \ref{thm.main},
develops infinite-dimensional Ramsey theory for all \Fraisse\ structures with finitely many relations of arity at most two satisfying  amalgamation properties called SDAP$^+$ and  LSDAP$^+$, developed by Coulson, Dobrinen, and Patel  in \cite{CDP1} and \cite{CDP2} to prove exact big Ramsey degrees with a simple characterization in terms of diagonal antichains 
in coding trees of $1$-types (see Theorem \ref{thm.CDP}).
The class of homogeneous structures satisfying 
SDAP$^+$ includes the rationals and  the rationals with an equivalence relation with finitely many dense equivalence classes, as well as
the Rado graph, 
 generic $n$-partite graphs, the generic tournament and digraph,  more generally unrestricted structures with finitely many binary relations, as well as 
versions of these with an additional linear order forming a dense linear order on the \Fraisse\ limit.
The class of LSDAP$^+$ structures includes the rationals with a convex equivalence relation and a natural 
hiearchy of such structures with successively coarser convex equivalence relations.
(See Section 5 of \cite{CDP2} for a catalogue of \SDAP$^+$ and   \LSDAP$^+$ structures and their big Ramsey degree results.)

The infinite-dimensional Ramsey theorem in this paper recovers the  big Ramsey degrees
proved in \cite{CDP2}
 for SDAP$^+$ and LSDAP$^+$  structures   in the following manner:
For each diagonal antichain $A$ representing a  finite substructure $\bfA$ of $\bK$,
Corollary \ref{cor.NW} shows that 
given any finite coloring of the copies of $\bfA$ in $\bK$,
 there is a subcopy of $\bK$ in which all copies of $\bfA$ represented by the similarity type of $A$ have the same color.
Note  that 
 the lower bound argument in \cite{CDP2}  showing that each diagonal antichain representing $\bfA$ persists in each subcopy of $\bK$  does  not follow from   the infinite-dimensional Ramsey theory in this paper. 
Rather, given that result, 
we  can conclude that Corollary \ref{cor.NW}  recovers  exact big Ramsey degrees.

We remark  on the necessity (in one form or another) of  diagonal coding antichains  and similarity types for   infinite dimensional Ramsey theory.
In the context of Countable Choice, one can  well-order the vertices of a countably infinite structure $\bK$ in order type $\om$; that is, we may assume that the  universe of $\bK$ is $\bN$.
Since our language is countable, we can linearly order its symbols.
Taken together, these automatically induce a coding tree of $1$-types representing $\bK$.
Big Ramsey degrees of \Fraisse\ structures  present   constraints for the development of infinite-dimensional  structural Ramsey theory.
A  \Fraisse\ limit  $\bK$
of a \Fraisse\ class $\mathcal{K}$ is said to have
 {\em finite big Ramsey degrees} if  for each $\bfA\in \mathcal{K}$, there is some positive integer $t$ such that  for each $\ell\ge 2$,
\begin{equation}
 \bK\ra (\bK)^{\bfA}_{\ell,t}.
 \end{equation}
 This  is the structural analogue of the infinite Ramsey Theorem \ref{thm.RamseyInfinite}.
  When such a $t$ exists for a given $\bfA$, using the conventions in  \cite{Kechris/Pestov/Todorcevic05},  $T(\bfA,\mathcal{K})$ denotes the minimal such $t$ and is called the  {\em big Ramsey degree} of $\bfA$ in $\bK$.
In all known cases, the big Ramsey degree $T(\bfA,\bK)$ corresponds to a  canonical partition
  of  ${\bK\choose\bA}$ (the copies of $\bA$ in $\bK$)
   into $T(\bA,\bK)$ many pieces
   each of which is
{\em persistent}, meaning that for any member $\bM$ of
  ${\bK\choose\bK}$, the set
   ${\bM\choose\bA}$ meets every piece in the partition.
Through the view of coding trees of $1$-types, canonical partitions for \SDAP$^+$ and \LSDAP$^+$ structures are characterized by finite  diagonal coding  antichains 
(see Theorem \ref{thm.CDP} and preceding definitions).
It is useful to  think of finite  big Ramsey degrees as  a structural Ramsey  theorem  
where one finds an expanded structure  which guarantees one color for all copies of $\bA$ in that expansion. 
Big Ramsey degrees of size two or more present 
a fundamental constraint to the development of infinite-dimensional structural  Ramsey theory:
any infinite-dimensional theorem must   restrict to 
a subspace of 
${\bK\choose\bK}$ where all members  have the same similarity type in the coding tree of $1$-types.

Given any \Fraisse\ structure  $\bK$ satisfying \SDAP$^+$ or \LSDAP$^+$ with universe $\mathbb{N}$, and given a subcopy $\bM$ of $\bK$,
let $\bK(\bM)$ denote the collection of all 
$\bN
\in {\bM\choose\bK}$ with the same (induced)  similarity type as $\bM$ has as an enumerated substructure of $\bK$.
(This space will be precisely defined  in Section \ref{sec.MainTheorem}.)
Note that $\bK(\bM)$ is a topological space,  identified with  the subspace of the Baire space consisting of the universes of all structures in $\bK(\bM)$.
 The following is the main theorem of the paper.

\begin{thmmain}
Let $\bK$ be an enumerated \Fraisse\ structure satisfying {\SDAP$^+$} (or {\LSDAP$^+$})
with finitely many relations of arity at most two,
and let $\bfD$ be a subcopy of $\bK$ such that the subtree $\bD$ of the coding tree of $1$-types over $\bK$  induced by the vertices in $\bfD$  is a good diagonal antichain. 
Then each Borel subset of $\bK(\bfD)$ is completely Ramsey, and hence Ramsey.
\end{thmmain}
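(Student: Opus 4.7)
The plan is to realize $\bK(\bfD)$, together with a natural finitization and partial order, as a topological Ramsey space in the sense of Todorcevic, after which the abstract Ellentuck theorem will deliver the conclusion immediately. I would first fix the basic data: for $\bfN \in \bK(\bfD)$, let $r_n(\bfN)$ denote the restriction of the induced coding subtree of $\bfN$ to its first $n$ levels, together with the finite substructure coded there; and set $\bfN \le \bfM$ iff $\bfN \in \bK(\bfM)$, that is, $\bfN$ is a subcopy of $\bfM$ whose induced coding subtree is a good diagonal antichain of the same similarity type as $\bD$. The basic open sets will be the Ellentuck-style neighborhoods $[a,\bfM] = \{\bfN \le \bfM : a\text{ is a finite approximation to }\bfN\}$.

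The main technical content is then the verification of Todorcevic's axioms (A.1)--(A.4). Axiom (A.1), metrization of the sequencing $r_n$, and axiom (A.2), finitization compatibility, are essentially formal: both follow from the observation that the coding tree of $1$-types is finitely branching and from the identification of subcopies of $\bK$ with points in Baire space via their enumerations. Axiom (A.3), extendibility and amalgamation of finite approximations inside any given $\bfM \le \bfD$, is obtained by iterating \SDAP$^+$ (respectively \LSDAP$^+$): once a finite approximation has been chosen, the disjoint amalgamation property allows one to continue extending inside $\bfM$ while preserving the similarity type prescribed by $\bD$.

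The main obstacle will be axiom (A.4), the pigeonhole principle: for any finite approximation $a$, any $\bfM \le \bfD$ extending $a$, and any finite coloring of the $1$-step extensions of $a$ appearing in $\bfM$, there must exist $\bfN \le \bfM$ also extending $a$ on which the coloring is constant. Combinatorially this reduces to a single-vertex Ramsey theorem in the coding tree of $1$-types, where one must control simultaneously the new vertex's quantifier-free type over the previously placed vertices and the passing numbers recorded at each level. Here I would exploit Theorem~\ref{thm.CDP} of Coulson--Dobrinen--Patel, which asserts that under \SDAP$^+$ the good diagonal antichain similarity type is the canonical invariant for big Ramsey degrees. The argument then proceeds by a level-by-level fusion inside $\bfM$: at each stage pigeonhole the color of the next single-vertex extension, use \SDAP$^+$ to realize the chosen monochromatic extension inside a subcopy of $\bfM$, and invoke the \IPOC\ to keep passing numbers consistent across stages so that the limiting object is again a good diagonal antichain of the correct similarity type. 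The \LSDAP$^+$ case is handled by the same strategy, respecting the convex structure of the local amalgamation at each level.

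Once (A.1)--(A.4) are verified, the abstract Ellentuck theorem yields that every Borel subset of $\bK(\bfD)$ is completely Ramsey in the induced Ellentuck-style topology, and a fortiori Ramsey in the original metric topology. Thus the entire force of the theorem is concentrated in the pigeonhole step (A.4); the transition from open to Borel sets and from completely Ramsey to Ramsey is absorbed into Todorcevic's axiomatic framework.
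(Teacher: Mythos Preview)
Your plan has a genuine gap: Todorcevic's amalgamation axiom \textbf{A.3}(2) does \emph{not} hold for $\bK(\bfD)$ (equivalently for $\mathcal{D}(\bD)$) in general. The paper states this explicitly: axiom \textbf{A.3}(2) ``fails for the Rado graph and similar structures if one works with good diagonal antichains'' (see the proof of Theorem~\ref{thm.SDAPEllentuck} and the opening paragraph of Section~\ref{sec.APP}). The difficulty is that for a finite approximation $a$ sitting at depth $d$ inside some $M$, one cannot in general find $N'\in[r_d(M),M]$ with $[a,N']\subseteq[a,N]$, because the width of the coding tree above $a$ need not be captured by the width of $M$ at that level. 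Your one-line claim that \textbf{A.3} follows by ``iterating \SDAP$^+$'' does not address this, and in fact no amount of amalgamation will repair it for structures like the Rado graph. Consequently the abstract Ellentuck theorem is unavailable, and the paper's Theorem~\ref{thm.SDAPEllentuck} isolates precisely the narrow class (rationals, $\bQ_n$, $\bQ_{\bQ}$, etc.) for which your route does work.

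The paper's substitute is to prove a strengthened pigeonhole principle (Theorem~\ref{thm.matrixHL}, the Extended Pigeonhole Principle) for sets of the form $[B,M]^*$ with $B\in\widehat{\mathcal{AD}}$, which preserves width information in the ambient antichain and thereby compensates for the failure of \textbf{A.3}(2). This is established by a forcing argument together with the \Erdos--Rado theorem, not by invoking Theorem~\ref{thm.CDP}; indeed Theorem~\ref{thm.CDP} is a \emph{consequence} of this machinery (via Corollary~\ref{cor.NW}), so your appeal to it for \textbf{A.4} is circular. With the extended pigeonhole in hand, the paper runs a direct Galvin--Prikry style argument (asymmetric combinatorial forcing in Theorem~\ref{thm.GalvinNW}, then closure under complements and countable unions in Lemmas~\ref{lem.complements}--\ref{lem.ctblU}) to obtain Theorem~\ref{thm.best} for $\mathcal{D}(\bD)$, and finally transfers the conclusion to $\bK(\bfD)$ via the homeomorphism $\theta$ of Lemmas~\ref{lem.thetacts} and~\ref{lem.thetametricopenpres}.
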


From the methods used in this paper, we immediately obtain the following stronger Ellentuck analogue for certain structures.

\begin{thmEll}
Let $\bK$ be  any one of the following structures 
with universe $\mathbb{N}$:
The rationals, $\bQ_n$, $\bQ_{\bQ}$, and or any \Fraisse\  structure
satisfying {\SDAP$^+$}  (or {\LSDAP$^+$}) for which the coding tree of $1$-types  $\bU(\bK)$ 
has the property that on any given level of 
  $\bU(\bK)$, only the coding node splits.
Then the spaces $\mathcal{D}(\bD)$, where $\bD$ is a diagonal coding antichain for $\bK$, are actually topological Ramsey spaces.
\end{thmEll}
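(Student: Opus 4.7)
The plan is to verify the four axioms (A.1 through A.4) for a topological Ramsey space in the sense of Todorcevic, for the triple $(\mathcal{D}(\bD), \le, r)$. Here $\le$ is the natural substructure relation on subcopies of $\bK$ realizing the similarity type of $\bD$, and $r_n(\bfC)$ is the finite diagonal coding antichain obtained by truncating the coding subtree of $\bU(\bK)$ associated with $\bfC$ to its first $n$ coding levels. Once A.1--A.4 are checked, Todorcevic's Abstract Ellentuck Theorem will yield that every subset of $\mathcal{D}(\bD)$ with the property of Baire in the Ellentuck-style topology generated by the basic neighborhoods $[a,\bfC]$ is Ramsey, so $\mathcal{D}(\bD)$ is a topological Ramsey space.

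First I would dispatch the two bookkeeping axioms. Axiom A.1 (the basic tree structure of finite approximations: $r_0(\bfC) = \emptyset$, distinct members of $\mathcal{D}(\bD)$ yield distinct infinite sequences of approximations, and $r_m(\bfC)$ end-extends $r_n(\bfC)$ for $n < m$) follows directly from the level-by-level organization of a diagonal coding antichain realizing the similarity type of $\bD$ inside $\bU(\bK)$. Axiom A.2 (finitization together with a natural length function on finite approximations) holds because each approximation is a finite diagonal antichain of the prescribed similarity type, yielding a countable collection with a canonical depth function inherited from $\bU(\bK)$.

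The amalgamation axiom A.3 should follow from {\SDAP$^+$} (respectively {\LSDAP$^+$}). Given a finite approximation $a = r_n(\bfC)$ together with any further refinement $\bfB \le \bfC$, one must exhibit some $\bfB' \le \bfB$ with $r_n(\bfB') = a$. Since $\bfB$ already realizes the similarity type of $\bD$ and the amalgamation conditions defining {\SDAP$^+$} allow one to extend any finite diagonal coding antichain inside any subcopy of $\bK$, this is exactly the one-step extension argument that appears in the construction phase of the proof of the Main Theorem.

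The main obstacle is axiom A.4, the pigeonhole principle: given $a = r_n(\bfC)$ and a partition of the depth-$(n+1)$ extensions of $a$ inside $\bfC$ into finitely many pieces, one must find $\bfB \le \bfC$ with $r_n(\bfB) = a$ such that $r_{n+1}[a,\bfB]$ is contained in a single piece. Here the hypothesis that \emph{only the coding node splits} on each level of $\bU(\bK)$ is essential: it reduces each one-step extension decision to a single choice, so that pigeonhole at one level does not interact uncontrollably with pigeonhole at other levels, exactly as for the Ellentuck space $[\mathbb{N}]^{\mathbb{N}}$. Under this hypothesis, A.4 is precisely the one-dimensional case of the fusion-plus-Halpern--\Lauchli\ style argument underlying the proof of the Main Theorem, applied to a clopen coloring of basic Ellentuck neighborhoods. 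Without the ``only coding node splits'' assumption, non-coding splitting nodes would introduce independent branching choices that prevent such a clean pigeonhole, which is exactly why the general statement of the Main Theorem is restricted to Borel sets rather than giving the full Ellentuck form. With A.1--A.4 verified, Todorcevic's Abstract Ellentuck Theorem concludes the proof.
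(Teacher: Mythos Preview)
Your overall strategy—verify Todorcevic's axioms \textbf{A.1}–\textbf{A.4} and invoke the Abstract Ellentuck Theorem—matches the paper's. However, you have misidentified which axiom is the obstacle, and this matters because the justifications you offer for \textbf{A.3} and \textbf{A.4} are each aimed at the wrong target.

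Axiom \textbf{A.4} (pigeonhole) holds for \emph{all} SDAP$^+$ structures treated in this paper, not just those satisfying the hypothesis of this theorem: it is exactly Theorem~\ref{thm.matrixHL} (the Extended Pigeonhole Principle), whose forcing proof nowhere uses the ``only the coding node splits'' assumption. Your account of why \textbf{A.4} should require that assumption (``it reduces each one-step extension decision to a single choice'') is therefore not the operative mechanism; the pigeonhole already works in the presence of many non-coding splitting nodes.

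The axiom that genuinely fails in general—and for which the special hypothesis is essential—is \textbf{A.3}(2): from $A\le B$ and $[a,A]\ne\emptyset$, produce $A'\in[\depth_B(a),B]$ with $\emptyset\ne[a,A']\subseteq[a,A]$. For the Rado graph and similar structures, the presence of non-coding splitting nodes means that $A$ may have branched below $\depth_B(a)$ in a pattern incompatible with $B$'s branching there, so no such $A'$ need exist. This failure is precisely why the paper was forced to introduce the starred sets $[B,M]^*$ and the notion CR$^*$ in Section~\ref{sec.MainThm} instead of running the usual Ellentuck argument directly. When only coding nodes split in $\bU(\bK)$, the branching structure of any $M\in\mathcal{D}(\bD)$ below a given depth is rigid enough that \textbf{A.3}(2) goes through. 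Your sketch of \textbf{A.3} describes at best \textbf{A.3}(1) (which does hold for all SDAP$^+$ structures) and does not engage with \textbf{A.3}(2) at all.

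A minor point: $r_k(M)$ in this paper truncates at the first $k$ \emph{critical} nodes (splitting or coding), not the first $k$ coding levels; this does not affect the argument but your description should be adjusted.
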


The paper is orginized as follows:
Background from  \cite{CDP1}  and \cite{CDP2} is presented in Section \ref{sec.background}.
Section \ref{sec.BaireSpaceDCA}
 defines the spaces of diagonal coding antichains representing subcopies of a given homogeneous structure $\bK$.
The pretext for our notation and set-up is  Chapter 5  of Todorcevic's 
book \cite{TodorcevicBK10} on topological Ramsey spaces.
Theorem \ref{thm.matrixHL} proves an
Extended Pigeonhole Principle, a strong version of Todorcevic's Axiom \bf A.4\rm.
Theorem \ref{thm.best} proves a Galvin--Prikry style theorem for spaces of diagonal coding antichains with a metric topology.
 Theorem  \ref{thm.main} in Section \ref{sec.MainTheorem}
interprets this back into natural subspaces of the Baire space, proving the main theorem of this paper.
Corollary \ref{cor.NW}  answers a question of Todorcevic, showing that the Nash-Williams style corollary of our main theorem recovers big Ramsey degrees.
Theorem \ref{thm.SDAPEllentuck} proves Ellentuck analogues for  structures which have a certain amount of rigidity.

%Section \ref shows that in certain cases, we show that the spaces 

%%%%%%%%%%%%%%%%%%%%%%%%%%%%%%%%%%%%%
%%%%%%%%%%%%%%%%%%%%%%%%%%%%%%%%%%%%%

\section{Background}\label{sec.background}

This section reviews  \Fraisse\ theory, amalgamation properties, and coding tree notions  from 
   \cite{CDP1}.

%%%%%%%%%%%%%%%%%%%
%%%%%%%%%%%%%%%%%%%

\subsection{\Fraisse\ theory and  substructure amalgamation properties}\label{subsec.background}
In this paper,
all languages 
$\mathcal{L}$  will consist of finitely many relation symbols $\{R_i:i< n\}$, with the arity $k_i$  of $R_i$ being either $1$ or $2$.
An {\em $\mathcal{L}$-structure}   is an object $\bfA=\lgl \mathrm{A}, R^{\bfA}_0,\dots, R^{\bfA}_{n-1}\rgl$, where 
 the {\em universe} of  $\bfA$, denoted by 
$\mathrm{A}$, is non-empty and 
$ R^{\bfA}_i\sse \A^{k_i}$.
Finite structures will  be denoted by $\bfA,\bfB, \bfC,\dots$, 
and  their universes  by $\A,\B,\C,\dots$.  
Infinite structures will typically be denoted by $\bK,\bM,\bN,\dots$, and their universes
by $\K,\mathrm{M},\mathrm{N}, \dots$. 
The elements of the universe of a structure will be called {\em vertices}.

With no loss of generality, 
we make the following assumptions:
(a) $\mathcal{L}$ has at least one unary relation symbol.
(b)
Any  structure $\bfA\in\mathcal{K}$ has the property  that  each vertex $v\in \mathrm{A}$ satisfies $R^{\bfA}(v)$ for exactly one unary relation symbol  $R$ in $\mathcal{L}$.
(c)
For each unary relation symbol $R\in\mathcal{L}$,
there is some $\bfA\in\mathcal{K}$ and a 
vertex $v\in \mathrm{A}$  such that  $R^{\bfA}(v)$.
We say that the unary relations  are {\em non-trivial} exactly when  
$\mathcal{L}$ 
 has two or more unary relation symbols.

For $\mathcal{L}$-structures $\bfA$ and $\bfB$,
an {\em embedding}  $e:\bfA\ra \bfB$  is an injection
on their universes $e:\mathrm{A}\ra \mathrm{B}$ with the property that
for all $i<n$, 
\begin{equation*}
R_i^{\bfA}(a_1,\dots,a_{n_i})\longleftrightarrow R_i^{\bfB}(e(a_1),\dots,e(a_{n_i}))
\end{equation*}
The $e$-image of $\bfA$ is  called  a {\em copy} of $\bfA$ in $\bfB$.
If $e$ is the identity map, then
 $\bfA$ is a {\em substructure} of $\bfB$.
If $e$  is onto $\bfB$ then $e$ is an 
 {\em isomorphism} and we say that $\bfA$ and $\bfB$ are {\em isomorphic}.
We write $\bfA\le\bfB$  exactly when 
 there is an embedding of  $\bfA$  into $\bfB$, and
 $\bfA\cong\bfB$  exactly when   there is an isomorphism from $\bfA$ onto $\bfB$.

A class $\mathcal{K}$ of finite structures
 is called a {\em \Fraisse\ class}  if it is
nonempty,  closed under isomorphisms,
 hereditary, and satisfies the joint embedding and amalgamation properties.
$\mathcal{K}$ is  {\em hereditary} if whenever $\bfB\in\mathcal{K}$ and  $\bfA\le\bfB$, then also $\bfA\in\mathcal{K}$.
 $\mathcal{K}$ satisfies the {\em joint embedding property} if for any $\bfA,\bfB\in\mathcal{K}$,
there is a $\bfC\in\mathcal{K}$ such that $\bfA\le\bfC$ and $\bfB\le\bfC$. 
 $\mathcal{K}$ satisfies the {\em amalgamation property} if for any embeddings
$f:\bfA\ra\bfB$ and $g:\bfA\ra\bfC$, with $\bfA,\bfB,\bfC\in\mathcal{K}$,
there is a $\bfD\in\mathcal{K}$ and  there are embeddings $r:\bfB\ra\bfD$ and $s:\bfC\ra\bfD$ such that
$r\circ f = s\circ g$.
 Note that in a finite relational language, there are only countably many finite structures up to isomorphism.

A \Fraisse\ class $\mathcal{K}$ satisfies the 
{\em disjoint amalgamation property} (DAP)  if given $\bfA,\bfB,\bfC\in\mathcal{K}$ and  embeddings $e:\bfA\ra \bfB$ and $f:\bfA\ra \bfC$,
there is some $\bfD\in\mathcal{K}$ and embeddings $e':\bfB\ra\bfD$ and $f':\bfC\ra\bfD$ such that $e'\circ e=f'\circ f$, and $e'[B]\cap f'[C]=e'\circ e[A]=f'\circ f[A]$.
The DAP  is often  called  the {\em strong amalgamation property}  and is
 equivalent  to the {\em strong embedding property},
 which
says that
 for any $\bfA\in\mathcal{K}$, $v\in\mathrm{A}$, and embedding $\varphi:(\bfA-v) \ra\bK$,
there are infinitely many different extensions of $\varphi$ to embeddings of $\bfA$ into $\bK$.  (See \cite{CameronBK90}.)
We say that  $\mathcal{K}$ satisfies the {\em free amalgamation property} (FAP) if it satisfies the DAP and moreover, $\bfD$ can be chosen so that $\bfD$ has no additional relations other than those inherited from $\bfB$ and $\bfC$.

The following  amalgamation properties,  SFAP and SDAP, were first formulated in  \cite{CDP1}.

\begin{defn}[\cite{CDP1}]\label{defn.SFAP}
A \Fraisse\ class $\mathcal{K}$ has the
{\em  \SFAPfull\ (\SFAP)} if $\mathcal{K}$ has free amalgamation
and given  $\bfA,\bfB,\bfC,\bfD\in\mathcal{K}$, the following holds:
Suppose
\begin{enumerate}
\item[(1)]
$\bfA$  is a substructure of $\bfC$, where
 $\bfC$ extends  $\bfA$ by two vertices,
say $\mathrm{C}\setminus\mathrm{A}=\{v,w\}$;

\item[(2)]
 $\bfA$  is a substructure of $\bfB$ and
 $\sigma$ and $\tau$  are
 $1$-types over $\bfB$  with   $\sigma\re\bfA=\type(v/\bfA)$ and $\tau\re\bfA=\type(w/\bfA)$; and
\item[(3)]
$\bfB$ is a substructure of $\bfD$ which extends
 $\bfB$ by one vertex, say $v'$, such that $\type(v'/\bfB)=\sigma$.

\end{enumerate}
  Then there is
an   $\bfE\in\mathcal{K}$ extending  $\bfD$ by one vertex, say $w'$, such that
  $\type(w'/\bfB)=\tau$, $\bfE\re (
  \mathrm{A}\cup\{v',w'\})\cong \bfC$,
  and $\bfE$ adds no other relations over $\mathrm{D}$.
\end{defn}

\SFAP\ 
can be stated 
in terms of  embeddings, but 
its formulation  via   substructures  and $1$-types is more closely aligned with 
 its uses in  the forcing proof in Section \ref{sec.APP}.
In \cite{CDP1} it was remarked that 
\SFAP\ is equivalent to free amalgamation along with
 a model-theoretic property that may be termed {\em free 3-amalgamation},
which is 
 a special case of the {\em disjoint 3-amalgamation} property defined in \cite{Kruckman19}.
Kruckman showed  in \cite{Kruckman19} that if the age of a \Fraisse\ limit $\bK$ has disjoint amalgamation and disjoint 3-amalgamation, then $\bK$ exhibits a model-theoretic tameness property called  \emph{simplicity}.

The next amalgamation property extends \SFAP\ to disjoint amalgamation classes.

\begin{defn}[\cite{CDP1}]\label{defn.SDAP}
A \Fraisse\ class $\mathcal{K}$ has the
{\em  \SDAPfull}  (\SDAP)
 if $\mathcal{K}$
has disjoint amalgamation,
and the following holds:
Given   $\bfA, \bfC\in\mathcal{K}$, suppose that
 $\bfA$ is a substructure of $\bfC$, where   $\bfC$ extends  $\bfA$ by two vertices, say $v$ and $w$.
Then there exist  $\bfA',\bfC'\in\mathcal{K}$, where
$\bfA'$
contains a copy of $\bfA$ as a substructure
and
$\bfC'$ is a disjoint amalgamation of $\bfA'$ and $\bfC$ over $\bfA$, such that
letting   $v',w'$ denote the two vertices in
 $\mathrm{C}'\setminus \mathrm{A}'$ and
assuming (1) and (2), the conclusion holds:
 \begin{enumerate}
 \item[(1)]
Suppose
$\bfB\in\mathcal{K}$  is any structure
 containing $\bfA'$ as a substructure,
and let
 $\sigma$ and $\tau$  be
  $1$-types over $\bfB$  satisfying    $\sigma\re\bfA'=\type(v'/\bfA')$ and $\tau\re\bfA'=\type(w'/\bfA')$.
\item[(2)]
Suppose
$\bfD\in \mathcal{K}$  extends  $\bfB$
by one vertex, say $v''$, such that $\type(v''/\bfB)=\sigma$.
\end{enumerate}
Then
  there is
an  $\bfE\in\mathcal{K}$ extending   $\bfD$ by one vertex, say $w''$, such that
  $\type(w''/\bfB)=\tau$ and  $\bfE\re (
  \mathrm{A}\cup\{v'',w''\})\cong \bfC$.
\end{defn}

It is straightforward to see that 
\SDAP\ implies \SFAP\ (let $\bfA'=\bfA$ and $\bfC'=\bfC$),
and that 
 \SFAP\ and \SDAP\ are each
preserved  under free superposition.
These two amalgamation properties  were formulated by extracting   properties 
of  \Fraisse\ classes for which  the forcing partial order in   Theorem \ref{thm.matrixHL}
can just  be  extension,  
from which  simple characterizations of their big Ramsey degrees in \cite{CDP2} follow.

%%%%%%%%%%%%%%%%%%%%
%%%%%%%%%%%%%%%%%%%%

\subsection{Coding trees of $1$-types}\label{subsec.ct}

This subsection reproduces  notions from \cite{CDP1} which will be used throughout this  paper. 
Given a \Fraisse\ class $\mathcal{K}$,  an {\em enumerated \Fraisse\ structure} is a \Fraisse\ limit  $\bK$
of $\mathcal{K}$ with  universe $\mathbb{N}=\{ 0,1,2,\dots\}$.
For the sake of clarity, we  use $v_n$  (rather than $n$) to denote the  $n$-th vertex of $\bK$. 
We   let $\bK_n$ denote $\bK\re \{v_i:i<n\}$, the restriction of $\bK$ to its first $n$ vertices.
All types will be quantifier-free 1-types, with variable $x$, over some finite initial segment of $\bK$; 
the notation ``tp'' denotes a complete quantifer-free 1-type.
For $n \ge 1$, a type over $\bK_n$ must contain the formula
$\neg (x = v_i ) $ for each $i < n$.
Given a type $s$ over $\bK_n$, for any $i < n$, $s \re \bK_i$ denotes the restriction of $s$ to parameters from $\bK_i$.

\begin{defn}[The  Coding  Tree of $1$-Types, \cite{CDP1}]\label{defn.treecodeK}
The {\em coding  tree of $1$-types}
$\bS(\bK)$
for an enumerated \Fraisse\ structure $\bK$
 is the set of all complete
  $1$-types over initial segments of $\bK$
along with a function $c:\mathbb{N}\ra \bS(\bK)$ such that
$c(n)$ is the
$1$-type
of $v_n$
over $\bK_n$.
The tree-ordering is  inclusion.
\end{defn}

We will often  write $\bS$ and  $c_n$ in place of $\bS(\bK)$ and  $c(n)$, respectively.
Let  $\bS(n)$
denote
 the collection
 of all   $1$-types  $\type(v_i/\bK_n)$, where
 $i\ge n$, and note that
 $c_n$ is a node in $\bS(n)$.
The set  $\bS(0)$ consists of
 the
 $1$-types over the empty structure $\bK_0$.
A {\em level set} is a subset $X\sse\bS(n)$  for some $n$.
For $s\in \bS(n)$, the immediate successors of $s$   are  exactly those $t\in \bS(n+1)$ such that $s\sse t$.
Each  set $\bS(n)$  is finite, since the language
$\mathcal{L}$  has  finitely many finitary relation symbols.

A node $s\in\bS(n)$ has {\em length}  $n+1$, denoted  by $|s|$, and uniquely induces
  the sequence
$\lgl s(i):i< |s|\rgl$ defined as follows:
  $s(0)$  denotes the set of formulas in $s$ involving
no parameters, and 
for  $1\le i< |s|$,
 $s(i)$ denotes  the set of those formulas in $s\re \bK_i$
in which $v_{i-1}$ appears as the parameter.
For $j< |s|$, note that $\bigcup_{i\le j}s(i)$  is the  predecessor of $s$ in 
$\bS(j)$.
For $\ell\le |s|$,
we let  $s \re \ell$  denote $\bigcup_{i<\ell}s(i)$.
Given $s,t\in \bS$,
$s\wedge t$ denotes 
the {\em meet} of $s$ and $t$,  which is 
$s \re \bK_m$ 
where $m$ is maximal such that
 $s \re \bK_m=t \re \bK_m$.

Let $\Gamma$ denote $\bS(0)$,  the set of  complete $1$-types over the empty set that are realized in $\bK$. For $\gamma\in\Gamma$, we write ``$\gamma(v_n)$ holds in $\bK$'' when $\gamma$ is the 1-type of $v_n$ over the empty set.
The following modification of
 Definition \ref{defn.treecodeK}  of $\bS(\bK)$  is useful  for \Fraisse\ classes which have both non-trivial unary relations and a linear order.

\begin{defn}[The Unary-Colored  Coding Tree of $1$-Types,
\cite{CDP1}]\label{defn.ctU}
Let $\mathcal{K}$ be a \Fraisse\ class in language 
$\mathcal{L}$ and $\bK$ be an enumerated \Fraisse\ structure for $\mathcal{K}$.
For $n\in\mathbb{N}$,
let $c_n$ denote the $1$-type
of $v_n$
over $\bK_n$
(exactly as in the definition of $\bS(\bK)$).
Let $\mathcal{L}^-$ denote
the collection of all relation symbols in $\mathcal{L}$ of arity greater than one,
and let $\bK^-$ denote the reduct of $\bK$ to $\mathcal{L}^-$
and $\bK_n^-$ the reduct of $\bK_n$ to $\mathcal{L}^-$.

The {\em $n$-th level}, denoted  $\bU(n)$,  is 
the collection
of all $1$-types
$s$ over $\bK^-_n$
in the language $\mathcal{L}^-$
such that
for some $i\ge n$,
$v_i$ satisfies $s$.
Define $\bU=\bU(\bK)$
 to be
$\bigcup_{n<\om}\bU(n)$.
The tree-ordering on $\bU$ is simply inclusion.
The {\em unary-colored coding tree of $1$-types}
is
the tree $\bU$ along with the  function $c:\om\ra \bU$ such that $c(n)=c_n$.
Thus,
$c_n$ is the $1$-type
(in the language $\mathcal{L}^-$) of $v_n$
 in $\bU(n)$  along with the additional ``unary color''
$\gamma\in\Gamma$ such that
$\gamma(v_n)$ holds in $\bK$.
 \end{defn}

\begin{rem}
 If $\mathcal{L}$ has no non-trivial unary relation symbols
 then 
$\bU(\bK)=\bS(\bK)$.
If  $\mathcal{K}$ satisfies \SFAP,
it suffices to work in $\bS(\bK)$.
The purpose of $\bU(\bK)$ is to handle  cases
such as $\bQ_n$, the rationals with an equivalence relation with finitely many equivalence classes each of which is dense in the rationals.
\end{rem}

%%%%%%%%%%%%%%%%%%%
%%%%%%%%%%%%%%%%%%%

\subsection{Passing types and  similarity}\label{subsec.3.2}

This subsection reproduces definitions  from \cite{CDP1},
with 
 simplified versions given 
 due to the fact that all    relation symbols in this article have  arity at most two.
Throughout, fix $\bK$ and let $\bS$ denote $\bS(\bK)$.
All of the  instances of 
$\bS$ 
in this subsection 
may be substituted with $\bU:=\bU(\bK)$.

\begin{defn}[Passing Type, \cite{CDP1}]\label{defn.passingtype}
Given $s,t\in \bS$  with $|s|<|t|$,
we call
  $t(|s|)$
the {\em passing type of $t$  at $s$}.
We also call $t(|s|)$
the {\em passing type of $t$ at $c_n$},
where $n$ is the integer such that $|c_n|=|s|$.
\end{defn}

Note that passing types are  partial $1$-types which contain only binary relation symbols.

\begin{defn}[Similarity of Passing Types, \cite{CDP1}]\label{defn.prespt}
Let  $m,n\in\mathbb{N}$, and 
   let
$f: \{v_m,x\} \to \{v_n,x\}$ be  given  by
$f(v_m)=v_n$ and 
 $f(x) = x$.
Suppose  $s,t\in\bS$  are such that 
$|c_m|<|s|$ and $|c_n|<|t|$.
We write
\begin{equation}
s(c_m)\sim t(c_n)
\end{equation}
when,
given any
relation symbol $R\in \mathcal{L}$ of arity two and 
ordered pair $(z_0,z_1)$
such that $\{z_0,z_1\}=\{v_m,x\}$,
it follows that 
  $R(z_0,z_1)\in s(c_m)$ if and only if
  $R(f(z_0),f(z_1))\in t(c_n)$.
   When $s(c_m)\sim t(c_n)$ holds,
 we say that  the passing type of  $s$ at $c_m$  is {\em similar} to the passing type of $t$ at $c_n$.
\end{defn}

It is clear that $\sim$ is an equivalence relation.

\begin{defn}\label{defn.similarityoftypes}
Let $\bfA$, $\bfB$ be  finite  substructures of $\bK$
with universes $\lgl v_{j_i}:i<n\rgl$, $\lgl  v_{k_i}:i<n\rgl$, respectively. 
Let 
 $s\in \bS(\ell)$ and $t\in \bS(\ell')$, where 
$\ell\ge |c_{j_{n-1}}|+1$ and $\ell'\ge |c_{k_{n-1}}|+1$. 
We say that  $s\re \bfA$ and $t\re\bfB$ are  {\em similar}, and write $s\re \bfA\sim   t\re\bfB$, if and only if 
for each $i<n$, $s(c_{j_i})\sim t(c_{k_i})$.
\end{defn}

\begin{fact}[\cite{CDP1}]\label{fact.simsamestructure}
Let $A=\lgl v_{j_i}:i<n\rgl$ and $B=\lgl v_{k_i}:i<n\rgl$ be  sets of vertices in $\bK$,
and let 
$\bfA:=\bK\re A$ and $\bfB:=\bK\re B$. 
Then $\bfA$ and $\bfB$ 
are isomorphic as ordered substructures of $\bK$,
if and only if
\begin{enumerate}
\item
$c_{j_i}$ and $c_{k_i}$ contain
the same parameter-free formulas, for each $i<n$;
and
\item
$c_{j_i}\re (\bK\re \{v_{j_m}:m<i\})
\sim
c_{k_i}\re (\bK\re \{v_{k_m}:m<i\})$,
 for all $i<n$.
\end{enumerate}
\end{fact}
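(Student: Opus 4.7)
The plan is to prove the equivalence by directly unpacking the definitions of $\sim$ on passing types and of quantifier-free $1$-types, showing that condition (1) captures all unary information while condition (2) captures all binary information about the ordered substructures $\bfA$ and $\bfB$. Because the language has only relation symbols of arity $1$ and $2$, these two pieces together determine the isomorphism type of an ordered finite structure, so no further work on higher-arity relations is needed.

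First I would handle the forward direction. Assume the map $\varphi\colon v_{j_i}\mapsto v_{k_i}$ is an isomorphism of the ordered substructures. For (1), note that the parameter-free formulas in $c_{j_i}$ are exactly the unary relations (and their negations) realized by $v_{j_i}$; since $\varphi$ preserves unary relations and the ordering identifies $v_{j_i}$ with $v_{k_i}$, the parameter-free parts of $c_{j_i}$ and $c_{k_i}$ coincide. For (2), fix $i<n$ and $m<i$. A binary formula $R(z_0,z_1)$ with $\{z_0,z_1\}=\{v_{j_m},x\}$ lies in $c_{j_i}(c_{j_m})$ iff the corresponding ordered pair holds between $v_{j_m}$ and $v_{j_i}$ in $\bK$; by the isomorphism this holds iff $R$ holds of the analogous pair from $\{v_{k_m},v_{k_i}\}$, which by definition says $R(f(z_0),f(z_1))\in c_{k_i}(c_{k_m})$ with $f$ as in Definition \ref{defn.prespt}. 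Quantifying over all binary $R$ and both orderings of the pair gives $c_{j_i}(c_{j_m})\sim c_{k_i}(c_{k_m})$.

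For the reverse direction, assume (1) and (2) and define $\varphi\colon \A\to \B$ by $\varphi(v_{j_i})=v_{k_i}$; this is a bijection since both universes are enumerated with the same length. I would check preservation of each relation symbol separately. For a unary symbol $R$, the formula $R(x)$ is a parameter-free formula, so $R(v_{j_i})$ holds iff $R(x)\in c_{j_i}$ iff (by (1)) $R(x)\in c_{k_i}$ iff $R(v_{k_i})$ holds. For a binary symbol $R$ and indices $m<i$, the passing type $c_{j_i}(c_{j_m})$ records exactly whether each of $R(v_{j_m},v_{j_i})$ and $R(v_{j_i},v_{j_m})$ holds in $\bK$; similarity with $c_{k_i}(c_{k_m})$ via the map $f$ of Definition \ref{defn.prespt} transports each such formula to the corresponding formula about $(v_{k_m},v_{k_i})$, so the two pairs satisfy the same binary atomic formulas. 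The case $m=i$ reduces to the unary case already handled via parameter-free formulas $R(x,x)$ (treated as part of $c_{j_i}$'s parameter-free content).

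I do not anticipate a serious obstacle; the only point that requires care is keeping track of both argument orders in the binary relations, which is precisely why Definition \ref{defn.prespt} quantifies over ordered pairs $(z_0,z_1)$ with $\{z_0,z_1\}=\{v_m,x\}$ rather than just one arrangement. Once that bookkeeping is made explicit, the statement is a routine translation between the syntactic language of $1$-types and the semantic content of an isomorphism of ordered finite structures.
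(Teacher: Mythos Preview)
The paper does not prove this statement; it is cited as a fact from \cite{CDP1} and no argument is given here. Your proposal is correct: it unpacks the definitions of passing-type similarity and of quantifier-free $1$-types, uses condition (1) to match unary content and condition (2) to match binary content, and correctly observes that the diagonal case $R(v_{j_i},v_{j_i})$ is captured by the parameter-free formula $R(x,x)$ in $c_{j_i}(0)$. Since there is no in-paper proof to compare against, there is nothing further to add.
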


A lexicographic order $\prec$ on $\bS$ is induced by 
fixing a linear ordering on the relation symbols  in $\mathcal{L}$ and their negations. 
We may assume that the
negated relation symbols  appear in the linear order before the  relation symbols.
Since any node of $\bS$ is completely determined by such atomic and negated atomic formulas, this
lexicographic order gives rise to a linear order on $\bS$, which we again denote by $\prec$, with the following properties:
If 
 $s \subsetneq t$,
then $s\prec t$.
For any incomparable   $s,t \in\bS$, if
 $|s\wedge t|=n$,
 then
$s\prec t$ if and only if $s\re(n+1)\prec t\re (n+1)$.
This order $\prec$
generalizes
the lexicographic order  for the case of binary relational structures in \cite{Sauer06}, \cite{Laflamme/Sauer/Vuksanovic06}, \cite{DobrinenJML20}, \cite{DobrinenH_k19}, and \cite{Zucker20}.

Given  $S\sse\bS$, let $\lgl c^S_i:i<n\rgl$ enumerate the coding nodes in $S$ in increasing length, where $n\in \mathbb{N}\cup\{\mathbb{N}\}$.

\begin{defn}[Similarity Map, \cite{CDP1}]\label{def.ssmap}
Let  $S$ and $T$  be meet-closed subsets
of $\bS$.
A function $f:S\ra T$ is a {\em similarity map} of $S$ to $T$ if 
$f$ is a bijection and 
for all nodes
$s, t \in \bS$,
the following hold:
\begin{enumerate}
\item
$f$ preserves $\prec$:
$s\prec t$ if and only if $f(s)\prec f(t)$.

\item
$f$ preserves meets, and hence splitting nodes:
$f(s\wedge t)=f(s)\wedge f(t)$.

\item
$f$ preserves relative lengths:
$|s|<|t|$ if and only if
$|f(s)|<|f(t)|$.

\item
$f$ preserves initial segments:
$s\sse t$ if and only if $f(s)\sse f(t)$.

\item
$f$ preserves  coding  nodes and their
parameter-free formulas:
Given  $c_i^S\in S$, then  $f(c_i^S)=c^T_i$;
moreover,
for  $\gamma\in\Gamma$,
$\gamma(v^S_i)$ holds in $\bK$
  if and only if
 $\gamma(v^T_i)$ holds in $\bK$,
 where $v^S_i$ and $v^T_i$ are the vertices of $\bK$ represented by coding nodes
 $c^S_i$ and $c^T_i$, respectively.
\item
$f$ {\em preserves relative passing types}:
$s(c^S_i)\sim f(s)(c^T_i)$, for all coding nodes $c^S_i$ in $S$.
\end{enumerate}

When there is a similarity map between $S$ and $T$, we say that $S$ and $T$  are {\em similar} and  write $S\sim T$.
Given a subtree $S$ of $\bS$,
let $\Sim(S)$ denote the collection of all subtrees  $T$ of $\bS$ which are similar to $S$.
If $T'\sse T$ and $f$ is a  similarity map of $S$ to $T'$, then   $f$ is  called a {\em similarity embedding} of $S$ into $T$.
\end{defn}

%%%%%%%%%%%%%%%%%%%%
%%%%%%%%%%%%%%%%%%%%

\subsection{The properties SDAP$^+$ and LSDAP$^+$}\label{subsec.3.3}

The property SDAP$^+$ is a strengthening of SDAP, and LSDAP$^+$ is a `labeled' version of SDAP$^+$.

\begin{defn}[Subtree]\label{defn.subtree}
Let $T$ be a subset of   $\bS$, and let $L$ be the set of lengths of
coding nodes in $T$
and lengths of meets
of two incomparable nodes (not necessarily coding nodes) in $T$. Then $T$
is a {\em subtree} of $\bS$
if $T$ is closed under meets and
closed under initial segments with lengths in
$L$.
\end{defn}

\begin{defn}[Diagonal tree]\label{def.diagskew}
A  subtree $T\sse \bS$ is 
{\em diagonal}
if each level of $T$ has at most one splitting node,
each splitting node in $T$ has degree two (exactly two immediate successors), and
coding node levels in $T$ have no splitting nodes.
\end{defn}

Subtrees of $\bS$  correspond to   substructures of $\bK$,  and vice versa for diagonal subtrees:
Given a subtree $T\sse\bS$, let
$N^T$ denote the set of natural numbers $n$ such that 
$c_n\in T$; 
 let $\bK\re T$ denote the substructure of $\bK$ on universe $\{v_n:n\in N^T\}$.
 We call $\bK\re T$ the
 {\em substructure of $\bK$
 represented by the coding nodes in $T$}, or simply {\em  the substructure represented by $T$}.
In the reverse direction, given a substructure $\bM\le \bK$,
 let $\bS\re\bM$ denote the subtree of $\bS$
induced by the meet-closure of the coding nodes $\{c_n:v_n\in \mathrm{M}\}$, and 
 call $\bS\re \bM$ the {\em subtree of $\bS$ induced by $\bJ$}.
 If $T$ is a diagonal subtree and 
$\bM=\bK\re T$,
 then $\bS\re \bM=T$,
 as $T$ being  diagonal ensures that the  coding nodes in $\bS\re \bM$ are exactly those in  $T$.

Given two
substructures  $\bfA,\bfB$ of $\bK$, we write $\bfA\cong^{\om} \bfB$
when there exists an $\mathcal{L}$-isomorphism between $\bfA$ and $\bfB$ that preserves
 the linear order on
their universes.
It follows from
 Fact \ref{fact.simsamestructure} that 
 for any subtrees
$S,T\sse \bS$,
$S\sim T$ implies that $\bK\re S\cong^{\om}\bK\re T$.

\begin{notation}\label{notn.cong<}
Given a diagonal subtree $T$,
 let 
$\lgl c^T_n: n<N\rgl$  denote the
enumeration of   the coding nodes in $T$ in order of increasing length, and let
 $\ell^T_n$ denote  $|c^T_n|$, the {\em length} of $c^T_n$.
For a  finite subset $A\sse T$,  let
\begin{equation}
 \ell_A=\max\{|t|:t\in A\}\mathrm{\ \ and\ \ } \max(A)=
\{s\in A: |s|=\ell_A\}.
 \end{equation}
For any subset $A\sse T$, 
let
\begin{equation}
A\re \ell=\{t\re \ell : t\in A\mathrm{\ and\ }|t|\ge \ell\}
\end{equation}
 and let
\begin{equation}
A\rl \ell=\{t\in A:|t|< \ell\}\cup A\re \ell.
\end{equation}
Thus, $A\re \ell$ is a level set, while $A\rl \ell$ is the set of nodes in $A$ with length less than $\ell$ along with the truncation
to $\ell$ of the  nodes in $A$ of length at least
 $\ell$.
For  $A,B\sse T$,   $B$ is an {\em initial segment} of $A$ if   $B=A\rl \ell$
 for some $\ell$ equal to
   the length of some node in $A$.
   In this case, we also say that
   $A$ {\em end-extends} (or just {\em extends}) $B$.
If $\ell$ is not the length of any node in $A$, then
  $A\rl \ell$ is not a subset  of $A$, but  is  a subset of $\widehat{A}$, where
  $\widehat{A}$ denotes $\{t\re n:t\in A\mathrm{\ and\ } n\le |t|\}$.
Given  $t\in T$ at the level of a coding node in $T$, $t$ has exactly one immediate successor in $\widehat{T}$, which   we denote by 
$t^+$.
\end{notation}

Given a substructure $\bM$ of $\bK$,
let $\bM_n$ denote $\bM$ restricted to its first $n$ vertices. 
A tree  $T$ is  {\em perfect}  if   each node in  $T$ has  at least  two  incomparable extensions in $T$.

\begin{defn}[Diagonal Coding Subtree]\label{defn.sct}
A subtree $T\sse\bS$ is called a {\em diagonal coding subtree} if $T$ is diagonal, perfect, 
   $\bM:=\bK\re T\cong\bK$, and the following holds:
  \begin{enumerate}
 \item[]
Suppose  $s\in T$ 
with $|s|=\ell^T_{i-1}+1$ for some $i\ge 1$, or else suppose $s$ is 
the stem of $T$ and let $i=0$.
Then 
for each $n>i$ and  each
 $1$-type $\tau$ over $\bK_n$ such that 
$\tau\re\bK_i\sim s$,
there is  a $t\in T\re (\ell^T_{n-1}+1)$ extending $s$ such that $t\re \bM_n\sim \tau$.
\end{enumerate}
\end{defn}

\begin{defn}[Diagonal Coding Tree Property, \cite{CDP1}]\label{defn.DCTP}
A \Fraisse\ class $\mathcal{K}$ in language $\mathcal{L}$  satisfies the {\em Diagonal Coding Tree Property (DCTP)}
if  given any enumerated \Fraisse\ structure $\bK$
for $\mathcal{K}$,
there is a diagonal coding
subtree.
\end{defn}

\begin{defn}[$+$-Similarity, \cite{CDP1}]\label{def.plussim}
Let $T$ be a diagonal coding tree for
the \Fraisse\ limit $\bK$ of
a \Fraisse\
class $\mathcal{K}$, and
suppose $A$ and $B$
are finite subtrees of $T$.
We write  $A\plussim B$ and say that
 $A$ and $B$ are
 {\em $+$-similar} if and only if
  $A\sim B$ and
 one of the following two cases holds:
 \begin{enumerate}
 \item[]
   \begin{enumerate}
\item[\bf Case 1.]
 If $\max(A)$  has a splitting node in $T$,
 then so does $\max(B)$,
  and the similarity map from $A$ to $B$  takes the splitting node in $\max(A)$ to the splitting node in $\max(B)$.
    \end{enumerate}
     \end{enumerate}
       \begin{enumerate}
    \item[]
        \begin{enumerate}
  \item[\bf Case 2.]
If $\max(A)$ has a coding node, say
 $c^A_n$,
and  $f:A\ra B$ is  the similarity map,
then
  $s^+(n)\sim f(s)^+(n)$ for each $s\in \max(A)$.
 \end{enumerate}
    \end{enumerate}

Note that $\plussim$ is an  equivalence relation, and  $A\plussim B$ implies $A\sim B$.
When $A\sim B$ ($A\plussim B$), we say that they have the same {\em similarity type} ({\em $+$-similarity type}).
\end{defn}

\begin{defn}[Extension Property,  \cite{CDP1}]\label{defn.ExtProp}
We say that
$\bK$
has the {\em Extension Property} when
the following condition  (EP) holds:
\begin{enumerate}
\item[(EP)]
Suppose $A$ is a finite or infinite subtree of   some
$T\in\mathcal{T}$.
Let $k$ be given  and suppose
$\max(r_{k+1}(A))$ has a splitting node.
Suppose that $B$ is a $+$-similarity copy of $r_k(A)$ in $T$.
Let  $u$ denote the splitting node in $\max(r_{k+1}(A))$,
and let
 $s$ denote  the node in $\max(B)^+$ which must be extended to a splitting node in order to obtain a $+$-similarity copy of $r_{k+1}(A)$.
If $s^*$ is a splitting node  in $T$  extending $s$,
then  there are extensions of the rest of the nodes in $\max(B)^+$ to the same length as $s^*$ resulting in a $+$-similarity copy
of $r_{k+1}(A)$ which
can be extended to a copy of $A$.
\end{enumerate}
\end{defn}

It was shown in Lemma 4.17 of \cite{CDP1} that SFAP implies  the Extension Property, and that moreover,
any SFAP class with an additional linear order also easily satisfy the Extension Property.

\begin{defn}[\SDAP$^+$, \cite{CDP1}]\label{def.SDAPCodingTree}
A \Fraisse\ class $\mathcal{K}$ satisfies {\em \SDAP$^+$} if and only if $\mathcal{K}$ satisfies SDAP and 
any 
\Fraisse\ limit $\bK$ of $\mathcal{K}$ with universe $\mathbb{N}$
satisfies
 the Diagonal Coding Tree Property and 
the Extension Property.
\end{defn}

\begin{rem}
If  there exists an enumerated  \Fraisse\ limit $\bK$ of $\mathcal{K}$  satisfying DCTP and EP, then 
every enumerated \Fraisse\ limit of $\mathcal{K}$ also satisfies DCTP and EP. 
Thus, the property SDAP$^+$ is truly a property of the \Fraisse\ class $\mathcal{K}$.
This being the case, we will  refer to both a \Fraisse\ class and its \Fraisse\ limit as satisfying SDAP$^+$.
\end{rem}

The Labeled Substructure Disjoint Amalgamation Property is a labeled  version applicable to structures such as $\bQ_{\bQ}$, $\bQ_{{\bQ}_{\bQ}}$, and so forth (see Subsection 4.4 in \cite{CDP1}).
Since the proofs for SDAP$^+$ and LSDAP$^+$ structures are almost identical, for the sake of space, we will provide proofs  for  SDAP$^+$ structures.

%%%%%%%%%%%%%%%%%%%
%%%%%%%%%%%%%%%%%%%%%

Note that any \SDAP$^+$ and \LSDAP$^+$ structures can be encoded inside $\bU$ according to the following convention.

\begin{convention}\label{conv.unaries_partition}
There is some $k\ge 1$, a partition $P_i$ ($i<k$) of the unary relation symbols in $\mathcal{L}$  and  subtrees $T_i\sse\bU$ so that  the following hold:
\begin{enumerate}
\item
$T:=\bigcup_{i<k}T_i$
 forms a $k$-rooted diagonal coding tree;
\item
For each $i<k$,
the coding nodes in $T_i$ have only unary relations from $P_i$, and all the unary relation symbols from $P_i$ occur densely in $T_i$;
\item
Property (2) persists in every coding subtree of $T$. 
\end{enumerate}
In this way,  the partition $P_i$, $i<k$, is optimal and persistent. 
\end{convention}

For simplicity, though, we will assume the following convention.

\begin{convention}\label{conv.Gamma_ts}
Let $\mathcal{K}$ be a \Fraisse\ class in a language $\mathcal{L}$ and $\bK$ a \Fraisse\ limit of $\mathcal{K}$.
If
(a)
$\mathcal{K}$  satisfies \SFAP,
or
(b)
$\bK$
 satisfies \EEAP$^+$ and either
 has no unary relations or has no transitive relations, then
we  work inside a diagonal coding subtree $\bT$ of
$\bS$.
Otherwise, we work inside a diagonal coding subtree $\bT$ of $\bU$.
 \end{convention}

Convention 
\ref{conv.Gamma_ts}
is a special case of Convention 
\ref{conv.unaries_partition}.
We will prove the main theorems assuming Convention \ref{conv.Gamma_ts},
noting that it is straightforward 
 to recover the general results under 
Convention 
\ref{conv.unaries_partition}.

%%%%%%%%%%%%%%%%%
%%%%%%%%%%%%%%%%%%

We conclude this section with the  result on big Ramsey degrees  from \cite{CDP2}.
A  set of  incomparable coding nodes  $A\sse \bU$ is 
 called an {\em  antichain}.
An antichain of coding nodes in $\bU$ is called a {\em diagonal antichain}
if the tree induced by the meet closure of the antichain is diagonal.

\begin{defn}[Diagonal Coding Antichain, \cite{CDP1}]\label{defn.DCA}
A diagonal antichain $A\sse\bU$ is called a 
{\em diagonal coding antichain} (DCA)
if $\bK\re A\cong \bK$ and (the tree induced by)
$A$ is a Diagonal Coding Subtree.
\end{defn}

\begin{lem}[\cite{CDP1}]\label{lem.bD}
Suppose $\mathcal{K}$ is a \Fraisse\ class  satisfying
\SDAP$^+$ or \LSDAP$^+$.
Then  there is an infinite  diagonal antichain of coding nodes  $\bD\sse \bU$  so that $\bK\re \bD\cong^{\om}\bK$.
\end{lem}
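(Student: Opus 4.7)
The plan is to apply the Diagonal Coding Tree Property (which is part of \SDAP$^+$ and \LSDAP$^+$) to obtain a diagonal coding subtree $\bT\sse\bU$, and then inside $\bT$ construct the desired antichain $\bD$ by induction, thinning out $\bT$ one coding node at a time while arranging pairwise incomparability, the correct passing types, and the diagonal shape of the meet-closure.

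First I would fix, by DCTP, a diagonal coding subtree $\bT\sse\bU(\bK)$. By Definition \ref{defn.sct} the map $v_n\mapsto c_n^{\bT}$ witnesses $\bK\re\bT\cong^{\om}\bK$, and $\bT$ satisfies the density property (\textasteriskcentered): for every $s\in\bT$ at a level one above a coding-node level (or at the stem of $\bT$) and for every $1$-type $\tau$ over $\bK_n$ with $\tau\re\bK_i\sim s$, there is some $t\in\bT$ extending $s$ with $t\re\bM_n\sim \tau$.

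Next I would build $\bD=\{d_n:n<\om\}$ by recursion on $n$, maintaining the invariants: (i) $d_0,\dots,d_n$ are pairwise incomparable coding nodes in $\bT$; (ii) the meet-closure of $\{d_i:i\le n\}$ is diagonal, with each new meet appearing at a fresh level below any previously placed meet or coding node on the relevant branch; and (iii) the bijection $v_i\mapsto d_i$ ($i\le n$) is an $\mathcal{L}$-isomorphism $\bK_{n+1}\to \bK\re\{d_i:i\le n\}$ preserving the enumeration order. Take $d_0:=c_0^{\bT}$. Given $d_0,\dots,d_{n-1}$, choose a splitting node $u\in \bT$ strictly above $\max_i |d_i|$ and lying off every branch through the previously selected $d_i$'s (this is possible because $\bT$ is perfect and its splitting nodes occur cofinally, and because diagonality leaves at least one non-traversed immediate successor available at each splitting level). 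Pick the immediate successor $u^\ast$ of $u$ that branches away from the existing $d_i$'s, and apply (\textasteriskcentered) together with the Extension Property (EP) to extend $u^\ast$ inside $\bT$ to a coding node $d_n\in\bT$ whose passing type over $\{d_0,\dots,d_{n-1}\}$ matches $\type(v_n/\bK_n)$ transported along the isomorphism from (iii).

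Then I would verify the invariants: (i) is guaranteed because $d_n$ extends $u^\ast$, a successor lying off every branch through the previous $d_i$'s, so $d_n\wedge d_i\sse u$ for each $i<n$; (ii) holds because the only new meet levels introduced are $\{|u|\}$ (for the meets with the earlier $d_i$'s that share $u$'s predecessors) and these can be chosen distinct from all prior meet levels and from coding-node levels since splitting nodes appear cofinally in $\bT$; (iii) follows from the passing-type condition together with Fact \ref{fact.simsamestructure}. Taking the union produces the required $\bD$.

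The main obstacle is the simultaneous control at the inductive step of three things: realizing the correct $1$-type (so that isomorphism $\bK\re\bD\cong^\om\bK$ is preserved), incomparability with all previously selected $d_i$'s, and diagonality of the induced meet-closure. Density (\textasteriskcentered) of $\bT$ handles the passing types, perfectness of $\bT$ plus diagonality provide enough off-branch splitting nodes for incomparability, and EP lets us complete partially specified similarity copies into honest coding nodes of $\bT$. For \LSDAP$^+$ the same construction goes through, with the only additional bookkeeping being the labels coming from the convex equivalence classes, which are automatically respected because we work within a diagonal coding subtree of $\bU$.
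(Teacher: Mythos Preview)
The paper does not prove this lemma; it is cited from \cite{CDP1} without proof. Assessing your proposal on its own merits, there is a genuine gap in the inductive step.

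At stage $n$ you fix $d_0,\dots,d_{n-1}$, then choose a splitting node $u\in\bT$ with $|u|>\max_i|d_i|$, and only \emph{afterwards} attempt to arrange that the extension $d_n\supseteq u$ has passing types at $d_0,\dots,d_{n-1}$ matching $\type(v_n/\bK_n)$. But the passing type of $d_n$ at $d_i$ is $d_n(|d_i|)=u(|d_i|)$, since $|u|>|d_i|$; it is already fixed by the initial segment $u\re(|d_i|+1)$ and cannot be altered by extending $u$ further. Property (\textasteriskcentered) lets you realise prescribed types at coding levels \emph{above} your current position, not below it. Your verification of invariant (ii) has the same defect: you claim ``the only new meet levels introduced are $\{|u|\}$'', but in fact adding $d_n\supseteq u$ creates the meets $d_n\wedge d_i=u\wedge d_i$, which lie at levels strictly below $|d_i|<|u|$ and are entirely determined by $u$'s initial segments. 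You have given no argument that these levels are distinct from one another or from the existing critical levels of $\{d_0,\dots,d_{n-1}\}$.

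The fix is to carry more through the induction: alongside $d_0,\dots,d_{n-1}$ maintain a level set of continuation nodes in $\bT$ (at a level above $|d_{n-1}|$) such that every $1$-type over the structure coded by $\{d_0,\dots,d_{n-1}\}$ is represented by exactly one continuation node, and such that the tree induced by the $d_i$'s together with the continuation nodes is already diagonal. To place $d_n$, select the continuation node $s$ with the required type, extend it in $\bT$ to a fresh splitting node, send one immediate successor to a coding node (this becomes $d_n$), and use the other successor together with (\textasteriskcentered) and EP to rebuild the full family of continuation nodes for stage $n+1$. This is the construction carried out in \cite{CDP1}.
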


\begin{thm}[Coulson, Dobrinen, Patel, \cite{CDP2}]\label{thm.CDP}
Let $\mathcal{K}$ be a \Fraisse\ class satisfying {\SDAP$^+$}
or {\LSDAP$^+$}
 in a finite relational language with relation symbols of arity at most two.
Then for each finite structure $\bfA\in \mathcal{K}$,
the big Ramsey degree of $\bfA$ in $\bK$ is exactly the number of  similarity types of diagonal antichains
representing a copy of $\bfA$.
\end{thm}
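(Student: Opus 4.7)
The plan is to prove Theorem \ref{thm.CDP} by establishing matching upper and lower bounds: every finite coloring of copies of $\bfA$ can be reduced, inside a subcopy of $\bK$, to a coloring that depends only on the similarity type of the representing diagonal antichain (upper bound), and conversely, each similarity type persists in every subcopy (lower bound). The number of similarity types is finite because each diagonal antichain representing $\bfA$ has size $|\A|$ and lives in a meet-closed diagonal tree of bounded shape, with passing types drawn from the finite set determined by $\mathcal{L}$.

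For the upper bound, I would first fix an enumerated \Fraisse\ limit $\bK$ together with a diagonal coding subtree $\bT \subseteq \bU$, whose existence is guaranteed by the Diagonal Coding Tree Property in Definition \ref{defn.DCTP}. By Lemma \ref{lem.bD} there is an infinite diagonal coding antichain $\bD$ with $\bK \re \bD \cong^{\om} \bK$, so every copy of $\bfA$ in $\bK \re \bD$ is represented by some finite diagonal antichain in $\bD$ whose similarity type is one of finitely many. The core step is a tree-Ramsey pigeonhole: given a finite coloring of the diagonal antichains in $\bT$ of a single fixed similarity type $\sigma$, one produces a similarity copy $\bT' \sim \bT$ inside $\bT$ on which this coloring is constant. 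I would carry this out by induction on the height of $\sigma$, using a Halpern--L\"auchli style fusion argument over levels of $\bT$: at each coding-node level or splitting-node level, apply the SDAP (or LSDAP) amalgamation to produce many alternative extensions, and then use a finite coloring pigeonhole to thin them to a monochromatic family; the Extension Property (Definition \ref{defn.ExtProp}) ensures that the thinned nodes still extend to a $+$-similarity copy of the required finite portion of $\bT$. Iterating this thinning over the finitely many similarity types yields a single subcopy where the coloring depends only on $\sigma$, proving $T(\bfA, \mathcal{K}) \le $ number of similarity types.

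For the lower bound, I would show that each similarity type $\sigma$ of a diagonal antichain representing $\bfA$ is \emph{persistent}: given any subcopy $\bK' \le \bK$ with $\bK' \cong \bK$, there exists a diagonal antichain in $\bS \re \bK'$ (equivalently, a copy of $\bfA$ in $\bK'$) of similarity type $\sigma$. This is a direct construction inside a diagonal coding subtree of $\bK'$, using SDAP repeatedly to realize the required $1$-types at each stage of the antichain; the property ensures the passing types prescribed by $\sigma$ can be extended one vertex at a time while maintaining disjoint amalgamation, and Fact \ref{fact.simsamestructure} then guarantees that the resulting antichain indeed carries similarity type $\sigma$ and represents a copy of $\bfA$. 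Consequently, no coloring can merge two distinct similarity types: color each similarity type a different color, and persistence forces every subcopy to witness every color, giving $T(\bfA, \mathcal{K}) \ge $ number of similarity types.

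The main obstacle is the Halpern--L\"auchli/pigeonhole step in the upper bound, which must simultaneously preserve splitting structure, coding-node placement, and all passing-type data along the way. The payoff of formulating SDAP$^+$ (and LSDAP$^+$) as the conjunction of SDAP with DCTP and EP is precisely that this fusion goes through: SDAP provides disjoint extensions with prescribed types at every step, DCTP gives the ambient diagonal coding tree in which the induction takes place, and EP ensures that once a splitting node is chosen above a partial configuration, the remaining nodes in the level set can also be extended compatibly, so that $+$-similarity to the target configuration is preserved. In practice this would be executed by a forcing argument (as foreshadowed by the reference to Theorem \ref{thm.matrixHL}) whose conditions approximate the desired subtree; the amalgamation properties translate directly into density statements for this forcing, and a standard genericity-plus-absoluteness argument then extracts the monochromatic subtree in the ground model.
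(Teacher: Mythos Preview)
This theorem is not proved in the present paper; it is quoted from \cite{CDP2} as background (see the attribution in the theorem header and its placement at the end of Section~\ref{sec.background}). There is therefore no ``paper's own proof'' against which to compare your proposal. The paper explicitly says that the lower bound (persistence of each similarity type) is a result of \cite{CDP2} that does \emph{not} follow from the infinite-dimensional Ramsey theory developed here; and while Corollary~\ref{cor.NW} does recover the upper bound, it does so as a consequence of the main infinite-dimensional theorem rather than by the finite-level Halpern--L\"auchli/fusion you outline.

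That said, your sketch is a faithful high-level description of the strategy carried out in \cite{CDP1} and \cite{CDP2}: the upper bound is indeed obtained by a forcing-based pigeonhole (the precursor of Theorem~\ref{thm.matrixHL}) applied one similarity type at a time inside a diagonal coding tree, with SDAP supplying the extensions and the Extension Property guaranteeing that $+$-similarity is preserved through the fusion; the lower bound is a direct persistence construction. If you intend to submit this as a proof, you should either cite \cite{CDP2} for the result (as the present paper does) or flesh out the pigeonhole step in full detail, since the ``Halpern--L\"auchli style fusion'' is where all the work lies and your description of it remains at the level of a plan rather than an argument.
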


%%%%%%%%%%%%%%%%%%%%%%%%%%%%%%%%%%%%%
%%%%%%%%%%%%%%%%%%%%%%%%%%%%%%%%%%%%%
%%%%%%%%%%%%%%%%%%%%%%%%%%%%%%%%%%%%%
%%%%%%%%%%%%%%%%%%%%%%%%%%%%%%%%%%%%%

\section{Baire spaces of diagonal coding antichains}\label{sec.BaireSpaceDCA}

We now set up the subspaces of the Baire space 
 for which we will prove analogues of the Galvin--Prikry and Ellentuck Theorems.
Given a \Fraisse\ structure $\bK$
with universe $\mathbb{N}$,
each subcopy of $\bK$ can be identified  with 
its universe, an infinite subset of $\mathbb{N}$.
Thus, the collection  ${\bK\choose\bK}$  of all subcopies of $\bK$ is naturally  identified with a subspace of the Baire space $[\mathbb{N}]^{\mathbb{N}}$.

The existence of big Ramsey degrees  greater than one precludes 
 any 
simplistic approach to infinite-dimensional Ramsey theorems in terms only of definable  sets  on 
the full space
 ${\bK\choose\bK}$ of subcopies of $\bK$.
At the same time,
Theorem \ref{thm.CDP}
shows us where to look for 
 viable infinite-dimensional   Ramsey theorems: 
 precisely on subspaces of the Baire space
determined by 
collections of 
coding subtrees  which are all similar to each other. 
While infinite-dimensional Ramsey theorems for all  such spaces follow from the methods in this paper,
we will concentrate on  spaces of 
 antichains similar to some fixed diagonal coding antichain, as such  spaces will additionally 
 recover  exact   big Ramsey degrees.

\begin{defn}[Spaces of Diagonal Coding Antichains]\label{defn.BS}
Let $\mathcal{K}$ be a  \Fraisse\ class satisfying SDAP$^+$,  and 
 let $\bK$ be a \Fraisse\ limit of 
$\mathcal{K}$ with universe $\mathbb{N}$.
Let $\bD$ be any diagonal coding 
antichain;
that is,  $\bD$ is a diagonal antichain  of coding nodes  in $\bS(\bK)$ or $\bU(\bK)$  representing a subcopy of  $\bK$.
(Recall Convention \ref{conv.Gamma_ts}.)

Let  $\mathcal{D}(\bD)$ denote the  collection of all subsets $M\sse \bD$ such that $M\sim \bD$.
The partial ordering $\le$ on $\mathcal{D}(\bD)$ is simply inclusion.
For  $M\in\mathcal{D}(\bD)$,  when we write $N\le M$, it is implied that $N\in\mathcal{D}(\bD)$.
When $\bD$ is understood, we  simply write $\mathcal{D}$.

Each diagonal coding antichain $M\in\mathcal{D}$ 
uniquely determines the  substructure $\bfM:=\bK\re\{v_i:c_i\in M\}$.
Since $M\sim \bD$, it follows that $\bM\cong^{\om}\bK$.
Let $\bfD$ denote $\bK\re\bD$, and let 
\begin{equation}
\bK(\bfD)
=\{\bM:M\in\mathcal{D}\}.
\end{equation}
Then $\bK(\bfD)$ is  identified with  the  subspace $\{\{i\in\mathbb{N}:v_i\in \bM\}:\bM\in \bK(\bfD)\}$ of the Baire space.
These are the spaces for which we will prove infinite-dimensional Ramsey theorems in Section \ref{sec.MainTheorem}.

Each diagonal coding antichain $M\in\mathcal{D}$  can also be identified  with the tree induced by its meet-closure.
The set of coding and splitting nodes in (the tree induced by) $M$ are called the {\em critical nodes} of $M$, and we  let 
 $\lgl d^M_n:n\in \mathbb{N}\rgl$ denote their 
enumeration  in order of increasing length.
For $n \in \mathbb{N}$, $M(n)$ denotes the set of nodes in $M$ of length $|d^M_n|$.
Given  $k\in \mathbb{N}$, $r_k(M)$ denotes  the finite subset of $M$ consisting of all nodes in $M$ with length less than  $|d^M_{k}|$.
Thus,  $r_0(M)$ is the empty set
and 
\begin{equation}
r_k(M)=\bigcup_{n<k}M(n).
\end{equation}
We define the following  notation in  line with topological Ramsey space theory  from
\cite{TodorcevicBK10}.
For $k\in\mathbb{N}$, define
\begin{equation}
\mathcal{AD}_k=\{r_k(M):M\in\mathcal{D}\},
\end{equation}
the set of all $k$-th restrictions of members of $\mathcal{D}$.
Let
\begin{equation}
\mathcal{AD}=\bigcup_{k=1}^{\infty}\mathcal{AD}_k,
\end{equation}
the set of all finite approximations to members of $\mathcal{D}$.
Note that having identified $M$ with the tree it induces, members of $\mathcal{AD}$ are finite diagonal trees which are initial segments of the diagonal tree $M$.

For $A,B\in\mathcal{AD}$
we write
 $A\sqsubseteq B$  if and only if
 there is some $M\in\mathcal{D}$ and some $j\le k$ such that
 $A=r_j(M)$ and $B=r_k(M)$.
 In this case,
  $A$ is called  an  {\em initial segment} of $B$; we also
say that $B$ {\em extends} $A$.
If $A\sqsubseteq B$ and $A\ne B$, then we say that $A$ is a {\em proper initial segment} of $B$ and write $A\sqsubset B$.
When $A=r_j(M)$  for some $j$,
we  also write $A\sqsubset M$
 and call $A$ an
 {\em initial segment} of $M$.

The {\em metric topology} on $\mathcal{D}$ is the topology induced by basic open cones   of the form
\begin{equation}
[A,\bD]=\{M\in \mathcal{D}: \exists k\,  (r_k(M)=  A)\},
\end{equation}
 for  $A\in\mathcal{AD}$.
The {\em Ellentuck topology} on $\mathcal{D}$ is induced by basic open sets of the form
\begin{equation}
[A,M]=\{N\in \mathcal{D}:\exists k\,(r_k(N)=A)\mathrm{\ and\ } N\le M\},
\end{equation}
where $A\in\mathcal{AD}$ and $M\in\mathcal{D}$.
Thus, the Ellentuck topology refines the metric topology.
%For $k<\om$, as  defined in \cite{TodorcevicBK10}, we will sometimes use the abbreviation  $[k,T]$ to denote $[r_k(T),T]$ in order reduce subscripts in some proofs.

Given $A\in\mathcal{AD}$,
let $\ell_A$ denote the maximum of the lengths of nodes in $A$,
and
 let 
\begin{equation}
\max(A)=\{s\in A:|s|=\ell_A\}.
\end{equation}
The partial ordering $\le_{\mathrm{fin}}$ on $\mathcal{AD}$ is defined as follows:
For   $A,B\in\mathcal{AD}$,
write
$A\le_{\mathrm{fin}} B$ if and only if  $A$ is a subtree of $B$.
 Define  $\depth_M(A)$ to be   the least  integer $k$ such that  $A\le_{\mathrm{fin}}r_k(M)$, if it exists; otherwise, define $\depth_M(A)=\infty$.
Lastly, given  $j<k$, $A\in\mathcal{AD}_j$ and $M\in\mathcal{D}$, define
\begin{equation}
r_k[A,M]=\{r_k(N):N\in [A,M]\}.
\end{equation}
\end{defn}

%{\color{red}  Say something here.}

%\begin{lem}\label{lem.DCTPImplication}
%The Diagonal Coding Tree Property implies every finite diagonal antichain inside a diagonal coding antichain 
%extends to a diagonal coding antichain inside that given one.
%{\color{red} Make this nicer, but the idea is correct.}
%\end{lem}

%%%%%%%%%%%%%%%%%%%%%%%%%%%%%%%%%%%%%%%%%
%%%%%%%%%%%%%%%%%%%%%%%%%%%%%%%%%%%%%%%%%
%%%%%%%%%%%%%%%%%%%%%%%%%%%%%%%%%%%%%%%%%
%%%%%%%%%%%%%%%%%%%%%%%%%%%%%%%%%%%%%%%%%

\section{Forcing the Extended  Pigeonhole Principle}\label{sec.APP}

This section proves an enhanced pigeonhole principle for Baire spaces of diagonal coding antichains which preserves the  width in  some finite initial segment of the ambient antichain. 
 This is done to prove the pigeonhole principle (axiom {\bf A.4} of Todorcevic) while simultaneously  overcoming the fact 
that the amalgamation axiom {\bf A.3}(2)  of Todorcevic does not hold for many spaces of the form $\mathcal{D}(\bD)$.
The proof will use 
 forcing techniques to do infinitely many unbounded searches for  finite objects with some homogeneity properties.  
Since the objects are finite, they exist in the ground model; no generic extension is needed for the main theorem of this section.

\begin{defn}[Good Diagonal Coding Antichains]\label{defn.CDABetter}
Fix an enumerated
\Fraisse\ structure  $\bK$ satisfying \SDAP$^+$.
We call a  diagonal  coding antichain $M$  {\em good} if it satisfies the following:
\begin{enumerate}
\item
For each $n\in\mathbb{N}$,
the longest   splitting node  in $M$ with length less than 
$|c^{M}_n|$ 
extends $\prec$-right to $c^{M}_n$.
We call this splitting node the {\em splitting predecessor} of $c^{M}_n$ and denote it by 
$\mathrm{sp}_M(c^{M}_n)$.
\item
For any   $m<n$,
letting $s$ be the $\prec$-left extension of 
$\mathrm{sp}_M(c^{M}_m)$ 
in $M\re (|c^{M}_m|+1)$ and 
$t$ be the $\prec$-left extension of 
$\mathrm{sp}_M(c^{M}_n)$
in $M\re (|c^{M}_n|+1)$,
we have  $s(c^{M}_m)\sim  t(c^{M}_n)$.
\item
There is a $k\in \mathbb{N}$ such that 
for all $n\ge k$,
to  each $1$-type $\sigma$ over $\bK_{n+1}$
there corresponds a unique node $s\in M \re (|c^{M}_n|+1)$ 
such that $\type(s/\bM_{n+1})\sim \sigma$.
\end{enumerate}
\end{defn}

Fix throughout this section  a good diagonal coding antichain  $\bD$ for $\bK$, and let $\mathcal{D}$ denote $\mathcal{D}(\bD)$.
For any subset $U\sse \bD$, finite or infinite,
 let
$L_U$ denote $\{|t|:t\in U\}$, the set of lengths of nodes in $U$,
and let 
$U^{\wedge}$ denote the meet-closure of $U$.
Define 
\begin{equation}
\widehat{U}=\{t\re \ell:t\in U\mathrm{\ and\ }\ell\le|t|\},
\end{equation}
the tree of all initial segments  of members of $U$,
and 
\begin{equation}
\tree({U})=\{t\in \widehat{U}:
|t|\in L_{U^{\wedge}}\},
\end{equation}
the tree induced by the meet-closure of $U$.
We will abuse notation an identify $U$ with tree$(U)$.

Let
\begin{equation}\label{eq.AThat}
\widehat{\mathcal{AD}}=\{A\rl \ell:A\in\mathcal{AD}\mathrm{\ and\ }\ell\le \ell_A\}.
\end{equation}
Given $M\in \mathcal{D}$,
let  $\mathcal{AD}(M)$ denote the members of $\mathcal{AD}$ which are contained in $M$.
For $k\in\mathbb{N}$, let $\mathcal{AD}_k(M)$ denote the set of those $A\in\mathcal{AD}_k$ such that $A$ is a subtree of $M$.
 Define
\begin{equation}\label{eq.AThatT}
\widehat{\mathcal{AD}}(M)=\{A\rl \ell:
A\in  \mathcal{AD}(M) \mathrm{\ and\ } \ell\in L_M\}.
\end{equation}
%We point out that $ \widehat{\mathcal{AD}}(\bD)=\widehat{\mathcal{AD}}$, as $L_{\bD}=\om$.
Note that for any $M\in\mathcal{D}$,  there are  members of $\widehat{\mathcal{AD}}(M)$ which are not similar to  $r_n(\bD)$ for any $n$, and hence are not members of $\mathcal{AD}(M)$.

\begin{defn}\label{defnhatAT}
Given $M\in \mathcal{D}$ and
$B\in\widehat{\mathcal{AD}}(M)$,
letting $m$ be the least integer
for which there exists $B'\in\mathcal{AD}_m$ such that $\max(B)\sqsubseteq \max(B')$,
define
\begin{equation}\label{eq.AT}
[B,M]^*=\{N\in\mathcal{D}:
\max(B)
\sqsubseteq \max(r_m(N))  \mathrm{\ and\ }  N\le M\}.
\end{equation}
For $n\ge m$, define
\begin{equation}\label{eq.rn*}
r_n[B,M]^*=\{r_n(N):N\in [B,M]^*\},
\end{equation}
and let
\begin{equation}
r[B,M]^*=\bigcup_{m\le n} r_n[B,M]^*.
\end{equation}
\end{defn}

Given $B\in\widehat{\mathcal{AD}}$ and  $M\in\mathcal{D}$,
notice that the set 
$[B,M]^*$ 
from Definition \ref{defnhatAT}
 is open in the Ellentuck topology on $\mathcal{D}$:
If $B$ is in $\mathcal{AD}_k$ for some $k$,
then  the set  $[B,M]^*$ is the union of
$[B,M]$ along with
 all $[C,M]$, where
$C \in\mathcal{AD}_k$ and $\max(C)$ end-extends $\max(B)$.
If $B$ is in $\widehat{\mathcal{AD}}$ but not in $\mathcal{AD}$,
then
 letting  $k$ be the least integer for which there is some $C\in\mathcal{AD}_k$  with $\max(C)\sqsupset \max(B)$,
 we see that
$[B,M]^*$ equals the union of  all $[C,M]$, where $C\in\mathcal{AD}_{k}$ and $B \sqsubseteq C$.
For the same reasons, the set $[B, \bD]^*$ is open in the  metric topology on $\mathcal{D}$.
Notice also  that
$r_{n}[B, M]^*$ defined in equation (\ref{eq.rn*})
is equal to
$\{C\in\mathcal{AD}_{n}(M):
\max(B)\sqsubseteq \max(C)\}$.

Given $M\in\mathcal{D}$,
 a splitting node $s\in M$ is  called a
{\em splitting predecessor  of a coding node in $M$}
(or just {\em splitting predecessor} if $M$ is understood)
 if and only if
 there is a coding node $c\in M$
 such that $s\subset c$ and
 $|c|$ is minimal in
$L_M$ above  $|s|$.
Given a coding node $c$ in $M$, we write
$\splitpred_M(c)$ to denote the splitting predecessor of $c$ in $M$.
Note that $s=\splitpred_M(c)$ if and only if the minimal node  in $M$ extending the $\prec$-right  extension of $s$ is a coding node.
When $M=\bD$, we will  usually write $\splitpred(c)$ in place  of 
$\splitpred_{\bD}(c)$.

Given   $A\in \widehat{\mathcal{AD}}(M)$,
let  $A^+$ denote the union of $A$  with the set of   immediate successors  in    $\widehat{M}$ of the members of $\max(A)$; thus,
\begin{equation}
A^+=A\cup \{t\in M\re (\ell_{A}+1):t\re \ell_A\in A\}
\end{equation}
and    $\max(A^+)$ is the   set of all
nodes  in $A^+$ with  length $\ell_A+1$.

A set of nodes is called a {\em level set} if all nodes in the set have the same length. 
For  level sets $X,Y\sse\bD$, we say that $Y$ {\em end-extends} $X$ and 
write $X\sqsubset Y$ if and only if $X$ and $Y$ have the same cardinality, $\ell_X<\ell_Y$,  and $Y\re \ell_X=X$.
More generally, for $A,B\in\widehat{\mathcal{AD}}$, write 
$A\sqsubseteq B$ 
if and only if  $A= B\rl \ell_A$;
in this case, 
write $A\sqsubset B$ if  also $\ell_A<\ell_B$.
\vskip.1in

\begin{assumption}\label{assump.matrixHL}
Fix a good diagonal coding antichain $\bD$ and  let $\mathcal{D}:=\mathcal{D}(\bD)$.
We will be working with  triples $(A,B,k)$,
where
$A\in\widehat{\mathcal{AD}}$,
$B\sse\widehat{\bD}$, and 
$A\sqsubset B\sse  A^+$.
%Given $M\in\mathcal{D}$ and $A\sse M$, let   $D=r_d(M)$, where $d=\depth_M(A)=d$.
Assume that 
{\bf all splitting nodes in $A$,  $B$, and $D$  are not splitting predecessors in $\bD$}.
We consider the following combinations of one of Cases (a) or (b) with one of Cases (i) or  (ii).
\vskip.1in

\begin{enumerate}
\item[]
\begin{enumerate}
\item[\bf Case (a).]
$\max(r_{k+1}(\bD))$ has a splitting node.
\end{enumerate}
\end{enumerate}

\begin{enumerate}
\item[]
\begin{enumerate}
\item[\bf Case (b).]
$\max(r_{k+1}(\bD))$
has a coding node.
%{\color{red}  I think we might be able to get rid of this requirement:   In this case, we also require that $m_0\ge \depth_D(A)+2$, where $m_0>\depth_D(A)$ is least such that $r_{m_0}(D)$ contains a member of $r_{k+1}[B,D]^*$. }
\end{enumerate}
\end{enumerate}

\begin{enumerate}
\item[]
\begin{enumerate}
\item[\bf Case (i).]
$k\ge 1$,
$A\in \mathcal{AD}_k$,
and
 $B=A^+$.
 \end{enumerate}
\end{enumerate}

\begin{enumerate}
\item[]
\begin{enumerate}
\item[\bf Case (ii).]
$k\ge 0$, $A$ has at least one node,
each member of $\max(A)$ has exactly one extension in $\max(B)$ (that is, $\max(A)\sqsubset \max(B)$), and
$A = C\rl \ell$ for some $C\in\mathcal{AD}_{k+1}$ 
 and  $\ell<\ell_C$
such that
$r_{k}(C)\sqsubseteq A$ and 
$B\sqsubseteq C$.
\end{enumerate}
\end{enumerate}
%We point out that whenever   $\max(A)$ has a coding node,  we are automatically in Case (i).
We point out that 
in Case (ii), $A$ may or may not be a member of
$\mathcal{AD}$.
\end{assumption}

The following theorem of \Erdos\ and Rado  will
be used in the proof of Theorem \ref{thm.matrixHL}.

\begin{thm}[\Erdos-Rado]\label{thm.ER}
For $r<\om$ and $\mu$ an infinite cardinal,
$$
\beth_r(\mu)^+\ra(\mu^+)_{\mu}^{r+1}.
$$
\end{thm}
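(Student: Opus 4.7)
The plan is to establish this classical theorem by induction on $r$, using the standard construction of an \emph{end-homogeneous} subset. The base case $r=0$ is the pigeonhole principle: any partition of $\mu^+$ into $\mu$ classes admits a class of cardinality $\mu^+$.

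For the inductive step, assume the theorem at $r-1$ and let $\kappa=\beth_r(\mu)^+$ with $c:[\kappa]^{r+1}\to\mu$. The key reduction is to find $H\subseteq\kappa$ with $|H|=\beth_{r-1}(\mu)^+$ that is \emph{end-homogeneous}: for every $F\in[H]^r$ and every $x,y\in H$ with $\max F<x,y$, one has $c(F\cup\{x\})=c(F\cup\{y\})$. Given such $H$, the coloring collapses to a well-defined $c':[H]^r\to\mu$ defined by $c'(F)=c(F\cup\{y\})$ for any $y\in H$ above $\max F$. Applying the inductive hypothesis to $c'$ on $H$ yields a homogeneous set $H'\subseteq H$ of cardinality $\mu^+$; by end-homogeneity, $H'$ is also homogeneous for the original $c$.

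To construct $H$, I would carry out a transfinite recursion of length $\beth_{r-1}(\mu)^+$. At stage $\alpha$, maintain a residual set $R_\alpha\subseteq\kappa$ of cardinality $\kappa$ and choose $x_\alpha=\min R_\alpha$. Writing $X_\alpha=\{x_\beta:\beta<\alpha\}$, define an equivalence relation on $R_\alpha\setminus\{x_\alpha\}$ by $y\sim z$ iff $c(F\cup\{y\})=c(F\cup\{z\})$ for every $F\in[X_\alpha\cup\{x_\alpha\}]^r$. Since $|X_\alpha|\le\beth_{r-1}(\mu)$, the number of classes is bounded by
\[
\mu^{|[X_\alpha]^r|}\le \mu^{\beth_{r-1}(\mu)}=2^{\beth_{r-1}(\mu)}=\beth_r(\mu)<\kappa.
\]
Regularity of $\kappa=\beth_r(\mu)^+$ then guarantees some class has size $\kappa$; declare it $R_{\alpha+1}$. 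Setting $H=\{x_\alpha:\alpha<\beth_{r-1}(\mu)^+\}$ produces an end-homogeneous set of the desired cardinality.

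The main obstacle is the limit stages: the naive intersection $\bigcap_{\beta<\alpha}R_\beta$ may collapse below $\kappa$. I would resolve this through a type-bookkeeping argument. The ``type'' of a point $y>\sup X_\alpha$ over $X_\alpha$ is a function $[X_\alpha]^r\to\mu$, and there are at most $\beth_r(\mu)<\kappa$ such types throughout the construction. At a limit $\alpha$, since cofinally many stages force a shrinking of residual sets but types range over a set of size $<\kappa$, some type is realized $\kappa$-often inside $\bigcap_{\beta<\alpha}R_\beta$, and that class serves as $R_\alpha$. An equivalent and slicker route is to fix an elementary submodel $M\prec H_\theta$ of cardinality $\beth_{r-1}(\mu)^+$ that contains $c$ and is closed under $\beth_{r-1}(\mu)$-sequences; choosing a single ``generic'' $x^*\in\kappa\setminus\sup(M\cap\kappa)$ and reflecting its type back into $M$ produces $H$ inside $M\cap\kappa$ in one stroke, bypassing the delicate limit-stage analysis.
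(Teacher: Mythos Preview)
The paper does not prove this theorem at all; it is quoted as the classical \Erdos--Rado theorem and then applied as a black box in the proof of Theorem~\ref{thm.matrixHL}. So there is no ``paper's proof'' to compare against.

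Your sketch is the standard argument and is essentially correct: induction on $r$, with the inductive step reducing to the construction of an end-homogeneous set of size $\beth_{r-1}(\mu)^+$, followed by an application of the inductive hypothesis. You correctly identify the limit-stage issue in the naive shrinking-residual recursion, and the elementary-submodel route you describe at the end is indeed the clean way to carry it out.

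One small slip: in the elementary-submodel version you want $|M|=\beth_r(\mu)$, not $\beth_{r-1}(\mu)^+$. The point is that $\beth_r(\mu)^{\beth_{r-1}(\mu)}=(2^{\beth_{r-1}(\mu)})^{\beth_{r-1}(\mu)}=\beth_r(\mu)$, so a model of this size closed under $\beth_{r-1}(\mu)$-sequences exists in ZFC; the analogous closure for a model of size $\beth_{r-1}(\mu)^+$ is not provable without additional cardinal-arithmetic assumptions. With $|M|=\beth_r(\mu)<\kappa$ you still have room to pick $x^*\in\kappa\setminus M$, and the recursion of length $\beth_{r-1}(\mu)^+$ inside $M$ goes through exactly as you describe.
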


We are now set up to prove the theorem which will form  the basis of the result   that all Borel sets in $\mathcal{B}(\bD)$ are Ramsey.

\begin{thm}[Extended Pigeonhole Principle]\label{thm.matrixHL}
Let $\bD$,   $(A,B,k)$, $D$, $d$ be as in Assumption 
\ref{assump.matrixHL},
where  $(A, B,k)$ satisfies one of Cases (i) or (ii) and  one of Cases (a) or (b).
Let
  $h:  r_{k+1}[D,\bD]^*\ra 2$  be a coloring.
Then  there is an
$N\in [D,\bD]^*$
such that
$h$ is monochromatic on $ r_{k+1}[B,N]^*$.
\end{thm}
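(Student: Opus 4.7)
The plan is to prove the theorem by a Harrington-style forcing argument combined with the \Erdos--Rado partition relation, following the template for Axiom \textbf{A.4} proofs in topological Ramsey space theory (Chapter 6 of \cite{TodorcevicBK10}) and its coding-tree adaptation in \cite{DobrinenRado19}. I would introduce a forcing notion $\bP$ whose conditions code finite partial specifications of a member of $[D,\bD]^*$ together with a candidate $(k+1)$-st extension of $B$. A $\bP$-generic filter yields a name $\dot{N}$ for an element of $[D,\bD]^*$ and a name $\dot{B}^*$ for the ``first'' member of $r_{k+1}[B,\dot{N}]^*$ end-extending $B$. Because every object ultimately produced is finite and every parameter is countable, $\bP$ functions purely as a bookkeeping device for performing infinitely many unbounded searches in the ground model.

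Next, consider the $\bP$-name $\dot{h}(\dot{B}^*) \in 2$, which by a standard density argument is decided on a dense set of conditions. Enumerate $\max(B^+)$ as $s_0,\dots,s_{r-1}$, and for each $r$-tuple $(t_0,\dots,t_{r-1})$ of nodes of $\bD$ lying at the relative level pattern inherited from the $s_j$, define a coloring whose value records both a Boolean value $i \in 2$ and a code for a condition forcing $\dot{h}(\dot{B}^*) = i$ when $\dot{X}_{s_j}$ is interpreted by $t_j$. Since the space of codes is countable, an application of the \Erdos--Rado theorem (Theorem \ref{thm.ER}) at exponent $r+1$, carried out inside a sufficiently rich elementary submodel and then transferred back, produces a single value $i \in 2$ together with a pool of nodes of $\bD$ above $\ell_D$ on which the coloring is constant.

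From this pool I would construct the desired $N \in [D,\bD]^*$ by a level-by-level fusion in the ground model: at each stage the next critical node of $N$ is selected from within the pool, and the Extension Property of \SDAP$^+$ is invoked to extend the remaining nodes in $\max(\cdot)^+$ to a $+$-similarity copy of the appropriate initial segment of $\bD$. Clauses (1)--(3) of the goodness of $\bD$ (Definition \ref{defn.CDABetter}) are essential here: clause (3) guarantees that every $C \in r_{k+1}[B,N]^*$ corresponds to exactly one of the tuples considered in the coloring, so the homogeneity forces $h(C) = i$; clauses (1)--(2) force the splitting predecessor of each new coding node to be canonically determined, so that in Case~(b) the extension reduces to matching a prescribed passing type rather than making a genuine choice.

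The principal obstacle is the uniform handling of Cases~(a)/(b) crossed with Cases~(i)/(ii). Case~(a) is the classical setting in which EP applies directly; Case~(b) requires the goodness clauses to eliminate ambiguity in the passing type over $\bM_{n+1}$, since otherwise there would be insufficiently many candidates in the pool to carry out the fusion. The (i)/(ii) distinction affects only which subset of $\max(B)$ actually participates in the fusion step and does not alter the partition calculus. Verifying that the fusion yields a genuine member of $\mathcal{D}$ (not merely a similarity copy) and that the $+$-similarity types are preserved throughout is the delicate bookkeeping the proof must sustain.
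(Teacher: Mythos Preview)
Your high-level outline---forcing plus \Erdos--Rado plus a ground-model fusion---is indeed the template, but the concrete mechanism you describe has a genuine gap.

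The difficulty is that $N$ must make $h$ constant on \emph{every} member of $r_{k+1}[B,N]^*$, and at each level of the fusion there are many different $r$-tuples end-extending $\max(B)$, not just one. Your forcing produces a single name $\dot{B}^*$ for ``the'' extension, and your \Erdos--Rado step (at exponent $r+1$, on a coloring of $r$-tuples of nodes) yields a pool of nodes on which the code of the deciding condition is constant. But constancy of the code does \emph{not} by itself guarantee that the deciding conditions for two different $r$-tuples drawn from the pool are compatible; without compatibility you cannot force them simultaneously, and the fusion cannot homogenize all the tuples occurring at a given level of $N$. Nothing in your sketch supplies this compatibility.

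The paper's proof solves this by a different forcing and a different partition step. Letting $\mathbf{i}+1=|\max(B)|$, the forcing $\bP$ adds not a single $\dot{N}$ but $\kappa=\beth_{2\mathbf{i}}$ many generic branches through each $T_i$ ($i<\mathbf{i}$) and one through $T_{\mathbf{i}}$, with conditions indexed by finite $\vec{\delta}\subseteq\kappa$. For each $\vec{\alpha}\in[\kappa]^{\mathbf{i}}$ one fixes a condition $p_{\vec{\alpha}}$ deciding the color along the branches labelled by $\vec{\alpha}$. \Erdos--Rado is then applied to $[\kappa]^{2\mathbf{i}}$---the doubling is the essential trick---with the coloring recording not just the color and the range of $p_{\vec{\alpha}}$ but also the \emph{overlap pattern} $\{\langle j,k\rangle:\delta_{\vec{\alpha}}(j)=\delta_{\vec{\beta}}(k)\}$ between pairs of index sequences. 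Homogeneity at exponent $2\mathbf{i}$ forces this overlap data to be trivial (Lemma~\ref{lem.j=j'}), which in turn shows that $\{p_{\vec{\alpha}}:\vec{\alpha}\in\vec{J}\}$ is pairwise compatible for any finite $\vec{J}$ drawn from the homogeneous set (Lemma~\ref{lem.compat}). Only after this compatibility is secured can one, at each fusion step, label the current level's extensions by ordinals from the homogeneous set, take a common refinement of the finitely many $p_{\vec{\alpha}}$ involved, and read off a level set on which every candidate tuple has the fixed color. Your exponent $r+1$ and your single-extension name miss this doubling/compatibility mechanism entirely.
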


\begin{proof}
Assume the hypotheses.
Let $\mathbf{i}+1$ be the number of  nodes in  $\max(B)$,
and  fix an enumeration   $s_0,\dots, s_{\mathbf{i}}$  of
 the nodes in    $\max(B)$
 with the property that  for any
$C\in r_{k+1}[B,\bD]^*$,
the critical  node in $\max(C)$ extends $s_{\mathbf{i}}$.
Note that in Case (b), $\mathbf{i}$ must be at least two. 
Let $d$ denote the integer such that $D\in\mathcal{AD}_d$, and let
$I$ denote the set  of all  
$n>d$  
such that for some (equivalently, all) $M\in [D,\bD]$
there is 
 a member
$C\in r_{k+1}[B,M]^*$ 
with $\depth_M(C)=n$.
Let $L$ denote
the set 
$\{\ell_{r_n(M)}: M\in [D,\bD]$ and $n\in I\}$.
In  Case (b),
for  $\ell\in L$ we 
let $\ell'$ denote the length of the splitting predecessor in $\bD$ of the coding node  in $\bD$ of length $\ell$, and let $L'=\{\ell':\ell\in L\}$.

Given $U\in \mathcal{AD}\cup \mathcal{D}$ with $D\sqsubseteq U$, define  the set $\Ext_U(B)$ as follows:
In Case (a), let 
$\Ext_U(B)$
consist of those  level
sets $X\sse U$  such that 
$X=\max(C)$  for some  $C\in r_{k+1}[B,\bD]^*$,  
where the splitting node in $X$ is  not a  splitting predecessor in $\bD$.
In Case (b),
let 
$\Ext_U(B)$
consist of those 
sets $X\sse U$ 
such that 
$X$ consists of the non-coding  nodes in
 $\max(C)$
along with the splitting predecessor in $\bD$ of the coding node in $\max(C)$,
 for some  $C\in r_{k+1}[B,\bD]^*$.
We will simply  write $\Ext(B)$  to mean $\Ext_{\bD}(B)$.

The coloring $h$ induces a coloring $h': \Ext(B)\ra 2$ as follows:
For  $X\in  \Ext(B)$,
in Case (a)
define
$h'(X)=h(C)$, where
$C$ is the member of $r_{k+1}[B,\bD]^*$
such that $X=\max(C)$.
In Case (b),
define $h'(X)=h(C)$, where
$C$ is the member of  $r_{k+1}[B,\bD]^*$
such that
$X=(\max(C)\setminus\{c\})\cup\{\splitpred(c)\}$, where $c$ denotes the coding node in $\max(C)$.

For
  $i\le \mathbf{i}$,   let  $T_i=\{t\in \widehat{\bD}:t\contains s_i\}$.
Let $\kappa=\beth_{2\mathbf{i}}$, so that the partition relation $\kappa\ra (\aleph_1)^{2\mathbf{i}}_{\aleph_0}$ holds by the \Erdos-Rado Theorem \ref{thm.ER}.
The following forcing notion   adds $\kappa$ many paths through  each  $T_i$,  $i< \mathbf{i}$,
and one path  through $T_{\mathbf{i}}$.

In  both Cases (a) and (b),
define
$\bP$ to be  the set of finite partial functions $p$   such that
\begin{enumerate}
\item
$\dom(p)=\{\mathbf{i}\}\times \vec{\delta}_p$, where $\vec{\delta}_p$ is a finite subset of $\kappa$;
 \item
 $p(\mathbf{i})\in T_{\mathbf{i}}$ and 
 $\{p(i,\delta) : \delta\in  \vec{\delta}_p\}\sse  T_i$ for each $i<\mathbf{i}$;
%, and  there is some $\ell_p$ such that  $|p(i,\delta)|=\ell_p$ for all $(i,\delta)\in d\times  \vec{\delta}_p$;
\item
 For  any  choices of $\delta_i\in\vec{\delta}_p$,  $i<\mathbf{i}$,
 the  set $\{p(i,\delta_i):i<\mathbf{i}\}\cup\{p(\mathbf{i})\}$ is a member of $\Ext(B)$.
 \end{enumerate}
Let $\ell_p$ denote the maximal length of nodes in $\ran(p)$.
All nodes in $\ran(p)$ will have length $\ell_p$
except
in Case (b),
where 
 the splitting predecessor $p(\mathbf{i})$    will have length $\ell_p'$.

The partial
 ordering on $\bP$  is just reverse inclusion:
$q\le p$ if and only if  $\ell_q\ge \ell_p$,  $\vec{\delta}_q\contains \vec{\delta}_p$,
$q(\mathbf{i})\contains p(\mathbf{i})$, and
$q(i,\delta)\contains p(i,\delta)$ for each $(i,\delta)\in \mathbf{i}\times \vec{\delta}_p$.
It is routine to check that    $(\bP,\le)$ is a separative, atomless partial order.

We point out that condition (3) in the definition of $\bP$  is easy to satisfy since 
$\bK$ has SDAP$^+$:
Let  $C$ be any member of $ r_{k+1}[B,\bD]^*$  and let 
$\lgl t_i:i\le \mathbf{i}\rgl$ enumerate $\max(C)$ so that
so that each $t_i$ extends $s_i$.
In Case (a),
each $p(i,\delta)$ only need be a node in $T_i$ of length $\ell_p$.
If   (1) of the Extension Property holds for 
$\bK$, 
then  $p(\mathbf{i})$ just needs to be a splitting node in $T_{\mathbf{i}}$ which is not a splitting predecessor;
if (2) of the Extension Property holds,
it suffices for $p(\mathbf{i})$ to additionally satisfy $\psi(p(\mathbf{i}))=\psi(t_{\mathbf{i}})$.
In Case (b),
$p(\mathbf{i})$ need only be the splitting predecessor of some  coding node $c$  in $T_{\mathbf{i}}$, and 
each $p(i,\delta)$ need only be a node in $T_i$ of length $\ell_p$ such that $p(i,\delta)^+(c)\sim t_i^+(t_{\mathbf{i}})$.

Given $p\in\bP$,
 the  range of $p$ is  the set
$$
\ran(p)=\{p(i,\delta):(i,\delta)\in \mathbf{i}\times \vec\delta_p\}\cup \{p(\mathbf{i})\}.
$$
If also $q\in \bP$ and $q\le p$, then
 we let 
\begin{equation}
\ran(q)\re \dom (p)=\{q(i,\delta):(i,\delta)\in \mathbf{i}\times \vec{\delta}_p\}\cup \{q(\mathbf{i})\}.
\end{equation}
For  $(i,\al)\in \mathbf{i}\times \kappa$,
 let
\begin{equation}
 \dot{b}_{i,\al}=\{\lgl p(i,\al),p\rgl:p\in \bP\mathrm{\  and \ }\al\in\vec{\delta}_p\},
\end{equation}
 a $\bP$-name for the $\al$-th generic branch through $T_i$.
Let
\begin{equation}
\dot{b}_{\mathbf{i}}=\{\lgl p(\mathbf{i}),p\rgl:p\in\bP\},
\end{equation}
a $\bP$-name for the generic branch through $M_{\mathbf{i}}$.
 Given a generic filter   $G\sse \bP$, notice that
 $\dot{b}_{\mathbf{i}}^G=\{p(\mathbf{i}):p\in G\}$,
 which  is a cofinal path   in $T_{\mathbf{i}}$.
We point out that given  $p\in \bP$,
\begin{equation}
 p\forces  \forall
 (i,\al)\in \mathbf{i}\times \vec\delta_p\, (\dot{b}_{i,\al}\re \ell_p= p(i,\al)).
\end{equation}
Furthermore, in Case (a),
 $p\forces (\dot{b}_{\mathbf{i}}\re \ell_p=p(\mathbf{i}))$,
  while in Case (b),
 $p\forces (\dot{b}_{\mathbf{i}}\re
  \ell'_p=p(\mathbf{i}))$.

 Let $\dot{G}$ be the $\bP$-name for a generic filter, and let $\dot{L}_G$ be a $\bP$-name for the set of lengths  $\{\ell_p: p\in\dot{G}\}$.
 Note that
 $\bP$ forces that $\dot{L}_G\sse L$.
Let $\dot{\mathcal{U}}$ be a $\bP$-name for a non-principal ultrafilter on $\dot{L}_G$.

We will write sets $\{\al_i:i< \mathbf{i}\}$ in $[\kappa]^{\mathbf{i}}$ as vectors $\vec{\al}=\lgl \al_0,\dots,\al_{\mathbf{i}-1}\rgl$ in strictly increasing order.
For $\vec{\al}\in[\kappa]^{\mathbf{i}}$,  let
\begin{equation}
\dot{b}_{\vec{\al}}=
\lgl \dot{b}_{0,\al_0},\dots, \dot{b}_{\mathbf{i}-1,\al_{\mathbf{i}-1}},\dot{b}_{\mathbf{i}}\rgl.
\end{equation}
For  $\ell\in L$,
in Case (a)  let
 \begin{equation}
 \dot{b}_{\vec\al}\re \ell=
 \lgl \dot{b}_{0,\al_0}\re \ell,\dots, \dot{b}_{\mathbf{i}-1,\al_{\mathbf{i}-1}}\re \ell,\dot{b}_{\mathbf{i}}\re \ell\rgl;
 \end{equation}
 and in Case (b),
 let
  \begin{equation}
 \dot{b}_{\vec\al}\re \ell=
 \lgl \dot{b}_{0,\al_0}\re \ell,\dots, \dot{b}_{\mathbf{i}-1,\al_{\mathbf{i}-1}}\re \ell,\dot{b}_{\mathbf{i}}\re \ell'\rgl.
 \end{equation}
 Note that
$h'$  is a coloring on
 $\dot{b}_{\vec\al}\re \ell$
whenever this is
 forced to be a member of $\Ext_M(B)$.
Given $\vec{\al}\in [\kappa]^{\mathbf{i}}$ and  $p\in \bP$ with $\vec\al\sse\vec{\delta}_p$,
let
\begin{equation}
X(p,\vec{\al})=\{p(i,\al_i):i<\mathbf{i}\}\cup\{p(\mathbf{i})\}.
\end{equation}

%{\color{blue}  This next section is by now becoming a quite standard argument.}

For each $\vec\al\in[\kappa]^{\mathbf{i}}$,
choose a condition $p_{\vec{\al}}\in\bP$ satisfying the following:
\begin{enumerate}
\item
 $\vec{\al}\sse\vec{\delta}_{p_{\vec\al}}$.
\item
There is an $\varepsilon_{\vec{\al}}\in 2$
 such that
$p_{\vec{\al}}\forces$
``$h'(\dot{b}_{\vec{\al}}\re \ell)=\varepsilon_{\vec{\al}}$
for $\dot{\mathcal{U}}$ many $\ell$ in $\dot{L}_G$''.
\item
$h'(X(p_{\vec\al},\vec{\al}))=\varepsilon_{\vec{\al}}$.
\end{enumerate}

Such conditions can be found as follows:
Fix some $X\in \Ext(B)$ and let $x_i$ denote the node in $X$ extending $s_i$, for each $i\le \mathbf{i}$.
For $\vec{\al}\in[\kappa]^{\mathbf{i}}$,  define
$$
p^0_{\vec{\al}}=\{\lgl (i,\delta), x_i\rgl: i< \mathbf{i}, \ \delta\in\vec{\al} \}\cup\{\lgl \mathbf{i},x_{\mathbf{i}}\rgl\}.
$$
Then
 (1) will  hold for all $p\le p^0_{\vec{\al}}$,
 since $\vec\delta_{p_{\vec\al}^0}= \vec\al$.
 Next,
let  $p^1_{\vec{\al}}$ be a condition below  $p^0_{\vec{\al}}$ which
forces  $h'(\dot{b}_{\vec{\al}}\re \ell)$ to be the same value for
$\dot{\mathcal{U}}$  many  $\ell\in \dot{L}_G$.
Extend this to some condition
 $p^2_{\vec{\al}}\le p_{\vec{\al}}^1$
 which
 decides a value $\varepsilon_{\vec{\al}}\in 2$
 so that
 $p^2_{\vec{\al}}$ forces
  $h'(\dot{b}_{\vec{\al}}\re \ell)=\varepsilon_{\vec{\al}}$
for $\dot{\mathcal{U}}$ many $\ell$ in $\dot{L}_G$.
Then
 (2) holds for all $p\le p_{\vec\al}^2$.
If $ p_{\vec\al}^2$ satisfies (3), then let $p_{\vec\al}=p_{\vec\al}^2$.
Otherwise,
take  some  $p^3_{\vec\al}\le p^2_{\vec\al}$  which forces
$\dot{b}_{\vec\al}\re \ell\in \Ext(B)$ and
$h'(\dot{b}_{\vec\al}\re \ell)=\varepsilon_{\vec\al}$
for
some $\ell\in\dot{L}_G$
 with
$\ell_{p^2_{\vec\al}}< \ell\le \ell_{p^3_{\vec\al}}$;
then $h'(X(p^3_{\vec\al}\re \ell,\vec\al))
=\varepsilon_{\vec\al}$.
Thus, letting
$p_{\vec\al}$ be $p^3_{\vec\al}\re \ell$, we see that
 $p_{\vec\al}$ satisfies (1)--(3).

Let $\mathcal{I}$ denote the collection of all functions $\iota: 2\mathbf{i}\ra 2\mathbf{i}$ such that
for each $i<\mathbf{i}$,
$\{\iota(2i),\iota(2i+1)\}\sse \{2i,2i+1\}$.
For $\vec{\theta}=\lgl \theta_0,\dots,\theta_{2\mathbf{i}-1}\rgl\in[\kappa]^{2\mathbf{i}}$,
$\iota(\vec{\theta}\,)$ determines the pair of sequences of ordinals $\lgl \iota_e(\vec{\theta}\,),\iota_o(\vec{\theta}\,)\rgl$, where
\begin{align}
\iota_e(\vec{\theta}\,)&=
\lgl \theta_{\iota(0)},\theta_{\iota(2)},\dots,\theta_{\iota(2\mathbf{i}-2))}\rgl\cr
\iota_o(\vec{\theta}\,)&=
 \lgl\theta_{\iota(1)},\theta_{\iota(3)},\dots,\theta_{\iota(2\mathbf{i}-1)}\rgl.
 \end{align}

We now proceed to  define a coloring  $f$ on
$[\kappa]^{2\mathbf{i}}$ into countably many colors.
Let $\vec{\delta}_{\vec\al}$ denote $\vec\delta_{p_{\vec\al}}$,
 $k_{\vec{\al}}$ denote $|\vec{\delta}_{\vec\al}|$,
$\ell_{\vec{\al}}$ denote  $\ell_{p_{\vec\al}}$, and let $\lgl \delta_{\vec{\al}}(j):j<k_{\vec{\al}}\rgl$
denote the enumeration of $\vec{\delta}_{\vec\al}$
in increasing order.
Given  $\vec\theta\in[\kappa]^{2\mathbf{i}}$ and
 $\iota\in\mathcal{I}$,   to reduce subscripts
 let
$\vec\al$ denote $\iota_e(\vec\theta\,)$ and $\vec\beta$ denote $\iota_o(\vec\theta\,)$, and
define
\begin{align}\label{eq.fiotatheta}
f(\iota,\vec\theta\,)= \,
&\lgl \iota, \varepsilon_{\vec{\al}}, k_{\vec{\al}}, p_{\vec{\al}}(\mathbf{i}),
\lgl \lgl p_{\vec{\al}}(i,\delta_{\vec{\al}}(j)):j<k_{\vec{\al}}\rgl:i< \mathbf{i}\rgl,\cr
& \lgl  \lgl i,j \rgl: i< \mathbf{i},\ j<k_{\vec{\al}},\ \mathrm{and\ } \delta_{\vec{\al}}(j)=\al_i \rgl,\cr
&\lgl \lgl j,k\rgl:j<k_{\vec{\al}},\ k<k_{\vec{\beta}},\ \delta_{\vec{\al}}(j)=\delta_{\vec{\beta}}(k)\rgl\rgl.
\end{align}
Fix some ordering of $\mathcal{I}$ and define
\begin{equation}
f(\vec{\theta}\,)=\lgl f(\iota,\vec\theta\,):\iota\in\mathcal{I}\rgl.
\end{equation}

By the \Erdos-Rado Theorem  \ref{thm.ER},  there is a subset $K\sse\kappa$ of cardinality $\aleph_1$
which is homogeneous for $f$.
Take $K'\sse K$ so that between each two members of $K'$ there is a member of $K$.
Then take  $K_i\sse K'$ satisfying
  $K_0<\dots<K_{\mathbf{i}-1}$,
where $K_i<K_{i+1}$ means that each ordinal in $K_i$ is less than each ordinal in $K_{i+1}$.
Let $\vec{K}$ denote $\prod_{i<\mathbf{i}}K_i$.

Fix some  $\vec\gamma\in \vec{K}$, and define
\begin{align}\label{eq.star}
&\varepsilon_*=\varepsilon_{\vec\gamma},\ \
k_*=k_{\vec\gamma},\ \
t_{\mathbf{i}}=p_{\vec\gamma}(\mathbf{i}),\cr
t_{i,j}&=p_{\vec{\gamma}}(i,\delta_{\vec{\gamma}}(j))\mathrm{\ for\ }
i<\mathbf{i},\ j<k_*.
\end{align}
The next three lemmas show that the
 values  in equation (\ref{eq.star}) are the same for any choice of
$\vec\gamma$ in $\vec{K}$.

\begin{lem}\label{lem.onetypes}
 For all $\vec{\al}\in \vec{K}$,
 $\varepsilon_{\vec{\al}}=\varepsilon_*$,
$k_{\vec\al}=k_*$,  $p_{\vec{\al}}(\mathbf{i})=t_\mathbf{i}$, and
$\lgl p_{\vec\al}(i,\delta_{\vec\al}(j)):j<k_{\vec\al}\rgl
=
 \lgl t_{i,j}: j<k_*\rgl$ for each $i< \mathbf{i}$.
\end{lem}

\begin{proof}
Let
 $\vec{\al}$ be any member of $\vec{K}$, and let $\vec{\gamma}$ be the set of ordinals fixed above.
Take  $\iota\in \mathcal{I}$
to be the identity function on $2\mathbf{i}$.
Then
there are $\vec\theta,\vec\theta'\in [K]^{2\mathbf{i}}$
such that
$\vec\al=\iota_e(\vec\theta\,)$ and $\vec\gamma=\iota_e(\vec\theta'\,)$.
Since $f(\iota,\vec\theta\,)=f(\iota,\vec\theta'\,)$,
it follows that $\varepsilon_{\vec\al}=\varepsilon_{\vec\gamma}$, $k_{\vec{\al}}=k_{\vec{\gamma}}$, $p_{\vec{\al}}(\mathbf{i})=p_{\vec{\gamma}}(\mathbf{i})$,
and $\lgl \lgl p_{\vec{\al}}(i,\delta_{\vec{\al}}(j)):j<k_{\vec{\al}}\rgl:i< \mathbf{i}\rgl
=
\lgl \lgl p_{\vec{\gamma}}(i,\delta_{\vec{\gamma}}(j)):j<k_{\vec{\gamma}}\rgl:i< \mathbf{i}\rgl$.
\end{proof}

Let $\ell_*$ denote the length of the  nodes
 $t_{i,j}$, $(i,j)\in d\times k_*$.
 In Case (a), $|t_{\mathbf{i}}|$ also equals $\ell_*$; in Case (b), let $\ell_*'$ denote 
$|t_{\mathbf{i}}|$.

\begin{lem}\label{lem.j=j'}
Given any $\vec\al,\vec\beta\in \vec{K}$,
if $j,k<k_*$ and $\delta_{\vec\al}(j)=\delta_{\vec\beta}(k)$,
 then $j=k$.
\end{lem}

\begin{proof}
Let $\vec\al,\vec\beta$ be members of $\vec{K}$   and suppose that
 $\delta_{\vec\al}(j)=\delta_{\vec\beta}(k)$ for some $j,k<k_*$.
For  $i<\mathbf{i}$, let  $\rho_i$ be the relation from among $\{<,=,>\}$ such that
 $\al_i\,\rho_i\,\beta_i$.
Let   $\iota$ be the member of  $\mathcal{I}$  such that for each $\vec\theta\in[K]^{2\mathbf{i}}$ and each $i<\mathbf{i}$,
$\theta_{\iota(2i)}\ \rho_i \ \theta_{\iota(2i+1)}$.
Fix some
$\vec\theta\in[K']^{2\mathbf{i}}$ such that
$\iota_e(\vec\theta)=\vec\al$ and $\iota_o(\vec\theta)= \vec\beta$.
Since between any two members of $K'$ there is a member of $K$, there is a
 $\vec\zeta\in[K]^{\mathbf{i}}$ such that  for each $i< \mathbf{i}$,
 $\al_i\,\rho_i\,\zeta_i$ and $\zeta_i\,\rho_i\, \beta_i$.
Let   $\vec\mu,\vec\nu$ be members of $[K]^{2\mathbf{i}}$ such that $\iota_e(\vec\mu)=\vec\al$,
$\iota_o(\vec\mu)=\vec\zeta$,
$\iota_e(\vec\nu)=\vec\zeta$, and $\iota_o(\vec\nu)=\vec\beta$.
Since $\delta_{\vec\al}(j)=\delta_{\vec\beta}(k)$,
the pair $\lgl j,k\rgl$ is in the last sequence in  $f(\iota,\vec\theta)$.
Since $f(\iota,\vec\mu)=f(\iota,\vec\nu)=f(\iota,\vec\theta)$,
also $\lgl j,k\rgl$ is in the last  sequence in  $f(\iota,\vec\mu)$ and $f(\iota,\vec\nu)$.
It follows that $\delta_{\vec\al}(j)=\delta_{\vec\zeta}(k)$ and $\delta_{\vec\zeta}(j)=\delta_{\vec\beta}(k)$.
Hence, $\delta_{\vec\zeta}(j)=\delta_{\vec\zeta}(k)$,
and therefore $j$ must equal $k$.
\end{proof}

For each $\vec\al\in \vec{K}$, given any
$\iota\in\mathcal{I}$, there is a $\vec\theta\in[K]^{2\mathbf{i}}$ such that $\vec\al=\iota_o(\vec\al)$.
By the second line of equation  (\ref{eq.fiotatheta}),
there is a strictly increasing sequence
$\lgl j_i:i< \mathbf{i}\rgl$  of members of $k_*$ such that
$\delta_{\vec\gamma}(j_i)=\al_i$.
By
homogeneity of $f$,
this sequence $\lgl j_i:i< \mathbf{i}\rgl$  is the same for all members of $\vec{K}$.
Then letting
 $t^*_i$ denote $t_{i,j_i}$,
 one sees that
\begin{equation}
p_{\vec\al}(i,\al_i)=p_{\vec{\al}}(i, \delta_{\vec\al}(j_i))=t_{i,j_i}=t^*_i.
\end{equation}
Let $t_{\mathbf{i}}^*$ denote $t_{\mathbf{i}}$.

\begin{lem}[Homogeneity of $\{p_{\vec{\al}}:\vec{\al}\in \vec{K}\,\}$]\label{lem.compat}
For any finite subset $\vec{J}\sse \vec{K}$,
$p_{\vec{J}}:=\bigcup\{p_{\vec{\al}}:\vec{\al}\in \vec{J}\,\}$
is a member of $\bP$ which is below each
$p_{\vec{\al}}$, $\vec\al\in\vec{J}$.
\end{lem}

\begin{proof}
Given  $\vec\al,\vec\beta\in \vec{J}$,
if
 $j,k<k_*$ and
 $\delta_{\vec\al}(j)=\delta_{\vec\beta}(k)$, then
 $j$ and $k$ must be equal, by
 Lemma  \ref{lem.j=j'}.
Then  Lemma \ref{lem.onetypes} implies
that for each $i<\mathbf{i}$,
\begin{equation}
p_{\vec\al}(i,\delta_{\vec\al}(j))=t_{i,j}=p_{\vec\beta}(i,\delta_{\vec\beta}(j))
=p_{\vec\beta}(i,\delta_{\vec\beta}(k)).
\end{equation}
Hence,
 for all
$\delta\in\vec{\delta}_{\vec\al}\cap
\vec{\delta}_{\vec\beta}$
and  $i<\mathbf{i}$,
$p_{\vec\al}(i,\delta)=p_{\vec\beta}(i,\delta)$.
Thus,
$p_{\vec{J}}:=
\bigcup \{p_{\vec{\al}}:\vec\al\in\vec{J}\}$
is a  function with $\vec{\delta}_{p_{\vec{J}}}=
\bigcup\{
\vec{\delta}_{\vec\al}:
\vec\al\in\vec{J}\,\}$; hence,
$p_{\vec{J}}$ is a member of $\bP$.
Since
for each $\vec\al\in\vec{J}$,
$\ran(p_{\vec{J}}\re \vec{\delta}_{\vec\al})=\ran(p_{\vec\al})$,
it follows that
$p_{\vec{J}}\le p_{\vec\al}$ for each $\vec\al\in\vec{J}$.
\end{proof}

%{\color{blue}  End of standard issue part. }

We now proceed to  build
an
 $N\in [D,\bD]$
so that  the coloring
$h'$ will be  monochromatic on $\Ext_N(B)$,
from which it will follow that $h$ is  monochromatic in $r_{k+1}[B,N]^*$.
 Set
\begin{equation}\label{eq.U*}
U^*=\{t^*_i:i\le \mathbf{i}\}\cup\{u^*:u\in \max(D)^+\setminus B\},
\end{equation}
where  for each $u$
 in $\max(D)^+\setminus B$,
$u^*$ is some extension of 
  $u^*$ in $\bD\re \ell_*$.
Then
 $U^*$
end-extends   $\max(D)^+$.
 One may take each $u^*$ to be the  $\prec$-leftmost extension of $u$ to be deterministic, but \SDAP\ implies that any extensions will suffice.
(Since $t^*_{\mathbf{i}}$ is either a splitting node or a splitting predecessor,
all possible choices of $u^*$ for $u\in \max(D)^+\setminus B$ are are automatically never  splitting nodes nor splitting predecessors nor coding nodes in $\bD$.)

Let $\{ n_j:j\in \mathbb{N}\}$ be the strictly increasing enumeration of $I$, and 
note  that $n_0> d$.
From now on, Cases  (a) and (b) are  different enough to warrant  separate treatment.
\vskip.1in

\noindent {\bf Case (a).}
If $n_0=d+1$,
then
$D\cup U^*$ is a member of $r_{n_0}[D,\bD]$.
In this case, we let  $U_{n_0}=D\cup U^*$,
and
 let $U_{n_1-1}$ be any member of $r_{n_1-1}[U_{n_0},\bD]$,
 noting that
    $U^*$ is  the only member of $\Ext_{U_{n_0}}(B)$ and that
$h'(U^*)=\varepsilon_*$.
Otherwise,
 $n_0>d+1$.
 In this case,
 take some  $U_{n_0-1}\in r_{n_0-1}[D,\bD]$ such that
 $\max(U_{n_0-1})$ end-extends $U^*$,
 and
 notice that   $\Ext_{U_{n_0-1}}(B)$ is empty.

Now assume that   $j\ge 0$ and 
 we have constructed $U_{n_j-1}\in r_{n_j-1}[D,\bD]$
  so that every member of $\Ext_{U_{n_j-1}}(B)$
 has $h'$-color $\varepsilon_*$.
Fix some  $E\in r_{n_j}[U_{n_j-1} ,\bD]$ and let $Y=\max(E)$.
We will extend the nodes in $Y$  to construct
$U_{n_j}\in r_{n_j}[U_{n_j-1},\bD]$
with the property that all members of $\Ext_{U_{n_j}}(B)$ have the same
 $h$-value $\varepsilon_*$.
This will be achieved  by constructing
 the condition $q\in\bP$, below, and then extending it to some condition $r \le q$ which decides that  all members of
 $\Ext(B)$ coming from the nodes in $\ran(r)$ have $h$-color $\varepsilon_*$.

Let $q(\mathbf{i})$ denote  the  splitting node  in $Y$ and let $\ell_q=\ell_Y$.
For each $i<\mathbf{i}$,
let  $Y_i$ denote  $Y\cap T_i$,
and 
let $J_i\sse K_i$ be a set of  the same cardinality as $Y_i$
and label the members of $Y_i$ as
$\{z_{\al}:\al\in J_i\}$.
Let $\vec{J}$ denote $\prod_{i<\mathbf{i}}J_i$, and
note
that for each $\vec\al\in\vec{J}$,
the set $\{z_{\al_i}:i<\mathbf{i}\}\cup\{q(\mathbf{i})\}$ is a member of $\Ext(B)$.
By   Lemma \ref{lem.compat},
the set $\{p_{\vec\al}:\vec\al\in\vec{J}\}$ is compatible,  and
$p_{\vec{J}}:=\bigcup\{p_{\vec\al}:\vec\al\in\vec{J}\}$ is a condition in $\bP$.

Let
 $\vec{\delta}_q=\bigcup\{\vec{\delta}_{\vec\al}:\vec\al\in \vec{J}\}$.
For $i<\mathbf{i}$ and $\al\in J_i$,
define $q(i,\al)=z_{\al}$.
It follows  that for each
$\vec\al\in \vec{J}$ and $i<\mathbf{i}$,
\begin{equation}
q(i,\al_i)\contains t^*_i=p_{\vec\al}(i,\al_i)=p_{\vec{J}}(i,\al_i),
\end{equation}
and
\begin{equation}
q(\mathbf{i})\contains t^*_{\mathbf{i}}=p_{\vec\al}(\mathbf{i})=p_{\vec{J}}(\mathbf{i}).
\end{equation}
For   $i<\mathbf{i}$ and $\delta\in\vec{\delta}_q\setminus
J_i$,
 let
$q(i,\delta)$ be  any   node in $\bD\re \ell_q$
extending
 $p_{\vec{J}}(i,\delta)$.
Define
\begin{equation}
q=\{q(\mathbf{i})\}\cup \{\lgl (i,\delta),q(i,\delta)\rgl: i<\mathbf{i},\  \delta\in \vec{\delta}_q\}.
\end{equation}
This $q$ is a condition in $\bP$, and $q\le p_{\vec{J}}$.

Take an $r\le q$ in  $\bP$ which  decides some $\ell$ in $\dot{L}_G$ for which   $h'(\dot{b}_{\vec\al}\re \ell)=\varepsilon_*$, for all $\vec\al\in\vec{J}$.
Without loss of generality,
 we may assume that $\ell_r=\ell$.
Since
$r$ forces $\dot{b}_{\vec{\al}}\re \ell=X(r,\vec\al)$
for each $\vec\al\in \vec{J}$,
and since the coloring $h'$ is defined in the ground model,
it follows that
$h'(X(r,\vec\al))=\varepsilon_*$ for each $\vec\al\in \vec{J}$.
Let
\begin{equation}
Y'=\{q(\mathbf{i})\}\cup \{q(i,\al):i<\mathbf{i},\ \al\in J_i\},
\end{equation}
 and let
\begin{equation}
Z'=\{r(\mathbf{i})\}\cup \{r(i,\al):i<\mathbf{i},\ \al\in J_i\}.
\end{equation}

Let $Z$ be the level set
consisting of  the nodes in $Z'$ along with
a  node  $z_y$ in $\bD\re \ell$ extending  $y$, for each
 $y\in
Y\setminus Y'$.
Then $Z$ end-extends $Y$,
and moreover,
letting $U_{n_j}=U_{n_j-1}\cup Z$,
we see that $U_{n_j}$ is a member of $r_{n_j}[U_{n_j-1},\bD]$ such that
$h'$ has value $\varepsilon_*$ on $\Ext_{U_{n_j}}(B)$.
\vskip.1in

\noindent {\bf Case (b).}
Notice that in this case,
 $n_0$ must be at least $d+2$ and that $t^*_{\mathbf{i}}$ is the splitting predecessor of 
 the coding node in $\bD\re \ell_*$, which we shall denote by $c^*$.
Let $U^*$ be as in equation (\ref{eq.U*}).
In Case (b),
all nodes in $U^*$ have length $\ell_*$  except for $t^*_{\mathbf{i}}$, which has length $\ell'_*$.
There is exactly one non-terminal (i.e.\ non-coding) node in $\bD\re \ell_*$ extending $t^*_{\mathbf{i}}$; denote this node by $u^*_{\mathbf{i}}$.
 If $n_0=d+2$,
let $U_{n_0}$ be the tree induced by  $D\cup U^*\cup\{u_{\mathbf{i}}^*,c^*\}$.
Then let $U_{n_1-2}$ be any member of $r_{n_1-2}[U_{n_0},\bD]$.

 If $n_0>d+2$,
the same argument will 
 handle the base case and the induction step.
For the base case,
 let $E$ be a member of $r_{n_0}[D,\bD]$ such that
 $E\re\ell_*$    equals
  $(U^*\setminus\{t^*_{\mathbf{i}}\})\cup\{u^*_{\mathbf{i}}\}$.
(In particular, $\ell_*<\ell_{r_{d+1}(U_{n_0})}$.)
For   $j\ge 1$, supposing
 we have constructed $U_{n_j-2}\in r_{n_j-2}[U_{n_{j-1}},\bD]$
  so that every member of $\Ext_{U_{n_j-2}}(B)$
 has $h'$-color $\varepsilon_*$,
let  $E$ be any member of $r_{n_j}[U_{n_j-2} ,D]$.
In each of these  two cases, 
let $c^E$ denote the coding node in $\max(E)$,
and let  $Y$ denote the set $\max(E)$
but with the two extensions of $\splitpred(c^E)$ in $\max(E)$ deleted and replaced by $\splitpred(c^E)$.

Let 
$\ell_q=\ell_E$, 
and let $q(\mathbf{i})$ denote $\splitpred(c^E)$.
For each $i<\mathbf{i}$,
let  $Y_i$ denote  the set of nodes $y\in Y\cap T_i$
 such that $y$ is a member of  some $X\in \Ext_E(B)$.
Equivalently,
$Y_i$ is the set of those $y\in Y\cap T_i$
such that
$y^+(c^E)\sim (t^*_i)^+(c^*)$.
For each $i<\mathbf{i}$,
take  a set $J_i\sse K_i$ of the same  cardinality as     $Y_i$
and label the members of $Y_i$ as
$\{z_{\al}:\al\in J_i\}$.
Let $\vec{J}$ denote
$\prod_{i<\mathbf{i}}J_i$,
noting that for each $\vec\al\in\vec{J}$,
$\{z_{\al_i}:i<\mathbf{i}\}\cup\{q(\mathbf{i})\}$ is a member of $\Ext(B)$.
By   Lemma \ref{lem.compat},
the set $\{p_{\vec\al}:\vec\al\in\vec{J}\}$ is compatible,  and
$p_{\vec{J}}:=\bigcup\{p_{\vec\al}:\vec\al\in\vec{J}\}$ is a condition in $\bP$.

Let
 $\vec{\delta}_q=\bigcup\{\vec{\delta}_{\vec\al}:\vec\al\in \vec{J}\}$.
For $i<\mathbf{i}$ and $\al\in J_i$,
define $q(i,\al)=z_{\al}$.
It follows  that for each
$\vec\al\in \vec{J}$ and $i<\mathbf{i}$,
\begin{equation}
q(i,\al_i)\contains t^*_i=p_{\vec\al}(i,\al_i)=p_{\vec{J}}(i,\al_i),
\end{equation}
and
\begin{equation}
q(\mathbf{i})\contains t^*_\mathbf{i}=p_{\vec\al}(\mathbf{i})=p_{\vec{J}}(\mathbf{i}).
\end{equation}
For   $i<\mathbf{i}$ and $\delta\in\vec{\delta}_q\setminus
J_i$,
 let $q(i,\delta)$ be  an  extension
 of $p_{\vec{J}}(i,\delta)$ in $T_i$ of length $\ell_q$  satisfying
 \begin{equation}
 q(i,\delta)^+(c_q)
 \sim
 p_{\vec{J}}(i,\delta)^+(c^*),
 \end{equation}
 where $c_q$ denotes the coding node in $\bD\re \ell_q$.
Such nodes $q(i,\delta)$ exist by SDAP.
Define
\begin{equation}
q=\{q(\mathbf{i})\}\cup \{\lgl (i,\delta),q(i,\delta)\rgl: i<\mathbf{i},\  \delta\in \vec{\delta}_q\}.
\end{equation}
This $q$ is a condition in $\bP$, and $q\le p_{\vec{J}}$.

Now take an $r\le q$ in  $\bP$ which  decides some $\ell$ in $\dot{L}_G$ for which   $h'(\dot{b}_{\vec\al}\re \ell)=\varepsilon_*$, for all $\vec\al\in\vec{J}$.
Without loss of generality,
 we may assume that $\ell_r=\ell$.
Since
$r$ forces $\dot{b}_{\vec{\al}}\re \ell=X(r,\vec\al)$
for each $\vec\al\in \vec{J}$,
and since the coloring $h'$ is defined in the ground model,
it follows that
$h'(X(r,\vec\al))=\varepsilon_*$ for each $\vec\al\in \vec{J}$.
Let
\begin{equation}
Z_0=\{r(\mathbf{i})\}\cup \{r(i,\al):i<\mathbf{i},\ \al\in J_i\}.
\end{equation}
Recall  that $\ran(q)\sse Y$, and note that $Z_0$ end-extends $\ran(q)$.

Let  $c_r$ denote the coding  node in $\bD$ of length  $\ell_r$.
 For each
$y\in Y\setminus \ran(q)$,
 choose
a  member  $z_y\supset y$  in $\bD\re \ell_r$
so that
\begin{equation}
z_y^+(c_r)\sim y^+(c_q).
\end{equation}
Again, \SDAP\ ensures the existence of  such $z_y$.
Let
$Z$ be the level set
consisting of
the nodes $z_y$ for
 $y\in Y\setminus \ran(q)$,
 the nodes in $Z_0\setminus \{r(\mathbf{i})\}$,
  and the two nodes in $\bD\re \ell_r$ extending $r(\mathbf{i})$.
Let 
\begin{equation} 
U_{n_j}=
U_{n_j-2}\cup Z\cup (Z\re \ell'_r).
\end{equation}
Then  $U_{n_j}$
is a member of $r_{n_j}[U_{n_j-2},\bD]$.

Now that we have constructed $U_{n_j}$,
let $U_{n_{j+1}-2}$ be any member of
$r_{n_{j+1}-2}[U_{n_j},\bD]$.
This completes the inductive construction.
Let $N=\bigcup_{j<\om}U_{n_j}$.
Then $N$ is a member of $[D,\bD]^*$ and
 for each $X\in\Ext_{N}(B)$,  $h'(X)=\varepsilon_*$.
Thus, $N$ satisfies the theorem.
\end{proof}

\begin{rem}
A simple modification of the proof 
yields the same theorem for 
 structures with \LSDAP$^+$.
(See proof of Theorem 5.4 in   \cite{CDP1}.) 
\end{rem}

%%%%%%%%%%%%%%%%%%%%%
%%%%%%%%%%%%%%%%%%%%%
%%%%%%%%%%%%%%%%%%%%%
%%%%%%%%%%%%%%%%%%%%%

\section{Borel sets of  $\mathcal{D}(\bD)$ are completely Ramsey}\label{sec.MainThm}

The main result of this section is  Theorem \ref{thm.best}:
For any enumerated  \Fraisse\  structure $\bK$ satisfying SDAP$^+$,
for each good diagonal coding antichain $\bD$ representing $\bK$,
the  space $\mathcal{D}(\bD)$ of 
 all diagonal antichains $M\sse \bD$ similar to  $\bD$
has the property that all Borel subsets are Ramsey.
The proof generally follows the outline of the Galvin--Prikry Theorem  in \cite{Galvin/Prikry73} with the following  exceptions: 
The proof of  the Nash-Williams-style Theorem \ref{thm.GalvinNW}
uses an asymmetric version of combinatorial forcing as well as  applications of the 
Extended Pigeonhole Principle.
This Principle is also needed
 to show that 
countable unions of completely Ramsey sets are completely Ramsey.
Finally,  working with diagonal coding antichains  requires extra care.

Fix a  \Fraisse\  structure $\bK$ with universe $\mathbb{N}$ satisfying SDAP$^+$.
Fix 
 a good diagonal coding  antichain $\bD$ representing  $\bK$, and let  $\mathcal{D}$ denote $\mathcal{D}(\bD)$.
We hold to the following convention:

\begin{conv}\label{conv.basicmetricopen}
Given  $M\in\mathcal{D}$ and 
  $A\in\mathcal{AD}(M)$,  
when we write 
$[A,M]$,
it is assumed that $\max(A)$ does not contain a splitting  predecessor in $M$.
When  we write 
 $B\in r[A,M]^*$,
 if $\max(B)$ has a splitting node then  it is assumed that that splitting node is not a splitting predecessor in $M$.
\end{conv}

\begin{defn}\label{defn.NWfamily}
A family  $\mathcal{F}\sse\mathcal{AD}$  has the {\em Nash-Williams property} if  for any two distinct members
in $\mathcal{F}$,
neither is an initial segment of the other.
\end{defn}

Nash-Williams families correspond to metrically open sets in $\mathcal{D}$.

\begin{defn}\label{defn.front}
Suppose $M\in\mathcal{D}$ and $B\in\widehat{\mathcal{AD}}(M)$.
A Nash-Williams family $\mathcal{F}\sse r[B,M]^*$ is a {\em front on $[B,M]^*$} if
for each $N\in [B,M]^*$,
there is some $C\in\mathcal{F}$ such that $C\sqsubset N$.
\end{defn}

A front $\mathcal{F}$ on $[B,M]^*$ determines a collection of disjoint (Ellentuck) basic open sets $[C,M]$, $C\in\mathcal{F}$, whose union is exactly $[B,M]^*$.

Recall that for  level sets $X,Y\sse\bD$, we
write $X\sqsubset Y$  exactly when  $X$ and $Y$ have the same cardinality, $\ell_X<\ell_Y$,  and $Y\re \ell_X=X$.
More generally, for $B,F\in\widehat{\mathcal{AD}}$, write 
$B\sqsubseteq F$ 
exactly when  $B= F\rl \ell_B$;
in this case, 
write $B\sqsubset F$ if  also $\ell_B<\ell_F$.
Given $\mathcal{F}\sse \mathcal{AD}$ and
$B\in\widehat{\mathcal{AD}}$, define
\begin{equation}
\mathcal{F}_B=\{F\in\mathcal{F}: B\sqsubset F\}.
\end{equation}
In particular, if $\mathcal{F}\sse r[B,\bD]^*$, then $\mathcal{F}_B=\mathcal{F}$.
If  $\mathcal{F}$ is  a Nash-Williams family, then  $B\in\mathcal{F}$ if and only if $\mathcal{F}_B=\{B\}$.

Given $M\in\mathcal{D}$, let
\begin{equation}
\mathcal{F}|M=\{F\in\mathcal{F}:F\in\mathcal{AD}(M)\}.
\end{equation}
With this notation,  $\mathcal{F}_B|M=\mathcal{F}\cap r[B,M]^*$, for any $B\in\widehat{\mathcal{AD}}$.
For  $F\in\mathcal{AD}$, let
$|F|$ denote the  $k$ for which $F\in\mathcal{AD}_k$.
Given a set $\mathcal{F}\sse \mathcal{AD}$,
let
\begin{equation}
\widetilde{\mathcal{F}}=\{r_k(F):F\in\mathcal{F}\mathrm{\ and\ }k\le |F|\}.
\end{equation}
% and note that  $\widetilde{\mathcal{F}}\sse\mathcal{AD}$.
If $\mathcal{F}$ is a Nash-Williams family, then  $\mathcal{F}$ consists of the $\sqsubseteq$-maximal members of $\widetilde{\mathcal{F}}$.

We now prove an analogue of the Nash-Williams Theorem for our spaces of  good diagonal coding antichains.

\begin{assumption}\label{assumption.important}
Recall that we are under  Convention \ref{conv.basicmetricopen}.
Given 
 $M\in\mathcal{D}$  and 
$A\in \widehat{\mathcal{AD}}(M)$,
 let $d=\depth_M(A)$ and 
$D=r_d(M)$.
Recall that
 $A^+$ denotes the union of $A$  with the set of   immediate extensions in    $\widehat{M}$ of the members of $\max(A)$.
Let $B$ be a member of $\widehat{\mathcal{AD}}(M)$ such that
 $A\sqsubset B\sse A^+$.
In what follows we consider simultaneously the two pairs of cases for triples $(A,B,k)$, from Section \ref{sec.APP}:
\vskip.1in

\begin{enumerate}
\item[]
\begin{enumerate}
\item[\bf Case (a).]
$\max(r_{k+1}(\bD))$ has a splitting node.
\end{enumerate}
\end{enumerate}

\begin{enumerate}
\item[]
\begin{enumerate}
\item[\bf Case (b).]
$\max(r_{k+1}(\bD))$
has a coding node.
%{\color{red}  I think we might be able to get rid of this requirement:   In this case, we also require that $m_0\ge \depth_D(A)+2$, where $m_0>\depth_D(A)$ is least such that $r_{m_0}(D)$ contains a member of $r_{k+1}[B,D]^*$. }
\end{enumerate}
\end{enumerate}

\begin{enumerate}
\item[]
\begin{enumerate}
\item[\bf Case (i).]
$k\ge 1$,
$A\in \mathcal{AD}_k$,
and
 $B=A^+$.
 \end{enumerate}
\end{enumerate}

\begin{enumerate}
\item[]
\begin{enumerate}
\item[\bf Case (ii).]
$k\ge 0$, $A$ has at least one node,
$\max(A)\sqsubset \max(B)$, and 
$A = C\rl \ell$ for some $C\in\mathcal{AD}_{k+1}$  and $\ell<\ell_C$ such that 
$r_{k}(C)\sqsubseteq A$ and $B\sqsubseteq C$.
\end{enumerate}
\end{enumerate}
\end{assumption}

\begin{thm}\label{thm.GalvinNW}
Given $M\in\mathcal{D}$, $(A,B,k)$, $d=\depth_M(A)$, and $D=r_d(M)$ as in  Assumption \ref{assumption.important},
let
$\mathcal{F}\sse r[B,M]^*$ be  a Nash-Williams family.
Then   there is an $N\in [D,M]$ such that
either $\mathcal{F}|N$ is a front on $[B,N]^*$
or else
 $\mathcal{F}|N=\emptyset$.
\end{thm}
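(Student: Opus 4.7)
The plan is to run a Galvin--Prikry/Nash--Williams style combinatorial forcing argument, with the Extended Pigeonhole Principle (Theorem \ref{thm.matrixHL}) as the combinatorial engine, adapted to the setting of diagonal coding antichains. First I will set up accept/reject/decide terminology. For $N\in[D,M]$ and $B'\in\widehat{\mathcal{AD}}(N)$ with $B\sqsubseteq B'$, say $N$ \emph{accepts} $B'$ if every $N''\in[B',N]^*$ has some $C\in\mathcal{F}$ with $C\sqsubset N''$; $N$ \emph{rejects} $B'$ if no $N'\in[D,N]$ extending $B'$ accepts $B'$; and $N$ \emph{decides} $B'$ if $N$ accepts or rejects $B'$. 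I expect to verify the usual monotonicity properties --- acceptance and rejection are each preserved under thinning --- so that if two thinnings of $N$ each decide $B'$, they do so in the same direction. Directly from the definitions, $\mathcal{F}|N$ being a front on $[B,N]^*$ is equivalent to $N$ accepting $B$, and $\mathcal{F}|N=\emptyset$ implies $N$ rejects $B$.

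Next I will apply Theorem \ref{thm.matrixHL} to force decisions. For any $N\in[D,M]$, any $B'\in\widehat{\mathcal{AD}}(N)$ with $B\sqsubseteq B'$, and any triple $(A',B',k')$ fitting Assumption \ref{assump.matrixHL}, the claim is that there exists $N'\in[D,N]$ deciding $B'$. The two-coloring to use is $h\colon r_{k'+1}[B',\bD]^*\to 2$ with $h(C)=1$ when $N$ accepts $C$ and $0$ otherwise. Theorem \ref{thm.matrixHL} will then produce $N'$ homogenizing $h$ on $r_{k'+1}[B',N']^*$. In the former case, every level-$(k'+1)$ extension of $B'$ in $N'$ is accepted, and since every $N''\in[B',N']^*$ lies in one of these sub-cones, $N'$ accepts $B'$. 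In the latter case no such extension is accepted, and a further iteration of the same argument inside $N'$ converts this into the statement that $N'$ rejects $B'$.

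I will then perform a fusion to produce a single $N\in[D,M]$ deciding every $B'\in\widehat{\mathcal{AD}}(N)$ extending $B$. The construction will be a $\sqsubseteq$-increasing committed sequence $D=D_0\sqsubset D_1\sqsubset\cdots$ together with a $\le$-decreasing sequence $M=M_0\ge M_1\ge\cdots$ in $[D,M]$ via standard diagonal bookkeeping: at stage $j$, I apply the previous step to each of the finitely many new $B'$ made available by $D_j$, passing to a thinning $M_{j+1}\le M_j$ (with $D_{j+1}\sqsubseteq M_{j+1}$ fixed) that decides each of them. Setting $N=\bigcup_j D_j$, persistence of decisions under thinning will ensure every relevant $B'\in\widehat{\mathcal{AD}}(N)$ is decided by $N$. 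The final dichotomy is then clean: if $N$ accepts $B$, the $\sqsubset$-minimal accepted $F$ extending $B$ must each lie in $\mathcal{F}$ by minimality and the Nash--Williams property, and together they form a front equal to $\mathcal{F}|N$; if $N$ rejects $B$, one more fusion pushes rejection to every extension of $B$ inside some $N^*\in[D,N]$, forcing $\mathcal{F}|N^*=\emptyset$ since any $F\in\mathcal{F}|N^*$ would be trivially accepted.

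The main obstacle I anticipate is in the fusion of the third step, together with the case split (a)/(b) of Assumption \ref{assump.matrixHL} between splitting and coding levels of $\bD$. Because $\mathcal{D}(\bD)$ fails Todorcevic's symmetric amalgamation axiom \textbf{A.3}(2), each stage of the fusion must invoke the pigeonhole in the appropriate shape for the next level and simultaneously maintain the goodness conditions (Definition \ref{defn.CDABetter}) so that the limit antichain lies in $\mathcal{D}(\bD)$ rather than merely being a diagonal subtree. Verifying that $N$ remains a good diagonal coding antichain, with all splitting predecessors aligned and all relevant $1$-types realized, is where most of the technical work will live.
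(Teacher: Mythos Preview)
Your overall architecture---combinatorial forcing with accept/reject, fusion, Extended Pigeonhole---matches the paper's, but the order of operations differs in a way that leaves a real gap in your step~2. In the paper, deciding a given $a\in r[B,N]^*$ uses no pigeonhole: by unwinding the definition of (wide) rejection, either some $P\in[\depth_N(a),N]$ rejects $a$, or every such $P$ has a further thinning accepting $a$ (Lemma~\ref{lem.A}). A first fusion then produces an $N$ deciding every $a\in r[B,N]^*$ (Lemma~\ref{lem.decides}). Only now is Theorem~\ref{thm.matrixHL} applied, once, to homogenize the accept/reject coloring on $r_{k+1}[B,N]^*$. In the reject case a \emph{second} fusion, invoking Theorem~\ref{thm.matrixHL} at each stage together with ``$N$ accepts $a$ iff $N$ accepts every $b\in r_{|a|+1}[a,N]$'' (Fact~\ref{fact.2}(b)), propagates rejection to all of $r[B,Q]^*$ and forces $\mathcal{F}|Q=\emptyset$.

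Your step~2 instead pigeonholes the coloring ``$h(C)=1$ iff $N$ accepts $C$'' to decide $B'$. The accept case is fine, but in the color-$0$ case you only learn that $N$ fails to accept each $C\in r_{k'+1}[B',N']^*$---not that $N'$ or any thinning of it fails to. Non-acceptance does not persist downward, so ``a further iteration of the same argument inside $N'$'' does not yield rejection of $B'$; it just produces more non-acceptance statements about longer $C$'s. The fix is exactly the paper's order: first decide everything by diagonalization alone, then homogenize a coloring whose value already means ``rejected''. Two minor points: the paper's asymmetric ``w-rejects'' (quantifying over $[\depth_N(a),N]$ rather than $[a,N]$) is precisely what replaces the missing axiom~\textbf{A.3}(2), and your fixed-$D$ notion is in the same spirit; and your closing worry about preserving goodness through the fusion is misplaced---the fusion $N=\bigcup_i r_{d+i}(M_i)$ with $M_{i+1}\in[d+i+1,M_i]$ lands in $\mathcal{D}(\bD)$ automatically since $N\sim\bD$, and goodness is a hypothesis on $\bD$ needed for Theorem~\ref{thm.matrixHL}, not a property to maintain for $N$.
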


\begin{proof}
Since $A$ and $B$ are fixed, we shall use lower case $a,b,c,\dots$ to denote members of $\mathcal{AD}$ in this proof.
Recall that  the notation $N\le M$ means that  $N\in [\emptyset, M]$.
We say that 
$N\le M$ {\em accepts} $a\in r[B,N]^*$ if  $\mathcal{F}_a| N$ is a front on $[a,N]$.
We say that 
$N$ {\em widely-rejects} (w-rejects) $a$ if either
\begin{enumerate}
\item[(a)]
$a\not\in r[B, N]^*$; or
\item[(b)]
$a\in  r[B, N]^*$ and $\forall P\in [\depth_N(a),N]\ \exists Q\in[a,P]\ \forall n(r_n(Q)\not\in\mathcal{F})$.
\end{enumerate}
We say that $N$ {\em decides} $a$ if either $N$ accepts $a$ or else $N$ w-rejects $a$.\footnote{After the author had developed this proof, it was pointed out that a similar asymmetric version of combinatorial forcing was developed by Todorcevic in notes for a graduate course in Ramsey theory.  However, those notes do not directly apply to sets of the form $[B,M]^*$, nor do they include the concluding argument in our proof after Lemma \ref{lem.decides}.}
For $n\in\om$,  let  $[n,N]$ denote $[r_n(N),N]$.

\begin{fact}\label{fact.1}
If $N$ accepts $a$, then so does each $P\le N$ with $a\in \mathcal{AD}(P)$.
If $N$ w-rejects $a$, then 
either $a\not\in r[B, N]^*$ and  every $P
\le N$ also rejects  $a$, or else $a\in r[B,N]^*$ and every  $P\in [\depth_N(a),N]$  w-rejects $a$.
\end{fact}

\begin{proof}
Suppose $N$ accepts $a$ and $P\le N$ with $a\in\mathcal{AD}(P)$.
Since $\mathcal{F}_a|N$ is a front on $[a,N]$, it follows that $\mathcal{F}_a|P$ is a front on $[a,P]$.
Hence $P$ accepts $s$.

Suppose $N$ w-rejects $a$.  If $a\not\in \mathcal{AD}(N)$, then
also for each $P\le N$, 
 $a\not\in \mathcal{AD}(P)$  and 
hence
 $P$ w-rejects $a$.
Otherwise,  $a\in \mathcal{AD}(N)$.
Let $n=\depth_N(a)$ and suppose
$P\in [n,N]$. 
Since  $N$ w-rejects $a$,   for each $Q\in [n,N]$   there is an $R\in [n,Q]$ such that for all $m$, $r_m(R)\not\in \mathcal{F}$.
Note that  $P\in [n,N]$ implies $[n,P]\sse[n,N]$; 
so 
 for each $Q\in [n,P]$ there is an $R\in [a,Q]$ such that  for all $m$, $r_m(Q)\not\in \mathcal{F}$. Therefore, $P$ w-rejects $a$.
\end{proof}

\begin{lem}\label{lem.A}
Given $a\in r[B,N]^*$  and $n=\depth_N(a)$,
either $\exists P\in [n,N]$ which  w-rejects $a$, or else $\forall P\in[n,N]\ \exists Q\in[n,P]$ which accepts $a$.
\end{lem}

\begin{proof}
Suppose there is no $P\in[n,N]$ which w-rejects $a$.
Then $\forall P\in[n,N]$, 
\begin{equation}
\exists Q\in [n,P]\ \forall X\in [a,Q]\ \exists m(r_m(X)\in \mathcal{F}).
\end{equation}
Thus,  for all $P\in [n,N]$ there is a $Q\in [n,P]$ such that $\mathcal{F}_a|Q$ is a front on $[a, Q]$;
that is, $Q$  accepts $a$. 
\end{proof}

\begin{fact}\label{fact.2}
\begin{enumerate}
\item[(a)]
For each $a\in r[B,M]^*$, there is an $N\in[\depth_M(a),M]$ which decides $a$.
\item[(b)]
 If $a\in r[B,M]^*$, then $N\in [B,M]^*$ with $a\in\mathcal{AD}(N)$ accepts $a$ if and only if $N$ accepts each $b\in r_{|a|+1}[a,N]$.
\end{enumerate}
\end{fact}

\begin{proof}
For (a),  let  $n=\depth_M(a)$.
By  Lemma \ref{lem.A}, either there is an $N\in [n,M]$ which w-rejects $a$,
or else there is an $N\in [n,M]$ which accepts $a$.

For (b), given the hypotheses,
$N$ accepts $a$ iff $\mathcal{F}_a|N$ is a front on $[a,N]$ iff for each $b\in r_{|a|+1}[a,N]$, 
$\mathcal{F}_a|N$ is a front on $[b,N]$
iff $N$ accepts each  $b\in r_{|a|+1}[a,N]$.
\end{proof}

Recall that $B\in\widehat{\mathcal{AD}}$, but is  not necessarily a member of $\mathcal{AD}$.
We shall say 
 that $N$ {\em accepts} $B$ if $N$ accepts
$a$ for all $a\in r_{k+1}[B,N]^*$.

\begin{fact}\label{fact.claim.1}
If $N\in[B,M]^*$  accepts 
$B$, then $\mathcal{F}_B|N$ is a front on $[B,N]^*$.
\end{fact}

\begin{proof}
For  each 
 $a\in r_{k+1}[B,N]^*$,   $N$ accepts $a$ implies that 
$\mathcal{F}_a|N$ is a front on $[a,N]$.
Since  $[B,N]^*=\bigcup\{[a,N]:a\in  r_{k+1}[B,N]^*\}$, it follows 
 that   $\mathcal{F}|N=\bigcup\{\mathcal{F}_a|N:a\in r_{k+1}[B,N]^*\}$, which is a front on $[B,N]^*$.
\end{proof}

\begin{lem}\label{lem.decides}
There is an $N\in[d,M]$ which 
decides each $a$ in $r[B,N]^*$.
\end{lem}

\begin{proof}
By finitely many applications of 
Fact \ref{fact.2}, 
we obtain an $M_1\in [d+1,M]$ such that $M_1$ decides each 
$a\in r[B,M]^*$  with 
 $a \le_{\mathrm{fin}}r_{d+1}(M)$.
Given $M_i$,
by finitely many applications of 
Fact \ref{fact.2}, 
we obtain an $M_{i+1}\in [d+i+1,M_i]$ such that $M_{i+1}$ decides each 
$a\in r[B,M_i]^*$  with 
 $a \le_{\mathrm{fin}}r_{d+i+1}(M_i)$.
Let $N=\bigcup_{i=1}^{\infty} r_{d+i}(M_i)$, which is the same as  $\bigcup_{i=1}^{\infty} r_{d+i+1}(M_i)$.
Then $N\in [d,M]$
(in fact, $N\in[d+1,M]$)
 and 
for  $a\in r[B,N]^*$,
$M_i$ decides $a$, where $i$ is the index satisfying
 $a\le_{\mathrm{fin}} r_{d+i}(M_i)$.
Since $N\in [d+i,M_i]$, it follows that $N$ decides $a$ in the same way that $M_i$ does.
Thus, $N$ decides all $a\in r[B,N]^*$.
\end{proof}

Now we finish the proof of the theorem. 
Take $N$ as in Lemma \ref{lem.decides} and define a coloring
 $f:r[B,N]^*\ra 2$ by 
$f(a)=0$ if $N$ accepts $a$ and $f(a)=1$ if $N$ w-rejects $a$.
By the Extended Pigeonhole Principle, Theorem \ref{thm.matrixHL}, there is a $P\in [d,N]$ for which $f$ is monochromatic on $r_{k+1}[B,P]^*$.
Now if $f$ has color $0$ on this set, then
$P$ accepts $B$ and 
 by Fact \ref{fact.claim.1},  $\mathcal{F}|P$ is a front on $[B,P]^*$.

Otherwise, $f$ has color $1$ on 
$r_{k+1}[B,P]^*$  so  $P$ w-rejects each member of $r_{k+1}[B,P]^*$. 
Let $P_0=P$.
Apply Theorem \ref{thm.matrixHL} finitely many (possibly $0$) times, to obtain some $P_{1}\in[d+1,P_0]$ such that 
for each $a \in r[B,P_{1}]^*$ with $a
\sse  r_{d+1}(P_{0})$,
all members of 
$r_{|a|+1}[B,P_{1}]^*$ have the same $f$-color.
Since such an $a$ is necessarily in $r_{k+1}[B,P_0]^*$ and  $P_0$ w-rejects $a$, Fact \ref{fact.2} implies that this $f$-color must be $1$.

For 
 $i\ge 1$, 
 we have the following  the induction hypothesis:
 $P_i\in[d+i,P_{i-1}]$  and 
for 
each   $a\in r[B,P_{i-1}]^*$ with $a\sse  r_{d+i}(P_{i-1})$,
$P_i$ w-rejects all members of $r_{|a|+1}[a,P_i]$.
%Part (i)  follows from the fact that $P$ rejects each $a\in r_{k+1}[B,P]^*$ and $P_i\in  [d,P]$.
Apply Theorem \ref{thm.matrixHL}
finitely many times to obtain a $P_{i+1}\in [d+i+1,P_i]$ such that $f$ is monochromatic on $r_{|a|+1}[a,P_{i+1}]$ for each $a\in r[B,P_i]^*$ with $a\sse  r_{d+i+1}(P_i)$.
Fix an $a\in r[B,P_i]^*$ with $a\sse r_{d+i+1}(P_i)$.
If  $|a|=k+1$ then $P_{i+1}$ w-rejects
 $a$, since $a\in r_{k+1}[B,P]^*$ and $P_{i+1}\in[B,P]^*$.
Suppose now that $|a|>k+1$.
By   the induction hypothesis, $P_i$ w-rejects $a$ since $a\in r_{|b|+1}[b,P_i]$, where 
$b=r_{|a|-1}(a)\sse r_{d+i}(P_{i-1})$.
Now if the  $f$-color on $r_{|a|+1}[a, P_{i+1}]$ is  $0$, then $P_{i+1}$ accepts $a$ by Fact \ref{fact.2}, a contradiction. 
Hence, $f$ has color $1$ on $r_{|a|+1}[a, P_{i+1}]$; 
in particular, $P_{i+1}$ w-rejects each member of $r_{|a|+1}[a, P_{i+1}]$.

Let $Q=\bigcup_{i=1}^{\infty} r_{d+i}(P_i)$.
Then $Q$ w-rejects each member of $r[B,Q]^*$.
By definition of w-rejects,
for each  $a\in  r[B,Q]^*$,
\begin{equation}\label{eq.dagger}
\forall R\in[\depth_Q(a),Q]\ 
\exists X\in[a,R]\ \forall n(r_n(X)\not\in\mathcal{F})
\end{equation}
Suppose toward a contradiction that there is an $a\in\mathcal{F}|Q$.
Then for all $X\in [a,Q]$, $r_{|a|}(X)=a\in\mathcal{F}$.
So  $Q\in [\depth_{Q}(a),Q]$  and   for all $X\in [a,Q]$, $\exists n(r_n(X)\in\mathcal{F})$.
%Hence, 
%$$
%\neg[\forall \in[\depth_N(s),N]\ \exists X\in[s,P]\ \forall n(r_n(X)\not\in\mathcal{B})].
%$$
But this contradicts  (\ref{eq.dagger}).
Thus $\mathcal{F}|Q$ must be empty.
\end{proof}

%%%%%%%%%%%%%%%%%%%%
%%%%%%%%%%%%%%%%%%%%%%

\begin{defn}\label{defn.CR}
Let  $\mathcal{X}$ be a subset of $\mathcal{D}$.
We say that $\mathcal{X}$ is {\em Ramsey}
if for each $M\in\mathcal{D}$ there is  an $N\le M$ such that either $\mathcal{X}\sse [\emptyset,N]$ or else $\mathcal{X}\cap  [\emptyset,N]=\emptyset$.
 $\mathcal{X}$ is said to be  {\em completely Ramsey (CR)}  if for each $C\in\mathcal{AD}$ and each $M\in \mathcal{D}$,
there is an $N\in [C,M]$ such that either $[C,N]\sse \mathcal{X}$ or else $[C,N]\cap\mathcal{X}=\emptyset$.
We shall say that 
  $\mathcal{X}$  is {\em CR$^*$}
if  given $M\in\mathcal{D}$ and 
$(A,B)$ as in 
Assumption
\ref{assumption.important},
there is an $N\in [D,M]$ such that either $[B,N]^*\sse \mathcal{X}$ or else $[B,N]^*\cap\mathcal{X}=\emptyset$.
\end{defn}

\begin{rem}
Since metrically open sets correspond to  Nash-Williams families,
Theorem \ref{thm.GalvinNW}  implies that 
metrically open sets are not only completely Ramsey  but moreover 
CR$^*$, even when relativized below  some $M\in\mathcal{D}$.
\end{rem}

\begin{lem}\label{lem.complements}
Complements of CR$^*$ sets are CR$^*$.
\end{lem}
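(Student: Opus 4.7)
The plan is to observe that the CR$^*$ property is manifestly self-dual under complementation, so the proof will be a direct unpacking of Definition \ref{defn.CR}. Specifically, given $\mathcal{X}\subseteq \mathcal{D}$ that is CR$^*$, fix any $M\in\mathcal{D}$ and any pair $(A,B)$ as in Assumption \ref{assumption.important}, and set $d=\depth_M(A)$, $D=r_d(M)$. Applying the CR$^*$ hypothesis for $\mathcal{X}$ to $M$ and $(A,B)$, we obtain $N\in[D,M]$ with either $[B,N]^*\subseteq \mathcal{X}$ or $[B,N]^*\cap\mathcal{X}=\emptyset$.

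Now read each alternative as a statement about $\mathcal{X}^c=\mathcal{D}\setminus\mathcal{X}$. The first alternative $[B,N]^*\subseteq \mathcal{X}$ is equivalent to $[B,N]^*\cap\mathcal{X}^c=\emptyset$, and the second alternative $[B,N]^*\cap\mathcal{X}=\emptyset$ is equivalent to $[B,N]^*\subseteq \mathcal{X}^c$. Either way, $N$ witnesses the CR$^*$ property for $\mathcal{X}^c$ with respect to the same data $M$ and $(A,B)$. Since the choice of $M$ and $(A,B)$ was arbitrary, $\mathcal{X}^c$ is CR$^*$.

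There is no real obstacle here: the statement is essentially a tautology once one notices that the two clauses in the definition of CR$^*$ swap roles under taking complements, with no need to modify $N$ or invoke any combinatorial machinery such as Theorem \ref{thm.matrixHL} or Theorem \ref{thm.GalvinNW}. The only point that deserves a sentence of attention is that the auxiliary parameters $d$ and $D$ (and the requirement that $(A,B)$ satisfy Assumption \ref{assumption.important}) are features of the data $M,A,B$ and do not depend on $\mathcal{X}$, so the very same $N$ produced by the CR$^*$ property of $\mathcal{X}$ serves as the witness for $\mathcal{X}^c$.
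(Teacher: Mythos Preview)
Your proof is correct and is essentially identical to the paper's own argument: both fix the data $M$, $(A,B,k)$, $d$, $D$ from Assumption \ref{assumption.important}, apply the CR$^*$ hypothesis to obtain $N\in[D,M]$, and then observe that the two alternatives for $\mathcal{X}$ swap to give the two alternatives for $\mathcal{D}\setminus\mathcal{X}$. The paper's proof is slightly terser but there is no substantive difference.
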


\begin{proof}
Suppose $\mathcal{X}\sse \mathcal{D}$ is CR$^*$,
and 
 $M\in\mathcal{D}$, $(A,B,k)$, $d=\depth_M(A)$, and $D=r_d(M)$  are as in  Assumption \ref{assumption.important}.
By definition of CR$^*$, there is an $N\in[D,M]$ such that either $[B,N]^*\sse\mathcal{X}$ or else
 $[B,N]^*\cap\mathcal{X}=\emptyset$.
Letting $\mathcal{Y}=\mathcal{D}\setminus \mathcal{X}$,
we see that either
$[B,N]^*\cap\mathcal{Y}=\emptyset$ or else $[B,N]^*\sse\mathcal{Y}$.
\end{proof}

In the rest of this section,
given $M\in\mathcal{D}$,  endow 
 $[\emptyset,M]$ with the
 subspace topology inherited from  $\mathcal{D}$ with the metric topology.
The next two lemmas  build up   to  Lemma \ref{lem.ctblU}, which will show
that countable unions of CR$^*$ sets are CR$^*$.

\begin{lem}\label{lem.GP8}
Suppose $\mathcal{X}\sse\mathcal{D}$ is CR$^*$.
Then for each $M\in\mathcal{D}$ and each 
$C\in\mathcal{AD}(M)$,
there is an $N\in [C,M]$ such that $\mathcal{X}\cap [\emptyset,N]$ is metrically  open   in $[\emptyset,N]$.
\end{lem}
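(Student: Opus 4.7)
The plan is to perform a standard diagonal fusion using the CR$^{*}$ property. The target is an $N \in [C, M]$ such that, for every triple $(A, B, k)$ of Assumption \ref{assumption.important} with $A \sqsubseteq N$, the Ellentuck-open cone $[B, N]^{*}$ is decided by $\mathcal{X}$, meaning $[B, N]^{*} \sse \mathcal{X}$ or $[B, N]^{*} \cap \mathcal{X} = \emptyset$. Granted such an $N$, for each $P \in \mathcal{X} \cap [\emptyset, N]$ one picks a triple compatible with $P$ (for instance, Case (i) with $A = r_k(P)$ and $B = A^{+}$ gives $P \in [B, N]^{*}$); the decision then forces $[B, N]^{*} \sse \mathcal{X}$, and since $[B, N]^{*}$ decomposes as the finite union $\bigcup \{[A', N] : A' \in r_m[B, N]^{*}\}$ of basic metric cones, the particular choice $A' = r_m(P)$ produces a basic metric cone $[A', N] \sse \mathcal{X}$ containing $P$. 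Thus $\mathcal{X} \cap [\emptyset, N]$ is a union of basic metric cones in $[\emptyset, N]$, which is exactly metric openness.

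To construct $N$, I would inductively build $N_n \in [C, M]$ with its initial segment $R_n := r_n(N_n)$ fixed. At stage $n \ge |C|$, enumerate the finitely many triples $(A, B, k)$ of Assumption \ref{assumption.important} with $A \sqsubseteq R_n$ (so $\depth_{N_n}(A) \le n$) and $B$ at a level bounded by the $(n{+}1)$-st critical level of $N_n$; this finiteness follows from the explicit restrictions of Cases (i) and (ii), which pin $B$ either to $A^{+}$ or to a truncation of some $C \in \mathcal{AD}_{k+1}$ extending $A$. For each triple in turn, apply CR$^{*}$ to the current refinement of $N_n$ to obtain a further refinement living in $[R_n, \cdot]$ that decides the corresponding cone; this is possible because CR$^{*}$ can always be applied with $A$ replaced by $R_n$ (for which $\depth_{N_n}(R_n) = n$, so the returned $D$ equals $R_n$) and the resulting refinement then further adjusted to decide smaller-depth cones using the monotonicity $[B, N']^{*} \sse [B, N]^{*}$ for $N' \le N$. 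Composing these finitely many refinements yields $N_{n+1} \in [R_n, N_n]$ in which every enumerated cone is decided, and setting $R_{n+1} := r_{n+1}(N_{n+1})$ closes the induction. The diagonal limit $N := \bigcup_n R_n$ lies in $[C, M]$ and inherits every decision, again by the monotonicity $[B, N]^{*} \sse [B, N_{n+1}]^{*}$ which is immediate from Definition \ref{defnhatAT}.

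The main obstacle will be the enumeration bookkeeping: one must verify that the finitely many triples processed at all stages together cover every shape $B$ that could witness the openness of $\mathcal{X} \cap [\emptyset, N]$, so that every $P \in \mathcal{X} \cap [\emptyset, N]$ does lie in some decided $[B, N]^{*}$. This requires a careful choice of which $(A, B, k)$ to process at each stage, ensuring that the decided cones are rich enough to surround every member of $\mathcal{X} \cap [\emptyset, N]$ by a basic metric cone inside $\mathcal{X}$. Once this enumeration is managed, together with the monotonicity of decisions under refinement, the fusion produces the desired $N$.
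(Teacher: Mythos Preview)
Your approach can be made to work, but it is considerably more elaborate than necessary, and the part you flag as the ``main obstacle'' is exactly where your write-up is vaguest.  The paper avoids the fusion entirely by exploiting a single observation you do not use: $[\emptyset,M]$ is already the union of the \emph{finitely many} cones $[B_j,M]^*$ as $(A_j,B_j)$ ranges over the triples of Assumption~\ref{assumption.important} with $\depth_M(A_j)=\depth_M(C)=:d$.  One then applies CR$^*$ once per $j$, in sequence, each time obtaining a refinement in $[r_d(\,\cdot\,),\,\cdot\,]=[C,\,\cdot\,]$; after $\tilde j$ steps the resulting $N\in[C,M]$ has $\mathcal{X}\cap[\emptyset,N]=\bigcup_{j\in J}[B_j,N]^*$ for the index set $J$ of accepted cones, and each $[B_j,N]^*$ is metrically open in $[\emptyset,N]$.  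No diagonalization, no bookkeeping across levels, and---crucially---no depth mismatch: every application of CR$^*$ is at depth exactly $d$, so the returned $D$ is always $r_d(M')=C$ and the refinement automatically lives in $[C,\,\cdot\,]$.

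By contrast, in your fusion you process at stage $n$ triples with $\depth_{N_n}(A)\le n$, and then CR$^*$ only guarantees a refinement in $[r_{\depth_{N_n}(A)}(N_n),N_n]$, which need not preserve $R_n$.  Your fix (``apply CR$^*$ with $A$ replaced by $R_n$'') is not a legal move: the triple $(A,B,k)$ in CR$^*$ must satisfy Assumption~\ref{assumption.important}, and one cannot simply swap $A$ for $R_n$ while keeping the same $B$.  The honest repair is to process at stage $n$ only those triples with depth \emph{equal} to $n$; this works, but then you are essentially rediscovering (level by level) the single-level covering that the paper uses once and for all.  The genuine fusion argument you sketch is what the paper does in the \emph{next} lemma (Lemma~\ref{lem.GP9}), where countably many CR$^*$ sets force an honest diagonalization; for the present lemma it is overkill.
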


\begin{proof}
Fix $M\in\mathcal{D}$ and  $C\in\mathcal{AD}(M)$.
Notice that  $[\emptyset,M]=\bigcup_{j<\tilde{j}}[B_j,M]^*$,
where 
$\lgl (A_j,B_j):j<\tilde{j}\rgl$ enumerates  all  pairs $(A,B)$ 
with $\depth_M(A)=\depth_M(C)$
satisfying Assumption
\ref{assumption.important}.

Let $M_{-1}=M$.
 Given $M_{j-1}$ for $j<\tilde{j}$,
$\mathcal{X}$ being CR$^*$ implies 
 there is an  $M_j\in [C,M_{j-1}]$ such that either $[B_j,M_j]^*\sse\mathcal{X}$ or else $\mathcal{X}\cap [B_j,M_j]^*=\emptyset$.
Let $N=M_{\tilde{j}-1}$.
Then $N\in [C,M]$  and
for each $j<\tilde{j}$,
$[B_j,N]^*\sse [B_j,M_j]^*$.
Since 
$[\emptyset,N]=\bigcup_{j<\tilde{j}}[B_j,N]^*$,
it follows that 
\begin{equation}
\mathcal{X}\cap [\emptyset,N]=\bigcup_{j<\tilde{j}} (\mathcal{X}\cap [B_j,N]^*).
\end{equation}
For $j<\tilde{j}$,
if $ [B_j,M_j]^*\sse \mathcal{X}$
then
 $\mathcal{X}\cap [B_j,N]^*= [B_j,N]^*$;
and if $\mathcal{X}\cap  [B_j,M_j]^*=\emptyset$
 then
  $\mathcal{X}\cap [B_j,N]^*=\emptyset$.
Thus,
\begin{equation}
\mathcal{X}\cap[\emptyset,N]=\bigcup_{j\in J}[B_j,N]^*,
\end{equation}
where $J=\{j< \tilde{j}: [B_j,M_j]^*\sse \mathcal{X}\}$.
As each $[B_j,N]^*$ is metrically open in the subspace $[\emptyset,N]$,
$\mathcal{X}\cap[\emptyset,N]$  is  also metrically open in the subspace $[0,N]$.
\end{proof}

\begin{lem}\label{lem.GP9}
Suppose $\mathcal{X}_n$, $n\in\mathbb{N}$, are CR$^*$ sets.
Then for each $M\in\mathcal{D}$ and each $C\in\mathcal{AD}(M)$,
there is an $N\in [C,M]$ such that for each $n\in\mathbb{N}$, $\mathcal{X}_n\cap[\emptyset,N]$ is metrically open in
$[\emptyset,N]$.
\end{lem}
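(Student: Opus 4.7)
The plan is to prove Lemma \ref{lem.GP9} by a straightforward fusion (diagonalization) argument, iterating the single-set version Lemma \ref{lem.GP8} once per $n$ while shrinking a common initial segment each time.

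Fix $M\in\mathcal{D}$ and $C\in\mathcal{AD}(M)$, and let $d=\depth_M(C)$. I set $M_{-1}=M$ and construct a $\le$-decreasing sequence $M_{-1}\ge M_0\ge M_1\ge\cdots$ in $\mathcal{D}$ with $M_n\in [C,M_{n-1}]$, as follows. Given $M_{n-1}$, put $C_n=r_{d+n}(M_{n-1})$, which belongs to $\mathcal{AD}_{d+n}(M_{n-1})$ and extends $C$ (so $C\sqsubseteq C_n$). Applying Lemma \ref{lem.GP8} to the CR$^*$ set $\mathcal{X}_n$, to $M_{n-1}$, and to $C_n$, we obtain $M_n\in[C_n,M_{n-1}]$ such that $\mathcal{X}_n\cap[\emptyset,M_n]$ is metrically open in $[\emptyset,M_n]$. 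By construction $r_{d+n}(M_n)=C_n=r_{d+n}(M_{n-1})$, so the initial segments agree and the sequence $\langle r_{d+n}(M_n):n\in\mathbb{N}\rangle$ is $\sqsubseteq$-increasing.

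Define $N$ by fusion,
\begin{equation}
N=\bigcup_{n\in\mathbb{N}}r_{d+n}(M_n).
\end{equation}
I then verify that $N\in\mathcal{D}$ and $N\in [C,M]$: each $r_{d+n}(N)=r_{d+n}(M_n)\in\mathcal{AD}_{d+n}$ belongs to a member of $\mathcal{D}$, and the compatibility of the approximations ensures that $N$ is a diagonal antichain similar to $\bD$; since $r_d(N)=C$ and every node of $N$ lies in $M$, we have $N\in[C,M]$. Moreover, for each fixed $n$, every $r_{d+m}(N)=r_{d+m}(M_m)\sse M_m\le M_n$ for $m\ge n$, so the whole tail of $N$ past stage $n$ lies inside $M_n$; hence $N\le M_n$ and therefore $[\emptyset,N]\sse[\emptyset,M_n]$.

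With this in hand the openness conclusion is immediate: for each $n$,
\begin{equation}
\mathcal{X}_n\cap[\emptyset,N]=\bigl(\mathcal{X}_n\cap[\emptyset,M_n]\bigr)\cap[\emptyset,N],
\end{equation}
and the right-hand side is the intersection of a metrically open subset of $[\emptyset,M_n]$ (by choice of $M_n$) with the subspace $[\emptyset,N]$, hence metrically open in $[\emptyset,N]$.

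The only genuinely delicate point is the fusion step, namely confirming that $N=\bigcup_n r_{d+n}(M_n)$ is a member of $\mathcal{D}(\bD)$: one needs that the limit of a $\sqsubseteq$-increasing sequence of $\mathcal{AD}_k$-approximations coming from structures in $\mathcal{D}$ is itself a diagonal antichain similar to $\bD$. This is built into the set-up of Definition \ref{defn.BS} (the approximation structure $\langle\mathcal{AD}_k,r_k\rangle$ is designed precisely so that such fusions land back in $\mathcal{D}$), so no extra work is required beyond recording that the $r_k$-maps and the similarity type are preserved under the fusion. All other ingredients—monotonicity of $[\emptyset,\cdot]$, the relation $[\emptyset,N]\sse[\emptyset,M_n]$, and preservation of openness under intersection with a subspace—are routine.
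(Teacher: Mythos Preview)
Your proof is correct and follows essentially the same fusion argument as the paper: iterate Lemma \ref{lem.GP8} with growing initial segments $r_{d+n}(M_{n-1})$ and take $N=\bigcup_n r_{d+n}(M_n)$, then use $N\le M_n$ to pull back openness. One small slip: your claims ``$C\sqsubseteq C_n$'' and ``$r_d(N)=C$'' are not literally correct unless $C$ happens to be an initial segment of $M$; what you actually get (and what the paper's proof also records) is $r_d(N)=r_d(M)=D$, so $N\in[D,M]$ rather than $N\in[C,M]$---this is harmless for the downstream use in Lemma \ref{lem.ctblU}, which only needs $N\in[D,M]$.
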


\begin{proof}
Assume the hypotheses and let $d=\depth_M(C)$ and $D=r_d(M)$.
Since  $\mathcal{X}_0$ is CR$^*$,  Lemma \ref{lem.GP8} implies 
there is an $M_0\in [D,M]$ 
and a metrically open set $\mathcal{O}_0\sse\mathcal{D}$ 
satisfying
$\mathcal{X}_0\cap [\emptyset,M_0]=\mathcal{O}_0\cap[\emptyset,M_0]$.
In general, given 
$M_i$,
by  Lemma \ref{lem.GP8} there is some $M_{i+1}\in
[r_{d+i+1}(M_i),M_i]$
and some
 metrically open
 $\mathcal{O}_i\sse\mathcal{D}$
satisfying 
$\mathcal{X}_{i}\cap [\emptyset,M_{i}]=\mathcal{O}_i\cap[\emptyset,M_i]$.
Let $N=\bigcup_{i=0}^{\infty}r_{d+i}(M_i)$.
Then $N$ is a member of $[D,M]$.

Letting $M_{-1}=M$,
note that
 $N\in [r_{d+i}(M_i),M_{i-1}]$ for each $i\in\mathbb{N}$.
It follows that
 for each $i\in\mathbb{N}$,
$\mathcal{X}_{i}\cap [\emptyset,N]=\mathcal{O}_i\cap[\emptyset,N]$.
Hence $\mathcal{X}_{i}\cap [\emptyset,N]$ is metrically open in $[\emptyset,N]$.
\end{proof}

\begin{lem}\label{lem.ctblU}
Countable unions of CR$^*$ sets are CR$^*$.
\end{lem}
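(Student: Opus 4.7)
The strategy is to push everything down to a situation where the countable union becomes a single metrically open set, at which point Theorem \ref{thm.GalvinNW} (together with the observation in the remark that metrically open sets are CR$^*$) finishes the job. Suppose $\mathcal{X} = \bigcup_{n \in \mathbb{N}} \mathcal{X}_n$ with each $\mathcal{X}_n$ being CR$^*$, and fix $M \in \mathcal{D}$ together with $(A,B,k)$, $d = \depth_M(A)$, and $D = r_d(M)$ as in Assumption \ref{assumption.important}.

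The first step is to apply Lemma \ref{lem.GP9} with $C := D$ to obtain some $N_0 \in [D, M]$ such that for every $n \in \mathbb{N}$, the set $\mathcal{X}_n \cap [\emptyset, N_0]$ is metrically open in the subspace $[\emptyset, N_0]$. Taking the countable union,
\[
\mathcal{X} \cap [\emptyset, N_0] \;=\; \bigcup_{n \in \mathbb{N}} \bigl(\mathcal{X}_n \cap [\emptyset, N_0]\bigr)
\]
is itself metrically open in $[\emptyset, N_0]$. Since $[B, N_0]^*$ is metrically open inside $[\emptyset, N_0]$, the set $\mathcal{X} \cap [B, N_0]^*$ can then be written as a union of basic open cones, yielding a Nash-Williams family $\mathcal{F} \subseteq r[B, N_0]^*$ (formed by taking the $\sqsubseteq$-minimal members of $\{F \in r[B, N_0]^* : [F, N_0] \subseteq \mathcal{X}\}$) such that $\mathcal{X} \cap [B, N_0]^* = \bigcup_{F \in \mathcal{F}}[F, N_0]$.

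Second, I would invoke Theorem \ref{thm.GalvinNW} with $N_0$ in place of $M$. The hypotheses transfer verbatim because $N_0 \in [D, M]$ forces $r_d(N_0) = D$ and $\depth_{N_0}(A) = d$, and $\mathcal{F}$ is a Nash-Williams family in $r[B, N_0]^* \subseteq r[B, M]^*$. This supplies some $N \in [D, N_0]$ such that either $\mathcal{F}|N$ is a front on $[B, N]^*$, or else $\mathcal{F}|N = \emptyset$. In the first case, each $P \in [B, N]^*$ satisfies $F \sqsubset P$ for some $F \in \mathcal{F}|N$, so $P \in [F, N_0] \subseteq \mathcal{X}$, giving $[B, N]^* \subseteq \mathcal{X}$. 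In the second case, if some $P \in [B, N]^* \cap \mathcal{X}$ existed, then $P \in \bigcup_{F \in \mathcal{F}}[F, N_0]$ would furnish $F \in \mathcal{F}$ with $F \sqsubset P$ and hence $F \in \mathcal{AD}(N)$, contradicting $\mathcal{F}|N = \emptyset$; so $[B, N]^* \cap \mathcal{X} = \emptyset$. Because $N \in [D, N_0] \subseteq [D, M]$, this establishes CR$^*$ of $\mathcal{X}$.

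The genuine work is already encapsulated in Lemma \ref{lem.GP9}, which packages the fusion making countably many CR$^*$ sets simultaneously relatively metrically open below a common $N_0$; once this is in hand, all that remains is the Nash-Williams dichotomy of Theorem \ref{thm.GalvinNW}. The only delicate thing to verify is bookkeeping: that the Nash-Williams family $\mathcal{F}$ extracted from the relatively open set really lies inside $r[B, N_0]^*$ (and thus respects Convention \ref{conv.basicmetricopen} on splitting predecessors), and that the Assumption \ref{assumption.important} data persists when one passes from $M$ down to $N_0$; both are automatic from the definition of $r[B, N_0]^*$ and from $r_d(N_0) = D$.
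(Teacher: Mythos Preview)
Your proof is correct and follows essentially the same approach as the paper's: apply Lemma~\ref{lem.GP9} (with $C=D$) to pass to some $N_0\in[D,M]$ below which every $\mathcal{X}_n$ is relatively metrically open, take the union, and then invoke the Nash--Williams dichotomy of Theorem~\ref{thm.GalvinNW}. The only cosmetic difference is that you explicitly extract a Nash--Williams family $\mathcal{F}\subseteq r[B,N_0]^*$ and argue the two cases by hand, whereas the paper phrases the same step as ``$\mathcal{X}\cap[\emptyset,M']=\mathcal{O}\cap[\emptyset,M']$ for some metrically open $\mathcal{O}$, and Theorem~\ref{thm.GalvinNW} implies $\mathcal{O}$ is CR$^*$ in $[\emptyset,M']$''; the content is identical.
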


\begin{proof}
Suppose $\mathcal{X}_n$, $n\in\mathbb{N}$,  are  CR$^*$ subsets of $\mathcal{D}$, and
let $\mathcal{X}=\bigcup_{n=0}^{\infty}\mathcal{X}_n$.
Let  $(M,A,B,k)$ be  as in Assumption \ref{assumption.important}, and let $d=\depth_M(A)$ and $D=r_d(M)$.
%We claim that  there is some $N\in [D,M]$ such that $[B,N]^*\sse\mathcal{X}$ or $[B,N]^*\cap\mathcal{X}=\emptyset$.
By Lemma \ref{lem.GP9}, there is
a $M'\in [D,M]$ such that for each $n$,
$\mathcal{X}_n\cap[\emptyset,M']$ is metrically open in   $[\emptyset,M']$.
Thus, $\mathcal{X}\cap[\emptyset,M']$ is metrically  open in $[\emptyset,M']$,
 so $\mathcal{X}\cap[\emptyset,M']=\mathcal{O}\cap [\emptyset,M']$ for some metrically open set $\mathcal{O}\sse \mathcal{D}$.

Theorem \ref{thm.GalvinNW}  implies that  $\mathcal{O}$ is
 CR$^*$ in $[\emptyset,M']$.
%{\color{red}  Is this true? Make exact reason.
%Every metrically open set gives rise to  a Nash-Williams family.}
Hence, 
 there is some $N\in [D,M']$ such that
either $[B,N]^*\sse \mathcal{O}$ or else
$[B,N]^*\cap\mathcal{O}=\emptyset$.
Therefore,
either
\begin{equation}
[B,N]^*  =[B,N]^*\cap[\emptyset,M']\sse \mathcal{O}\cap[\emptyset,M']=\mathcal{X}\cap[\emptyset,M'],
\end{equation}
or else
\begin{align}
[B,N]^*\cap\mathcal{X}&=
[B,N]^*\cap[\emptyset,M']\cap\mathcal{X}\cr
&\sse [B,N]^*\cap[\emptyset,M']\cap\mathcal{O}\cr
&\sse [B,N]^*\cap\mathcal{O}
=\emptyset.
\end{align}
Thus, $\mathcal{X}$ is CR$^*$.
\end{proof}

\begin{thm}\label{thm.best}
Let $\bK$ be an enumerated  \Fraisse\ structure satisfying SDAP$^+$, with finitely many relations of arity at most two.
Let $\bD$ be a good diagonal coding antichain representing $\bK$.
Then the collection of CR$^*$ subsets of $\mathcal{D}(\bD)$ contains all Borel subsets of $\mathcal{D}(\bD)$.
In particular,
Borel subsets of the space $\mathcal{D}(\bD)$  are completely Ramsey, and hence Ramsey.
\end{thm}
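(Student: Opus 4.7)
The plan is to show that the CR$^*$ subsets of $\mathcal{D}(\bD)$ form a $\sigma$-algebra containing every metrically open set; this will automatically contain every Borel set, and since CR$^*$ readily implies the completely Ramsey property, the full conclusion follows.

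The first step is to verify that every metrically open $\mathcal{O}\sse\mathcal{D}$ is CR$^*$. Express $\mathcal{O}=\bigcup_{F\in\mathcal{G}}[F,\bD]$ with $\mathcal{G}\sse\mathcal{AD}$ a Nash-Williams family, obtained by taking $\sqsubseteq$-minimal basic opens. Given $M\in\mathcal{D}$ and $(A,B,k)$ with $d=\depth_M(A)$, $D=r_d(M)$ as in Assumption \ref{assumption.important}: if some $F\in\mathcal{G}$ satisfies $F\sqsubseteq B$, then $[B,M]^*\sse[F,\bD]\sse\mathcal{O}$, so $N=M$ suffices. Otherwise, put $\mathcal{F}=\{F\in\mathcal{G}:B\sqsubset F\}$, a Nash-Williams subfamily contained in $r[B,\bD]^*$, and apply Theorem \ref{thm.GalvinNW} to obtain $N\in[D,M]$ such that either $\mathcal{F}|N$ is a front on $[B,N]^*$---whence $[B,N]^*\sse\mathcal{O}$---or $\mathcal{F}|N=\emptyset$, in which case $[B,N]^*\cap\mathcal{O}=\emptyset$ because any $P\in[B,N]^*\cap[F,\bD]$ with $F\in\mathcal{G}$ forces $F$ to be $\sqsubseteq$-comparable with $B$ in $\widehat{\mathcal{AD}}$, and the case $F\sqsubseteq B$ was explicitly excluded.

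By Lemma \ref{lem.complements} the class of CR$^*$ sets is closed under complements, and by Lemma \ref{lem.ctblU} under countable unions; combined with the first step, the class of CR$^*$ sets is a $\sigma$-algebra containing a basis for the metric topology, hence contains every Borel subset of $\mathcal{D}(\bD)$. For the completely Ramsey conclusion of Definition \ref{defn.CR}, given $C\in\mathcal{AD}_k$ and $M\in\mathcal{D}$ with $C\in\mathcal{AD}(M)$, apply CR$^*$ to the triple $(A,B,k)=(C,C^+,k)$ in Case (i): this yields $N\in[C,M]$ such that $[C^+,N]^*$ is either contained in $\mathcal{X}$ or disjoint from it, and the inclusion $[C,N]\sse[C^+,N]^*$ follows from the no-splitting-predecessor convention together with the structure of good diagonal coding antichains, which ensures the immediate extensions of $\max(C)$ in $\widehat{N}$ agree with those in $\widehat{M}$.

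The main obstacle I anticipate is the empty-front subcase of Step 1, namely verifying that when $\mathcal{F}|N=\emptyset$, no $F\in\mathcal{G}\setminus\mathcal{F}$ can cause $[F,\bD]$ to meet $[B,N]^*$. This hinges on the tree-combinatorial fact that for $P\in[B,N]^*$, the condition $B\sqsubseteq r_m(P)$ forces every initial segment $F\sqsubset P$ with $F\in\mathcal{G}$ to be $\sqsubseteq$-comparable with $B$ in $\widehat{\mathcal{AD}}$, so $F$ is either a $\sqsubseteq$-initial segment of $B$ (excluded by hypothesis) or a $\sqsubseteq$-proper extension of $B$ (forcing $F\in\mathcal{F}$, contradicting $\mathcal{F}|N=\emptyset$).
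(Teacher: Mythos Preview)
Your proposal is correct and follows exactly the same route as the paper: open sets are CR$^*$ via Theorem~\ref{thm.GalvinNW} (the paper records this as the Remark immediately after Definition~\ref{defn.CR}), closure under complements is Lemma~\ref{lem.complements}, closure under countable unions is Lemma~\ref{lem.ctblU}, and hence the CR$^*$ sets form a $\sigma$-algebra containing the Borel sets. The paper's proof is in fact a one-line citation of precisely these three results; you have simply supplied the details the paper leaves implicit. One small slip: when invoking Theorem~\ref{thm.GalvinNW} you should take $\mathcal{F}\sse r[B,M]^*$ rather than $r[B,\bD]^*$, i.e.\ restrict $\mathcal{G}$ to $M$ first, since the hypothesis of that theorem requires the Nash-Williams family to live inside $r[B,M]^*$.
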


\begin{proof}
This follows from  Theorem \ref{thm.GalvinNW} and  Lemmas \ref{lem.complements} and
\ref{lem.ctblU}.
\end{proof}

\begin{rem}
A simple modification of the proof  yields the same result for \LSDAP$^+$ structures. 
\end{rem}

%************************************************************
%************************************************************
%************************************************************
%************************************************************
%************************************************************

\section{Main Theorems}\label{sec.MainTheorem}

This section contains the main theorem that Borel sets in our spaces of subcopies of a given structure $\bK$ are Ramsey, 
conditions under which analogues of the Ellentuck theorem hold, and 
a Nash-Williams-style corollary recovering exact big Ramsey degrees.

%%%%%%%%%%%%%%%%%%%%
%%%%%%%%%%%%%%%%%

\subsection{Borel sets are Ramsey}\label{subsec.Borel}

We now prove the Main Theorem.
Fix  an enumerated \Fraisse\ structure $\bK$ satisfying SDAP$^+$  and a good diagonal coding antichain $\bD\sse \bU(\bK)$ representing a subcopy of $\bK$.
Recall that the universe of $\bK$ is $\mathbb{N}$.
%, and that we usually  use $v_n$ to denote $n$ as the $n$-th vertex of $\bK$.
Each  substructure $\mathbf{M}$ of $\bK$
 is uniquely identified with its universe $\mathrm{M}\sse \mathbb{N}$, which 
in turn,  is uniquely identified with the set of coding nodes $\{c_n:n\in \mathrm{M}\}$.
To avoid any ambiguity,
we will use $T_{\bM}$ (rather than $M$) to denote 
 the subtree  of $\bD$ induced by the set of  coding nodes $\{c_n:n\in \mathrm{M}\}$.
Define 
\begin{equation}
 \mathcal{B}(\bD)=
\{\mathrm{M}\in [\mathbb{N}]^{\mathbb{N}}:
T_{\bM}\in \mathcal{D}(\bD)\}.
\end{equation}
That is, 
$\mathrm{M}\sse\mathbb{N}$ is a member of 
$\mathcal{B}(\bD)$ if and only if 
$\{c_n:n\in \mathrm{M}\}\sse\bD$
and 
the tree induced by $\{c_n:n\in \mathrm{M}\}$
is similar to the tree induced by $\bD$.
Note that $\mathcal{B}(\bD)$
is a subspace of the Baire space.

Let $\bfD$ denote the substructure $\bK\re\bD$,
and let $\lgl d_n:n\in\mathbb{N}\rgl$ be the increasing
enumeration of  the universe $\mathrm{D}$ 
of $\bfD$.
Notice
 that $\lgl c_{d_n}:n\in\mathbf{N}\rgl$
enumerates the coding nodes in $\bD$.
Define 
\begin{equation}
\bK(\bfD)
=\{\bM\le \bfD:
\mathrm{M}
\in  \mathcal{B}(\bD)\}.
\end{equation}
That is, 
$\bK(\bfD)$  is   the  subspace of 
$ {\bK\choose\bK}$ consisting of all substructures $\bM$ of $\bfD$ with  universe $\mathrm{M}\in  \mathcal{B}(\bD)$.
Notice that 
$\bK(\bfD)$ is identified with a  subspace  of 
the Baire space via its identification with $\mathcal{B}(\bD)$.
For $\bM\in \bK(\bfD)$,  we will let $\bK(\bM)$ denote the {\em cube} of all substructures of $\bM$ in $\bK(\bfD)$.

For  $\bM\in \bK(\bfD)$, 
let $\lgl m_i:i\in\mathbb{N}\rgl$ be the increasing enumeration of $\mathrm{M}$.
Then  
increasing bijection 
 $m_i\mapsto d_i$ 
induces 
an isomorphism from $\bM$ to $\bfD$,
and $c_{m_i}\mapsto c_{d_i}$ induces a similarity map from 
 $T_{\bM}$ to  $\bD$.
Given  $n\in \mathbb{N}$,
define $\bM_n=\bM\re \{m_i:i<n\}$.
Let
\begin{equation}\label{eq.r_nRado}
\AKD
=\{\bM_n:\bM\in\bK(\bfD)\mathrm{\ and\ } n\in\mathbb{N}\}.
\end{equation}
For $\bfA\in \AKD$ and $\bM\in\bK(\bfD)$,
write $\bfA \sqsubset \bM$ if and only if $\bfA=\bM_n$ for some $n$.
Define
\begin{equation}
[\bfA,\bM]=\{\bN\in\bK(\bfD): \bfA\sqsubset \bN\}.
\end{equation}
These are the basic open sets for the Ellentuck topology on $\bK(\bfD)$ 
 corresponding to the basic Ellentuck open sets $[\mathrm{A},\mathrm{M}]$ in the Baire space, where $\mathrm{A}$ and $\mathrm{M}$ are the universes of $\bfA$ and $\bM$, respectively.
The basic open sets for the metric topology on 
$\bK(\bfD)$  are those of the form $[\bfA,\bfD]$, where $\bfA\in \AKD$.

Let $\theta:\bK(\bfD)\ra \mathcal{D}(\bD)$ denote the map  which sends each
$\bM\in \bK(\bfD)$
to the tree $T_{\bM}$ in 
$\mathcal{D}(\bD)$.
This map is certainly a bijection. 
We will  show that $\theta$ is in fact a homeomorphism
between these two spaces with their metric topologies.

For $n\in\mathbb{N}$,
let $k_n$ denote the least integer such that $c^{\bD}_{n-1}\in r_{k_n}(\bD)$.
Since each $T\in \mathcal{D}(\bD)$ is similar to $\bD$,
it follows that $k_n$ is the least integer such that the $(n-1)$-st coding node of $T$ is in $r_{k_n}(T)$.
In particular,  $k_n$ is least such that 
$\bfD_n=\bK\re r_{k_n}(\bD)$.
For the following lemma, 
recall that since $\bD$ is a {\em good} diagonal coding antichain,
there is some $n_{\bD}$ such that for each $n\ge n_{\bD}$,
there is a one-to-one correspondence between the nodes in  
$\max(r_{k_n}(\bD))^+$
and the
 $1$-types over $\bfD_n$.

\begin{lem}\label{lem.thetacts}
Suppose  $\bM\in\bK(\bfD)$ and 
$\bfA=\bM_n$, where $n\ge n_{\bD}$. 
Then $\theta([\bfA,\bfM])=[r_{k_n}(T_{\bM}),T_{\bM}]$.
\end{lem}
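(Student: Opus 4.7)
The plan is to prove set equality in both directions, leveraging the bijection $\theta$ together with the fact that both $T_{\bN}$ and $T_{\bM}$ are diagonal coding subtrees of $\bD$, so their initial structure is determined by their initial coding nodes via meet-closure and truncation taken inside the fixed ambient tree $\bD$. First, I will unpack the two basic open sets: $[\bfA,\bM]$ consists of those $\bN\le\bM$ in $\bK(\bfD)$ with $\bN_n=\bfA$, while $[r_{k_n}(T_{\bM}),T_{\bM}]$ consists of those $T\in\mathcal{D}(\bD)$ with $T\le T_{\bM}$ and $r_{k_n}(T)=r_{k_n}(T_{\bM})$.

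The key observation is that, since $\bfA=\bM_n$, the first $n$ coding nodes of $T_{\bM}$ are exactly $c_{d_0},\dots,c_{d_{n-1}}$, where $d_0<\dots<d_{n-1}$ enumerates the universe of $\bfA$. By similarity of $T_{\bM}$ with $\bD$ and the definition of $k_n$, the $(k_n{-}1)$-st critical node of $T_{\bM}$ is the $(n{-}1)$-st coding node of $T_{\bM}$, namely $c_{d_{n-1}}$. Hence $r_{k_n}(T_{\bM})$ is the set of nodes in $T_{\bM}$ of length at most $|c_{d_{n-1}}|$. Because $T_{\bM}$ is diagonal and embedded in $\bD$, this initial segment is uniquely determined by the meet-closure of $\{c_{d_0},\dots,c_{d_{n-1}}\}$ inside $\bD$, together with the truncations at the appropriate critical-node levels.

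For the forward inclusion, fix $\bN\in[\bfA,\bM]$. Then $\bN_n=\bfA$ forces the first $n$ coding nodes of $T_{\bN}$ to coincide with $c_{d_0},\dots,c_{d_{n-1}}$, which are also the first $n$ coding nodes of $T_{\bM}$; by the key observation applied to both trees, $r_{k_n}(T_{\bN})=r_{k_n}(T_{\bM})$. The inclusion $\bN\le\bM$ translates directly into $T_{\bN}\sse T_{\bM}$, so $T_{\bN}\in[r_{k_n}(T_{\bM}),T_{\bM}]$. For the reverse inclusion, take any $T\in[r_{k_n}(T_{\bM}),T_{\bM}]$ and set $\bN=\theta^{-1}(T)$. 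The equality $r_{k_n}(T)=r_{k_n}(T_{\bM})$ forces the first $n$ coding nodes of $T$ to agree with those of $T_{\bM}$, whence $\bN_n=\bM_n=\bfA$; the inclusion $T\le T_{\bM}$ gives $\mathrm{N}\sse\mathrm{M}$, so $\bN\le\bM$, placing $\bN$ in $[\bfA,\bM]$.

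The hypothesis $n\ge n_{\bD}$ is used to ensure, via the goodness of $\bD$, that the bijective correspondence between $\max(r_{k_n}(\bD))^+$ and the $1$-types over $\bfD_n$ is in force at this level; this is what guarantees that every tree $T$ sitting between $r_{k_n}(T_{\bM})$ and $T_{\bM}$ genuinely arises from a substructure $\bN\in\bK(\bfD)$ of the expected form, with no spurious level-$(|c_{d_{n-1}}|{+}1)$ nodes to worry about. I do not expect any real obstacle here: the lemma is essentially a bookkeeping check that the parametrization of subcopies by their universes and the parametrization by their induced diagonal coding antichains produce compatible basic open sets. The only point requiring care is tracking which coding-node indices in $\bD$ correspond to which vertices of $\bfA$ and $\bM$, and verifying the identification of $d^{T_{\bM}}_{k_n-1}$ with $c_{d_{n-1}}$ via similarity.
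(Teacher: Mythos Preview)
Your overall structure—proving both inclusions via the bijection $\theta$—matches the paper's, and your reverse inclusion is fine. However, there is a genuine gap in the forward inclusion. Your ``key observation'' asserts that $r_{k_n}(T_{\bM})$ is determined by the meet-closure of the first $n$ coding nodes $\{c_{d_0},\dots,c_{d_{n-1}}\}$ inside $\bD$. This is not correct as stated: $r_{k_n}(T_{\bM})$ consists of all nodes of $T_{\bM}$ of length at most $|c_{d_{n-1}}|$, and $\max(r_{k_n}(T_{\bM}))$ in particular contains truncations of the \emph{later} coding nodes of $T_{\bM}$, not just of the first $n$. A priori, two trees $T_{\bN}\sse T_{\bM}$ in $\mathcal{D}(\bD)$ sharing the same first $n$ coding nodes could have different later coding nodes and hence different truncations at this level; diagonality and meet-closure considerations do not rule this out.

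What actually forces $r_{k_n}(T_{\bN})=r_{k_n}(T_{\bM})$ is precisely the goodness hypothesis $n\ge n_{\bD}$, and this is the paper's argument: by condition~(3) of Definition~\ref{defn.CDABetter} (transferred to $T_{\bM}$ via similarity), the nodes of $\max(r_{k_n}(T_{\bM}))^+$ are in bijection with the $1$-types over $\bfA$. Since $\bN\cong\bK$ extends $\bfA$, every such $1$-type is realized in $\bN$, so $\max(r_{k_n}(T_{\bN}))^+$ must also contain one node for each $1$-type. As $T_{\bN}\sse T_{\bM}$, one of these finite sets is contained in the other; equal cardinality yields equality, whence $r_{k_n}(T_{\bN})=r_{k_n}(T_{\bM})$. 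You do invoke the $1$-types bijection in your final paragraph, but you misattribute its role to the reverse inclusion—where it is not needed, since $\theta$ is already a bijection $\bK(\bfD)\to\mathcal{D}(\bD)$—rather than to the forward inclusion, where it does the real work.
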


\begin{proof}
Since $n\ge n_{\bD}$,
there is a one-to-one correspondence between the nodes in $\max(r_{k_n}(T_{\bM}))^+$ and the $1$-types over $\bfA$.
For 
 $\bN\in [\bfA,\bM]$,
$\bN$ extends $\bfA$  to some isomorphic subcopy  of $\bM$, and 
 $T_{\bN}$ is a subtree of  $T_{\bM}$.
In order for $\bN$ to be isomorphic to $\bM$,
each $1$-type over $\bfA$ must be represented by a node in
$\max(r_{k_n}(T_{\bN}))^+$.
The only way this is possible is if 
 $r_{k_n}(T_{\bN})
=r_{k_n}(T_{\bM})$.
Thus, letting $A=r_{k_n}(T_{\bM})$,
\begin{align}
\theta([\bfA,\bfM])&=
\{T_{\bN}:\bN\in[\bfA,\bM]\}\cr
&=\{T_{\bN}:\bN\in[\bfA,\bM]\mathrm{\ and\ }r_{k_n}(T_{\bN})=r_{k_n}(T_{\bM})\}\cr
&=\{T_{\bN}:A\sqsubset T_{\bN}\mathrm{\ and\ }T_{\bN}\le T_{\bM}\}\cr
&=[A,T_{\bM}].
\end{align}
\end{proof}

Thus, $\theta$ takes the   basic Ellentuck open set $[\bM_n,\bM]$ to the basic Ellentuck  open set $[r_{k_n}(T_{\bM}),T_{\bM}]$ whenever $n\ge n_{\bD}$.
Furthermore, 
$\theta$ is a homeomorphism from $\bK(\bfD)$ with its metric topology  to $\mathcal{D}(\bD)$ with its metric topology, as follows from the next lemma.

\begin{lem}\label{lem.thetametricopenpres}
The map $\theta$ takes each basic metrically open set  in $\bK(\bfD)$ to a metrically open set in $\mathcal{B}(\bD)$, and $\theta^{-1}$ takes each basic metrically open set in 
$\mathcal{B}(\bD)$
to a metrically open set in  $\bK(\bfD)$.
\end{lem}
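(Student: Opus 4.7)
The plan is to prove both directions by decomposing basic metric opens on one side into unions of basic metric opens on the other, using Lemma \ref{lem.thetacts} as the main computational tool. The forward direction is reasonably direct; the backward direction requires an additional \Fraisse\ amalgamation argument inside $\bfD$.

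For the forward direction, fix a basic open $[\bfA,\bfD]$ in $\bK(\bfD)$, and let $n$ be the integer such that $\bfA \in \AKD_n$. It suffices to treat the case $n \ge n_{\bD}$, since for smaller $n$ one decomposes $[\bfA,\bfD]$ as the union of the $[\bfA',\bfD]$ over all $\bfA' \in \AKD_{n_{\bD}}$ that extend $\bfA$. Writing $[\bfA,\bfD] = \bigcup_{\bM \in [\bfA,\bfD]} [\bfA,\bM]$ and applying Lemma \ref{lem.thetacts} to each term yields
\[
\theta([\bfA,\bfD]) \;=\; \bigcup_{\bM \in [\bfA,\bfD]} [r_{k_n}(T_{\bM}),T_{\bM}] \;=\; \bigcup_{\bM \in [\bfA,\bfD]} [r_{k_n}(T_{\bM}),\bD],
\]
where the second equality holds because any $T$ having $r_{k_n}(T_{\bM})$ as an initial segment has as its first $n$ coding nodes precisely the coding nodes of $\bfA$, forcing $\theta^{-1}(T) \in [\bfA,\bfD]$. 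The right-hand side is manifestly a union of basic metric opens in $\mathcal{D}(\bD)$, so $\theta([\bfA,\bfD])$ is metrically open.

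For the backward direction, fix a basic open $[A,\bD]$ in $\mathcal{D}(\bD)$ and $\bN \in \theta^{-1}([A,\bD])$, so $A = r_j(T_{\bN})$ for some $j$. Choose $n \ge n_{\bD}$ with $k_n \ge j$ and set $\bfA' := \bN_n$; then $A$ is an initial segment of $r_{k_n}(T_{\bN})$. The goal is to show $[\bfA',\bfD] \sse \theta^{-1}([A,\bD])$. Given $\bN' \in [\bfA',\bfD]$, the key step is to prove $r_{k_n}(T_{\bN'}) = r_{k_n}(T_{\bN})$, from which $A \sqsubseteq r_{k_n}(T_{\bN'}) \sqsubset T_{\bN'}$ and hence $\bN' \in \theta^{-1}([A,\bD])$. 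The idea is to amalgamate $\bN$ and $\bN'$ inside $\bfD$: construct $\bM' \in [\bfA',\bfD]$ with $\bN,\bN' \le \bM'$ by a back-and-forth that extends $\mathrm{N} \cup \mathrm{N}'$ using only vertices of $\mathrm{D}$ strictly above $\max(\mathrm{A}')$, chosen to realize each $1$-type required to make the resulting structure isomorphic to $\bK$. Lemma \ref{lem.thetacts} applied twice in $[\bfA',\bM']$ then gives $r_{k_n}(T_{\bN}) = r_{k_n}(T_{\bM'}) = r_{k_n}(T_{\bN'})$.

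The main obstacle will be this back-and-forth amalgamation step: one must verify that above $\max(\mathrm{A}')$ in $\mathrm{D}$ there are enough vertices of $\bfD$ realizing each $1$-type needed to complete $\mathrm{N} \cup \mathrm{N}'$ to a subcopy of $\bK$ whose first $n$ vertices remain those of $\bfA'$. This is exactly where the \SDAPplus\ (or \LSDAP$^+$) structure of $\bK$ is used essentially, via the strong amalgamation and extension properties recalled in Section~\ref{subsec.background}. Once this is in place, $r_{k_n}(T_{\bN'})$ becomes an invariant of $\bN'_n = \bfA'$, so $\theta^{-1}([A,\bD])$ is exhibited as a union of basic metric opens $[\bfA',\bfD]$, completing the proof.
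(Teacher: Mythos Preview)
Your forward direction is fine and matches the paper's argument.

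The backward direction has a genuine gap in the amalgamation step. You want $\bM' \in [\bfA',\bfD]$ (so $T_{\bM'}\sim\bD$ and $\bM'_n=\bfA'$) with $\bN,\bN'\le\bM'$. But if such an $\bM'$ exists, then $\bN\le\bM'$ forces $\max(r_{k_n}(T_{\bN}))\sse\max(r_{k_n}(T_{\bM'}))$: both sets sit at length $|c_{a_{n-1}}|$ (where $a_{n-1}$ is the last vertex of $\bfA'$), and by similarity both have exactly $|\max(r_{k_n}(\bD))|$ elements, hence they are equal; likewise for $\bN'$. So the \emph{existence} of $\bM'$ already presupposes $\max(r_{k_n}(T_{\bN}))=\max(r_{k_n}(T_{\bN'}))$, which is precisely what you are trying to establish. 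Your back-and-forth only produces a subcopy of $\bK$ inside $\bfD$; nothing in \SDAP$^+$ forces its induced tree to be \emph{similar} to $\bD$ while containing both $T_{\bN}$ and $T_{\bN'}$.

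Worse, the equality you need can genuinely fail for this $n$. If $a_{n-1}$ is the $(m+1)$-th vertex of $\bfD$ with $m>n-1$, then by goodness the nodes of $\bD$ at length $|c_{a_{n-1}}|+1$ correspond to $1$-types over $\bfD_{m+1}$, while $\max(r_{k_n}(T_{\bN}))^+$ only needs one node per $1$-type over $\bfA'\subsetneq\bfD_{m+1}$. Distinct $\bN,\bN'$ with $\bN_n=\bN'_n=\bfA'$ can select different representatives, so $r_{k_n}(T_{\bN})\ne r_{k_n}(T_{\bN'})$ and no common $\bM'\in[\bfA',\bfD]$ exists.

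The paper avoids amalgamation entirely by passing to a possibly larger $n'$: since $r_{k_n}(\bD)$ is finite, there is an $n'$ such that every node of $r_{k_n}(\bD)$ lies below some $c^{\bD}_i$ with $i<n'$; by similarity the same $n'$ works uniformly for every $T_{\bM}$, so $r_{k_n}(T_{\bM})$ is determined by $\bM_{n'}$. Then $\theta^{-1}([A,\bD])=\bigcup\{[\bfB,\bfD]:\bfB\in\AKD_{n'}\ \text{and}\ r_{k_n}(T_{\bfB})=A\}$, a union of basic metric opens. Replacing your $n$ by this $n'$ (and dropping the amalgamation) repairs your argument.
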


\begin{proof}
Let $[\bfA,\bfD]$ be a basic open set in the metric topology on $\bK(\bfD)$, and let $n$ be the number of vertices in $\bfA$.
Then 
\begin{align}
\theta([\bfA,\bfD])
&= \bigcup\{[r_{k_n}(T_{\bM}),\bD]:\bfA\sqsubset \bM\}\cr
&= \bigcup\{[B,\bD]:B\in\mathcal{AD}_{k_n}\mathrm{\ and \ } \bfD\re B=\bfA\}
\end{align}
which is a countable union of metrically open sets in $\mathcal{D}(\bD)$.
Conversely, given a basic open set $[A,\bD]$ in the metric topology on $\mathcal{D}(\bD)$,
we may without loss of generality assume that $A\in\mathcal{AD}_{k_n}$ for some $n$.
Let $n'$ denote the least integer such that 
 for each $\bM\in\bK(\bfD)$,
\begin{equation}
r_{k_n}(T_{\bM_{n'}})
=r_{k_n}(T_{\bM}).
\end{equation}
Then 
\begin{align}
\theta^{-1}([A,\bD])
&= \{\bM\in\bK(\bfD): T_{\bM}\in [A,\bD]\}\cr
&=\bigcup\{[\bfB,\bfD]:\bfB\in\AKD_{n'}\mathrm{\ and\ }r_{k_n}(T_{\bfB})=A\}, 
\end{align}
which is a countable union of basic metrically open sets in $\bK(\bfD)$.
\end{proof}

A set $\mathcal{X}\sse\bK(\bfD)$ is {\em  Ramsey}
if for any  $\bM\in\bK(\bfD)$,
there is some $\bN\le \bM$ in $\bK(\bfD)$ such that either
$\bK(\bN)\sse \mathcal{X}$
 or else
$\bK(\bN)\cap \mathcal{X}=\emptyset$.
A  set $\mathcal{X}\sse\bK(\bfD)$ is {\em completely Ramsey}
if for any $\bfA\in\AKD$ and $\bM\in\bK(\bfD)$,
there is some $\bN\in [\bfA,\bM]$ such that either
$[\bfA,\bN]\sse \mathcal{X}$
 or else
$[\bfA,\bN]\cap \mathcal{X}=\emptyset$.

\begin{thm}\label{thm.main}
Let $\bK$ be an enumerated \Fraisse\ structure satisfying {\SDAP$^+$} (or {\LSDAP$^+$}) with finitely many relations of arity at most two,  let   $\bD$ be a good diagonal coding antichain, and let $\bfD=\bK\re\bD$.
Then every Borel subset  $\mathcal{X}\sse \bK(\bfD)$ is completely 
Ramsey, and hence Ramsey.
\end{thm}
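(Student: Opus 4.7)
The plan is to transfer Theorem~\ref{thm.best} from $\mathcal{D}(\bD)$ to $\bK(\bfD)$ along the bijection $\theta:\bK(\bfD)\to\mathcal{D}(\bD)$. By Lemma~\ref{lem.thetametricopenpres}, $\theta$ is a homeomorphism with respect to the metric topologies, so $\theta[\mathcal{X}]$ is Borel in $\mathcal{D}(\bD)$, and Theorem~\ref{thm.best} yields that $\theta[\mathcal{X}]$ is completely Ramsey there. The task then reduces to moving the homogeneous witness back through $\theta$ and matching up basic Ellentuck opens on the two sides.

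Given $\bfA\in\AKD$ with $n=|\bfA|$ and $\bM\in\bK(\bfD)$, we seek $\bN\in[\bfA,\bM]$ for which $[\bfA,\bN]$ is $\mathcal{X}$-homogeneous. In the principal case $n\ge n_{\bD}$, Lemma~\ref{lem.thetacts} identifies the basic Ellentuck open $[\bfA,\bM]$ of $\bK(\bfD)$ with the basic Ellentuck open $[r_{k_n}(T_{\bM}),T_{\bM}]$ of $\mathcal{D}(\bD)$. Applying complete Ramseyness of $\theta[\mathcal{X}]$ with $A=r_{k_n}(T_{\bM})$ and $M=T_{\bM}$ produces an $S\in[A,M]$ with $[A,S]$ homogeneous for $\theta[\mathcal{X}]$; setting $\bN:=\theta^{-1}(S)\in[\bfA,\bM]$ and invoking Lemma~\ref{lem.thetacts} a second time gives $\theta([\bfA,\bN])=[A,S]$, which delivers the required homogeneity of $[\bfA,\bN]$ for $\mathcal{X}$.

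The residual case $n<n_{\bD}$ is the main technical obstacle, since there $\theta$ no longer sends $[\bfA,\bM]$ to a single basic Ellentuck open of $\mathcal{D}(\bD)$. The strategy is to decompose $[\bfA,\bM]=\bigsqcup_{\bfA'}[\bfA',\bM]$ as the countable disjoint union indexed by those $\bfA'\in\AKD_{n_{\bD}}$ with $\bfA\sqsubset\bfA'$ and $\mathrm{A}'\sse\mathrm{M}$, enumerate the $\bfA'$'s, and run a fusion: at stage $i$, the principal case is applied to $\bfA'_i$ inside the current ambient to refine it to a witness making $[\bfA'_i,\cdot]$ homogeneous. The diagonal limit $\bN\in[\bfA,\bM]$ then has each $[\bfA',\bN]$ homogeneous with some color $\varepsilon_{\bfA'}\in 2$. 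A concluding application of complete Ramseyness of $\theta[\mathcal{X}]$ to the Borel set $\bigcup\{[\bfA',\bN]:\varepsilon_{\bfA'}=0\}$ (viewed through $\theta$) thins $\bN$ once more so that the $\varepsilon_{\bfA'}$'s agree across all surviving $\bfA'$, yielding a single homogeneity color on $[\bfA,\bN]$. The closing assertion that $\mathcal{X}$ is Ramsey is immediate from being completely Ramsey (take $\bfA=\emptyset$).
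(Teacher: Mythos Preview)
For the principal case $n\ge n_{\bD}$, your argument is exactly the paper's: shrink $\bM$ so that $\bfA\sqsubset\bM$, push through $\theta$ via Lemma~\ref{lem.thetacts}, apply Theorem~\ref{thm.best}, and pull back. The paper does not isolate the case $n<n_{\bD}$ at all---it simply invokes Lemma~\ref{lem.thetacts} after the shrinking step without verifying its hypothesis---so in flagging this case you are being more scrupulous than the paper.

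That said, your proposed handling of $n<n_{\bD}$ has a gap in the final thinning. After the fusion you have $\bN\in[\bfA,\bM]$ with each $[\bfA',\bN]$ monochromatic of color $\varepsilon_{\bfA'}$, and you assert that ``a concluding application of complete Ramseyness of $\theta[\mathcal{X}]$'' unifies the surviving colors. But complete Ramseyness in $\mathcal{D}(\bD)$ yields homogeneity only on a basic set of the form $[C,N']$, and identifying $\theta^{-1}([C,N'])$ with $[\bfA,\theta^{-1}(N')]$ requires precisely Lemma~\ref{lem.thetacts}, whose hypothesis $n\ge n_{\bD}$ you still lack: for an arbitrary $\bP\in[\bfA,\theta^{-1}(N')]$ there is no guarantee that $r_{k_n}(T_{\bP})=C$, so $T_{\bP}$ need not lie in $[C,N']$ at all. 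The step is therefore circular---it runs straight into the same obstacle that forced the case split. A genuine repair is possible but is not a single application: after the fusion, $\mathcal{X}\cap[\bfA,\bN]$ is determined by a $2$-coloring of the $C'\in\mathcal{AD}_{k_{n_{\bD}}}(T_{\bN})$ whose first $n$ coding nodes are those of $\bfA$, and homogenizing this over all of $\theta([\bfA,\bN])=\bigcup\{[C,T_{\bN}]:C\in\mathcal{AD}_{k_n}(T_{\bN}),\ C\text{ has the coding nodes of }\bfA\}$ requires a second full fusion (over the infinitely many such $C$), not a one-shot thinning.
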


\begin{proof}
Let $\mathcal{X}$ be a Borel subset of
$\bK(\bfD)$, and suppose $\bfA\in\AKD$ and $\bM\in \bK(\bfD)$.
If $[\bfA,\bM]=\emptyset$ then
we are done, 
so assume that 
$[\bfA,\bM]$ is non-empty.
By shrinking $\bM$ if necessary, we may assume that 
 $\bfA$ is an initial segment of $\bM$.
Let $n$ be the integer such that $\bfA=\bM_n$.
By Lemma \ref{lem.thetacts},
$\theta([\bfA,\bM])=[r_{k_n}(T_{\bM}),T_{\bM}]$.
Let $A$ denote $r_{k_n}(T_{\bM})$.

Let $\mathcal{Y}$ be the $\theta$-image of 
$\mathcal{X}$, noting that
$\mathcal{Y}$ is Borel in $\mathcal{D}(\bD)$ with the metric topology
 by Lemma \ref{lem.thetametricopenpres}.
Apply 
 Theorem \ref{thm.best} 
to obtain an 
$N\in [A,T_{\bM}]$ such that either $[A,N]\sse\mathcal{Y}$ or else
$[A,N]\cap\mathcal{Y}=\emptyset$.
Let $\bN=\bfD\re N$.
Then $T_{\bN}=N$, $A=r_{k_n}(T_{\bN})$,
and $\theta^{-1}([A,N])
=\theta^{-1}([r_{k_n}(T_{\bN}),T_{\bN}])
=[\bfA,\bN]$, by  Lemma \ref{lem.thetacts}.
Thus, either $[\bfA,\bN]\sse\mathcal{X}$
or else 
$[\bfA,\bN]\cap\mathcal{X}=\emptyset$.

Minor modifications of the proofs yield the same result for structures with \LSDAP$^+$.
\end{proof}

%{\color{red}
%And add a theorem showing there is an unavoidable coloring into infinitely many colors. But put in a later section. Use a front of rank $\om$. What about continuum many colors? }

%%%%%%%%%%%%%%%%%%%%
%%%%%%%%%%%%%%%%%%%%%

\subsection{Topological Ramsey spaces of homogeneous structures}\label{subsec.trs}

\begin{thm}\label{thm.SDAPEllentuck}
Let $\bK$ be  any one of the following structures 
with universe $\mathbb{N}$:
The rationals, $\bQ_n$, $\bQ_{\bQ}$, and or any \Fraisse\  structure
satisfying \SDAP$^+$  or \LSDAP$^+$ for which the coding tree of $1$-types  $\bU(\bK)$ 
has the property that on any given level of 
  $\bU(\bK)$, only the coding node splits.
Then the spaces $\mathcal{D}(\bD)$, where $\bD$ is a diagonal coding antichain for $\bK$, are actually topological Ramsey spaces.
\end{thm}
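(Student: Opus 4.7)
The plan is to verify the four axioms \textbf{A.1}--\textbf{A.4} of Todorcevic (Chapter 5 of \cite{TodorcevicBK10}) for the triple $(\mathcal{D}(\bD),\le,r)$, and then invoke the Abstract Ellentuck Theorem. Axioms \textbf{A.1} (Sequencing) and \textbf{A.2} (Finitization) follow directly from the setup in Section \ref{sec.BaireSpaceDCA}: the sequence of restrictions $r_k$, the partial orders $\le$ and $\le_{\mathrm{fin}}$, and the depth function $\depth_M$ have all been defined precisely in the Todorcevic framework, and the requisite sequencing/restriction identities are immediate.

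Axiom \textbf{A.4} (Pigeonhole) is essentially Theorem \ref{thm.matrixHL} specialized to Case (i): given $A=r_k(M)$, one sets $B=A^+$ and $D=A$, so that for any coloring $h:r_{k+1}[A,M]\to 2$ Theorem \ref{thm.matrixHL} produces $N\in[A,M]$ on which $h$ is monochromatic on $r_{k+1}[B,N]^*$. Under the rigidity hypothesis of this theorem, the only nodes that appear in $\max(r_{k+1}(N))^+$ above those in $\max(A)^+$ are the forced extensions of non-splitting nodes plus the single new coding node, so $r_{k+1}[B,N]^*=r_{k+1}[A,N]$ and the standard \textbf{A.4} is recovered.

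The heart of the proof is Axiom \textbf{A.3} (Amalgamation), whose second clause \textbf{A.3}(2) was noted in Section \ref{sec.APP} to fail for typical $\mathcal{D}(\bD)$. Here the rigidity hypothesis---that on every level of $\bU(\bK)$ only the coding node splits---is what restores it. Under this assumption, every splitting node in any $M\in\mathcal{D}(\bD)$ is itself a coding node; there are no splitting predecessors distinct from coding nodes; each level of $\widehat{M}$ contains at most one critical node; and the extension of $\max(r_k(M))^+$ to $\max(r_{k+1}(M))^+$ is determined up to similarity once the next coding node is chosen. Thus Cases (i) and (ii) of Assumption \ref{assumption.important} collapse into a single case, and given $a\sqsubset X$ and $Y\in[\depth_X(a),X]$, one can extend $a$ through $Y$ using coding nodes of $Y$ above $\depth_X(a)$ to obtain the required $Z\in[a,Y]$, verifying \textbf{A.3}(2). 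For the specific structures $\bQ$, $\bQ_n$, and $\bQ_{\bQ}$, the rigidity hypothesis is verified directly from the description of $1$-types: a $1$-type over $\bK_n$ is determined by a position in the dense linear order (subject to the equivalence relation where applicable), and passing from $\bK_n$ to $\bK_{n+1}$ splits only the position in which the new vertex $v_n$ lies, namely the one coded by the coding node.

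Once \textbf{A.1}--\textbf{A.4} are in place, Todorcevic's Abstract Ellentuck Theorem (Theorem 5.4 of \cite{TodorcevicBK10}) immediately gives that $(\mathcal{D}(\bD),\le,r)$ is a topological Ramsey space. The main obstacle is the careful verification of \textbf{A.3}(2): one must confirm that the splitting-predecessor bookkeeping invoked throughout the proof of Theorem \ref{thm.matrixHL} is vacuous under the rigidity hypothesis, so that the amalgamation of two members $Y,Z\le X$ over a common initial segment $a$ proceeds with no obstruction and produces a genuine element of $\mathcal{D}(\bD)$ (not merely of $\widehat{\mathcal{AD}}$).
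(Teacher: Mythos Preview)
Your approach is essentially the same as the paper's: verify Todorcevic's axioms \textbf{A.1}--\textbf{A.4} and invoke the Abstract Ellentuck Theorem, noting that \textbf{A.3}(2) is the only nontrivial one and that it holds precisely because of the rigidity hypothesis. The paper's own proof is even terser than yours---it simply asserts that \textbf{A.3}(2) is ``straightforward to check'' under the hypothesis, that \textbf{A.1}, \textbf{A.2}, \textbf{A.3}(1) are simple, and that \textbf{A.4} is a special case of Theorem \ref{thm.matrixHL}---so your additional detail on why rigidity forces \textbf{A.3}(2) is welcome.

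One point of imprecision worth cleaning up: your claim that ``every splitting node in any $M\in\mathcal{D}(\bD)$ is itself a coding node'' and that ``there are no splitting predecessors distinct from coding nodes'' conflates coding nodes of $M$ with coding nodes of the ambient tree $\bU(\bK)$. In $\tree(M)$, coding nodes are terminal (maximal) and splitting nodes are meets, so they are never the same node of $M$. What the rigidity hypothesis actually gives is that every splitting node of $\tree(M)$ (and in particular every splitting predecessor in $M$) coincides with a coding node \emph{of $\bU(\bK)$}---not of $M$. This is what makes the extension in \textbf{A.3}(2) go through: when you extend $\max(a)$ inside $Y$, the non-splitting nodes of $\max(a)$ have unique extensions in $\widehat{\bU(\bK)}$ and hence in $\widehat{Y}$, so the only freedom is in the choice of the next coding node, and $Y$ supplies those. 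Your argument is correct in substance, but the phrasing should distinguish the two notions of ``coding node.''
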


\begin{proof}
For structures as in the theorem statement, 
 it is straightforward to check that  Todorcevic's Axiom \bf A.3\rm(2)  holds.
(This is the axiom which fails for the Rado graph and  similar structures if one works with  good diagonal antichains.)
It is simple to check that Axioms \bf A.1\rm, \bf A.2\rm,  and  \bf A.3\rm(1) hold, and Axiom \bf A.4 \rm is a special case of Theorem \ref{thm.matrixHL} (these  in fact hold for all structures considered in this paper).
Then by Todorcevic's Abstract Ellentuck Theorem in \cite{TodorcevicBK10},
the spaces $\mathcal{D}(\bD)$  satisfy analogues of Ellentuck's Theorem.
\end{proof}

%************************************************************
%************************************************************
%************************************************************

\subsection{Exact big Ramsey degrees from  infinite-dimensional Ramsey theory}\label{subsec.envelopes}

Let $\bD$ be a good diagonal coding  antichain for $\bK$, and let $M\in\mathcal{D}(\bD)$.
Given a finite antichain of coding nodes
$A\sse M$,
let $\lgl c^A_j:j<n\rgl$ enumerate the coding nodes in $A$
and let $\bfA$ denote the structure $\bK\re A$.
Recall  that we identify $A$ with the tree which it induces, and that $\bfA_j$ denotes $\bfA\re \{c^A_{i}:i<j\}$.
Let $k$ be least such that $A\sse r_k(M)$.
An {\em envelope} 
 $E(A)$ of $A$  in $M$
 is  a minimal set of nodes in $r_{k+1}(M)$
containing $A$ such that 
for each $j<n$, the splitting predecessor of $c^A_j$ in $M$ is in $E(A)$, and 
each  $1$-type over $\bfA$ is represented by exactly  one  maximal node in $E(A)$.

Envelopes can be made canonically as follows:
 First, add  all the splitting predecessors of coding nodes in $A$ and 
extend them $\prec$-leftmost in $M$ to length  $\ell^A_{n-1}+1$;
let $A'$ denote 
 this extension of $A$.
Then proceed by induction on $j<n$:
For each $1$-type $\tau$ over $\bfA_1$ not already represented by a node in  $A'$,
add one node $t$ in $M$ of length $\ell^A_0+1$
such that  $t/\bfA_0\sim \tau$;
let 
 $E_0$ denote the set of these nodes of length  $\ell^A_0+1$.
Whenever  there is a choice of  more than one node $t$, add the $\prec$-leftmost such node.
Given $E_{j-1}$ for   $1\le j<n$,
 for each $1$-type $\tau$ over 
 $\bfA_{j+1}$  which is not represented by any node in 
$A'\re (\ell^A_{j}+1)$,
take the $\prec$-leftmost node $s$ in 
$E_{j-1}\cup A'\re (\ell^A_{j-1}+1)$
such that $s/\bfA_j\sim \tau/\bfA_j$,
and extend $s$ $\prec$-leftmost to a node $t$ in $M$ of length 
$\ell^A_j+1$ such that $t/\bfA_{j+1}\sim\tau$.
Let $E_j$ denote the set of these nodes of
length $\ell^A_j+1$.
Then, let $E(A)=A'\cup \bigcup_{j<n}E_j$.

Notice that for each $M\in\mathcal{D}(\bD)$,
every finite antichain $A$ of coding nodes in $M$ has such an envelope in $M$.
Moreover, 
for any $A,B\sse M$ such that $A\sim B$,
the canonical construction of envelopes produces envelopes $E(A)$ and $E(B)$ such that $E(A)\sim E(B)$.

%I might have to prove this before submitting, but by induction, using the $\prec$ ordering on $\bD$, we can prove this.
%Or at least if $\bD$  is regular in the sense that the placement and splitting of nodes is the same in each interval of $\bD$.

%Below here is a very very canonical way of making envelopes, and we could revert back to this if necessary.

%List the critical nodes of $A$ as $\lgl d^A_i:i<m\rgl$. If there are no unary relations, then by diagonalness of $A$, $d^A_0$ is a splitting node. If there are unaries and $d_0^A$ is a coding node, then $d_0^A$ is terminal. Let $\lgl c^A_j:j<n\rgl$ enumerate  the coding nodes in $A$. (Really these $A$'s are $\tilde{A}$'s.) At $\ell_0^A+1$, for each $1$-type $\sigma$ over $v_0^A$,

%If there is a $t\in A\re(\ell^A_0+1)$ with $t\re (\bK\re \{v_0^A\})=\sigma$, then take the $\prec$ leftmost such $t$ and label it $t_{\sigma}$.

%Take the $\prec$-leftmost node $u\in A\re |d^A_{m_0-1}|$ and extend it leftmost to length of $|c^A_0|$ adding |type$(A\re 0)|-|$ types in there already in $A$|-1 many splitting nodes, say in reverse $\prec$ order. Then in the $\prec$-order of this set of types, add extend these nodes so that $E(\tilde{A})$ on level $|c^{\tilde{A}}_0|$ has exactly one of these new nodes having that missing type.

Now, given a good diagonal coding antichain $\bD$ and a finite antichain $A\sse\bD$ with $n$ coding nodes,
let $E(A)$ be the canonical envelope of $A$ in $\bD$.
Define 
$\bE$ to be a good diagonal coding antichain contained in 
$\bD$ such that 
$\bE\re (\ell^A_{n-1}+1)= E(A)$,
and above $E(A)$, each $1$-type over an initial structure of $\bE$ is represented by exactly one node in $\bE$.

The following theorem of Coulson--Dobrinen--Patel in \cite{CDP2} is  recovered  as a Nash-Williams style corollary 
from 
 the Main Theorem in this paper.

\begin{cor}\label{cor.NW}
Let $\bK$ be an enumerated \Fraisse\ structure satisfying {\SDAP$^+$} (or {\LSDAP$^+$}) with finitely many relations of arity at most two, and let $\bD$ be a good diagonal coding antichain representing a copy of $\bK$.
Let $A\sse \bD$ be a finite diagonal antichain, and let $f$ color all similarity copies of $A$ in $\bD$ into finitely many colors.
Then there is a good diagonal coding antichain $\bE\sse\bD$ representing $\bK$ in which all copes of $A$ have the same color.
\end{cor}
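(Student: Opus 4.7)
My plan is to convert the finite coloring $f$ on similarity copies of $A$ into a clopen partition of the space $\mathcal{D}(\bD)$, apply Theorem \ref{thm.best} finitely many times, and then invoke the canonical envelope construction preceding the corollary to upgrade monochromaticity from a designated canonical copy of $A$ to every similarity copy of $A$.

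First, for each $M\in\mathcal{D}(\bD)$, I would designate a canonical similarity copy $A_M$ of $A$ inside $M$ as follows. Both $\bD$ and $M$ are good diagonal coding antichains representing $\bK$, so the map sending the $n$-th critical node of $\bD$ to the $n$-th critical node of $M$ (in order of increasing length and preserving $\prec$ within each level) extends uniquely to a similarity map $\sigma_M\colon\bD\to M$; set $A_M:=\sigma_M[A]$. Letting $k_A$ be the least integer with $A\sse r_{k_A+1}(\bD)$, the set $A_M$ is determined by $r_{k_A+1}(M)$, so for each $i$ in the (finite) range of $f$ the set
$$\mathcal{X}_i=\{M\in\mathcal{D}(\bD):f(A_M)=i\}$$
is clopen in the metric topology on $\mathcal{D}(\bD)$, hence Borel, and the $\mathcal{X}_i$ partition $\mathcal{D}(\bD)$. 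Iterating Theorem \ref{thm.best} once per color (starting from $\bD$ and shrinking at each step), I obtain $M^{*}\in\mathcal{D}(\bD)$ and a single color $i_0$ such that $[\emptyset,M^{*}]\sse\mathcal{X}_{i_0}$; at some finite stage the dichotomy must land on the ``contained'' side, since otherwise $[\emptyset,M^{*}]$ would be disjoint from every $\mathcal{X}_i$, contradicting that the $\mathcal{X}_i$ cover $\mathcal{D}(\bD)$.

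The crucial step is to show that every similarity copy $A'\sse M^{*}$ of $A$ (not only $A_{M^{*}}$) satisfies $f(A')=i_0$. Given such an $A'$, I would apply the canonical envelope construction from the paragraph preceding the corollary to $A'$ inside $M^{*}$ to obtain $E(A')$, and then extend $E(A')$ to a good diagonal coding antichain $N\sse M^{*}$ whose initial configuration up to the level of the last coding node of $A'$ is exactly $E(A')$, with each $1$-type over an initial structure of $N$ realized by precisely one node above $E(A')$. Since $E(A')\plussim E(A)$ by the canonicality of the envelope construction, $N\sim\bD$; thus $N\in\mathcal{D}(\bD)$ with $N\le M^{*}$, and by uniqueness of the similarity map, $\sigma_N$ must send $E(A)$ to $E(A')$ and hence $A$ to $A'$, i.e.\ $A_N=A'$. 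Because $N\in[\emptyset,M^{*}]\sse\mathcal{X}_{i_0}$, it follows that $f(A')=f(A_N)=i_0$, and setting $\bE:=M^{*}$ completes the proof.

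The main obstacle is the existence of the extension in step three: given $A'\sse M^{*}$ and its envelope $E(A')$, producing a good diagonal coding antichain $N\sse M^{*}$ that begins with $E(A')$ and realizes each required $1$-type exactly once above it. This is precisely what the passage preceding the corollary asserts is possible, and it rests on applying \SDAP$^{+}$ (or \LSDAP$^{+}$) inside the subcopy of $\bK$ represented by $M^{*}$ to fill in the levels above $E(A')$ one at a time. Once this extension is established, the uniqueness of similarities between good diagonal coding antichains delivers $A_N=A'$ automatically, and the Ramsey conclusion falls out.
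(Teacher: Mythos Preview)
There is a genuine gap in your third step: the claim ``$N\sim\bD$'' does not follow from ``$E(A')\plussim E(A)$''. You construct $N$ so that its initial configuration is exactly $E(A')$, but $\bD$ has no reason to begin with anything similar to $E(A)$. The antichain $A$ sits at some arbitrary depth inside $\bD$; there may be many coding nodes and splitting nodes of $\bD$ below the first coding node of $A$, and these are all part of the similarity type of $\bD$ but are absent from $E(A)$. Concretely, take the Rado graph and let $A=\{c^{\bD}_5\}$ be a single coding node. Then $A_N=\{c^N_5\}$ for every $N\in\mathcal{D}(\bD)$, so your method only ever tests the color of the \emph{sixth} coding node of members of $[\emptyset,M^*]$. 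The similarity copy $A'=\{c^{M^*}_0\}$ can never arise as $A_N$ for any $N\le M^*$, because $N\sse M^*$ cannot have five coding nodes strictly below $c^{M^*}_0$. Thus your argument leaves some copies of $A$ in $M^*$ unchecked.

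The paper sidesteps this by passing \emph{first} to the good diagonal coding antichain $\bE\sse\bD$ that literally begins with $E(A)$, and then running the Ramsey argument in $\mathcal{D}(\bE)$ rather than $\mathcal{D}(\bD)$. In $\mathcal{D}(\bE)$, by construction $r_k(\bE)=E(A)$ for the appropriate $k$, so for every $M\in\mathcal{D}(\bE)$ the coding nodes of $r_k(M)$ already form a similarity copy of $A$, and conversely every similarity copy $B$ of $A$ inside the homogeneous $N$ has a canonical envelope $E(B)\in\mathcal{AD}_k(N)$; this is exactly the bijection between copies of $A$ and $k$-th approximations that your argument needs. Your envelope-extension idea is correct in spirit, but it produces members of $\mathcal{D}(\bE)$, not of $\mathcal{D}(\bD)$, so the clopen partition and the Ramsey application must already be carried out in $\mathcal{D}(\bE)$.
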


\begin{proof}
Let $\bE$ be an end-extension of the envelope $E(A)$ in $\bD$ to a good diagonal coding antichain,
and let $f$ color all similarity copies of $A$ in $\bE$ into finitely many colors. 
Let  $k$ be the least  integer  such that $r_k(\bE)$ contains $A$.
Notice that  for each $M\in  \mathcal{D}(\bE)$,
$r_k(M)\sim E(A)$,
 so 
the coding nodes in 
any $C\in\mathcal{AD}_k(\bE)$
 induce a tree similar to $A$;
denote this tree by $C_A$.
Moreover, for each similarity copy $B$ of $A$ in $\bfE$,
the canonical envelope $E(B)$ in $\bE$ is in $\mathcal{AD}_k(\bE)$.
Thus,
$f$ induces a  coloring $g$ on $\mathcal{AD}_k(\bE)$ by $g(C)=f(C_A)$.
This in turn 
induces an open, hence Borel, coloring $h$ on $\mathcal{D}(\bE)$
via
$h(M)=g(r_k(M))$.
By Theorem \ref{thm.main}, there is an $N\in \mathcal{D}(\bE)$ on which $h$ is constant.  
Thus, $f$ is constant on the similarity copies of $A$ in $N$.
\end{proof}

\begin{rem}
As pointed out in the introduction, the fact that the number of similarity types of  diagonal antichains 
yields the {\em exact} big Ramsey degrees 
is a theorem of Coulson--Dobrinen-Patel in \cite{CDP2}.
\end{rem}

%\section{Ellentuck theorems for the rationals and similar structures}\label{sec.Ellentuck}

%************************************************************
%************************************************************
%************************************************************
%************************************************************
%************************************************************

%\section{Concluding remarks and future directions}\label{section.end}

%YAY!
%Thank you God!

%\end{document}

\bibliographystyle{amsplain}
\bibliography{references}

\end{document}